\newcommand{\ccc}{{\mathbf C}}
\newcommand{\nnn}{{\mathbf N}}
\newcommand{\zzz}{{\mathbf Z}}
\newcommand{\hhh}{{\frak{h}}}
\newtheorem{prop}{Proposition}[section]
\newtheorem{lemma}{Lemma}[section]
\newtheorem{cor}{Corollary}[section]
\newtheorem{note}{Note}[section]
\numberwithin{equation}{section}
\begin{document}

\title{
Mock theta functions and characters of N=3 superconformal modules II}

\author{\footnote{12-4 Karato-Rokkoudai, Kita-ku, Kobe 651-1334, Japan, \,\ 
wakimoto@r6.dion.ne.jp, \,\ wakimoto.minoru.314@m.kyushu-u.ac.jp
}{ Minoru Wakimoto}}

\date{\empty}

\maketitle

\begin{center}
Abstract
\end{center}
We obtain an explicit formula for the mock theta function 
$\Phi^{[m,s]}$ in the case when either $m$ or $s$ is a half of an odd integer
by using the coroot lattice of $D(2,1;a)$. 
This enables us, together with the recurrence formula for $\Phi^{[m,s]}$,
to know $\Phi^{[m,s]}$ for all $m$ and $s$.
As its application, we deduce explicit formulas for the character 
and the supercharacter of N=3 modules obtained from the quantum 
Hamiltonian reduction of $\widehat{osp}(3|2)$-modules.

\tableofcontents

\section{Introduction}

In this paper, as continuation from \cite{W2022}, we consider 
characters of N=3 modules $H(\Lambda^{[K(m),m_2]})$ obtained from 
the quantum Hamiltonian reduction of the highest weight 
$\widehat{B}(1,1)$-module $L(\Lambda^{[K(m),m_2]})$ where 
$m$ is a positive integer and $m_2$ is a non-negative integer 
such that $m_2 \leq m$ and 
\begin{equation}
K(m) := -\frac{m+2}{4} \hspace{5mm} \text{and} \hspace{5mm}
\Lambda^{[K(m),m_2]} := K(m) \Lambda_0-\frac{m_2}{2}\alpha_1.
\label{n3-2:eqn:2022-319a}
\end{equation}
Very important functions to describe the characters are the mock theta 
functions $\Phi^{[m,s]}_i$ $(i=1,2)$ and $\Phi^{[m,s]}$, where 
$m \in \frac12 \nnn$ and $s \in \frac12 \zzz$, defined by 
\begin{subequations}
\begin{eqnarray}
\Phi^{[m,s]}_1(\tau, z_1, z_2,t) &:=& e^{-2\pi imt} 
\sum_{j \in \zzz}
\frac{e^{2\pi imj(z_1+z_2)+2\pi isz_1} q^{mj^2+sj}}{1-e^{2\pi iz_1}q^j}
\label{n3-2:eqn:2022-111a}
\\[1mm]
\Phi^{[m,s]}_2(\tau, z_1, z_2,t) &:=& e^{-2\pi imt} 
\sum_{j \in \zzz}
\frac{e^{-2\pi imj(z_1+z_2)-2\pi isz_2} q^{mj^2+sj}}{1-e^{-2\pi iz_2}q^j}
\label{n3-2:eqn:2022-111b}
\\[1mm]
\Phi^{[m,s]}(\tau, z_1, z_2,t) &:=& 
\Phi^{[m,s]}_1(\tau, z_1, z_2,t) -\Phi^{[m,s]}_2(\tau, z_1, z_2,t) 
\label{n3-2:eqn:2022-111c}
\end{eqnarray}
\end{subequations}
where $q:=e^{2\pi i\tau} \,\ (\tau \in \ccc_+)$ and $z_1, z_2, t \in \ccc$. 
In section 2, we prepare some of basic properties of these functions 
which are used in this paper.

The main part of the present paper is section 3, in which we deduce a 
relation between $\Phi^{[m,0]}$ and $\Phi^{[1,0]}$ 
by using the coroot lattice of $D(2,1;a)$.
This enables us to obtain an explicit formula (Corollary 
\ref{n3-2:cor:2022-314a}) for $\Phi^{[m,s]}$ 
$(m \in \frac12 \nnn, \,\ s \in \frac12 \zzz_{\rm odd})$, 
since $\Phi^{[1,0]}$ is a well-known function.
Using this formula and basic properties of $\Phi^{[m,s]}$,
we get a formula (Proposition \ref{n3-2:prop:2022-331a})
for $\Phi^{[m,s]}$ ($m \in \frac12 \nnn_{\rm odd}, \,\ s \in \zzz)$.
By these formulas and the recurrence formula, 
we can know $\Phi^{[m,s]}$ for all $m$ and $s$ 
(Proposition \ref{n3-2:prop:2022-403b}).

In section 4, using these results, we deduce a formula for 
the character and the supercharacter of the N=3 module 
$H(\Lambda^{[K(m),m_2]})$.

\section{Basic properties of $\Phi^{[m,s]}$}

Some of basic properties of  $\Phi^{[m,s]}_i$ and $\Phi^{[m,s]}$ are 
shown in section 2 of \cite{W2022}. In this section we add some more 
which are used in this paper.

\begin{lemma} 
\label{n3-2:lemma:2022-312a}
Let $m \in \frac12 \nnn$, $s \in \frac12 \zzz$ and $j \in \zzz$. Then 
\begin{enumerate}
\item[{\rm 1)}] if $j \geq 0$, 
\begin{enumerate}
\item[{\rm (i)}] \,\ $
\Phi^{[m, s+j]}_1(\tau, z_1,z_2,0) \, = \, \Phi^{[m, s]}_1(\tau, z_1,z_2,0)$
$$
- \, \sum_{k=0}^{j-1}e^{\pi i(s+k)(z_1-z_2)} q^{-\frac{(s+k)^2}{4m}}
\theta_{s+k,m}(\tau, z_1+z_2)
$$
\item[{\rm (ii)}] \,\ $
\Phi^{[m, s+j]}_2(\tau, z_1,z_2,0) \, = \, \Phi^{[m, s]}_2(\tau, z_1,z_2,0)$
$$
- \, \sum_{k=0}^{j-1}e^{\pi i(s+k)(z_1-z_2)} q^{-\frac{(s+k)^2}{4m}}
\theta_{-(s+k),m}(\tau, z_1+z_2)
$$
\item[{\rm (iii)}] \,\ $
\Phi^{[m, s+j]}(\tau, z_1,z_2,0) \, = \, \Phi^{[m, s]}(\tau, z_1,z_2,0)$
$$
- \, \sum_{k=0}^{j-1}e^{\pi i(s+k)(z_1-z_2)} q^{-\frac{(s+k)^2}{4m}}
\big[\theta_{s+k,m}-\theta_{-(s+k),m}\big](\tau, z_1+z_2)
$$
\end{enumerate}
\item[{\rm 2)}] \,\ if $j < 0$, 
\begin{enumerate}
\item[{\rm (i)}] \,\ $
\Phi^{[m, s+j]}_1(\tau, z_1,z_2,0) \, = \, \Phi^{[m, s]}_1(\tau, z_1,z_2,0)$
$$
+ \, \sum_{k=j}^{-1}e^{\pi i(s+k)(z_1-z_2)} q^{-\frac{(s+k)^2}{4m}}
\theta_{s+k,m}(\tau, z_1+z_2)
$$
\item[{\rm (ii)}] \,\ $
\Phi^{[m, s+j]}_2(\tau, z_1,z_2,0) \, = \, \Phi^{[m, s]}_2(\tau, z_1,z_2,0)$
$$
+ \, \sum_{k=j}^{-1}e^{\pi i(s+k)(z_1-z_2)} q^{-\frac{(s+k)^2}{4m}}
\theta_{-(s+k),m}(\tau, z_1+z_2)
$$
\item[{\rm (iii)}] \,\ $
\Phi^{[m, s+j]}(\tau, z_1,z_2,0) \, = \, \Phi^{[m, s]}(\tau, z_1,z_2,0)$
$$
+ \, \sum_{k=j}^{-1}e^{\pi i(s+k)(z_1-z_2)} q^{-\frac{(s+k)^2}{4m}}
\big[\theta_{s+k,m}-\theta_{-(s+k),m}\big](\tau, z_1+z_2)
$$
\end{enumerate}
\end{enumerate}
where 
$$
\theta_{k,m}(\tau,z) := \sum_{j \in \zzz}
e^{2\pi im(j+\frac{k}{2m})z}q^{m(j+\frac{k}{2m})^2}
$$ 
is the Jacobi's theta function.
\end{lemma}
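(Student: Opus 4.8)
The plan is to reduce everything to the single-step shift $s \mapsto s+1$ and then recover arbitrary $j$ by telescoping. Concretely, once we establish for each $i=1,2$ the base identity
$$
\Phi^{[m,s+1]}_i(\tau,z_1,z_2,0) = \Phi^{[m,s]}_i(\tau,z_1,z_2,0) - e^{\pi is(z_1-z_2)}q^{-\frac{s^2}{4m}}\theta_{\varepsilon_i s,m}(\tau,z_1+z_2),
$$
with $\varepsilon_1=+1$ and $\varepsilon_2=-1$, applying it successively with $s$ replaced by $s,s+1,\dots,s+j-1$ and adding the results yields part 1) for $j\ge 0$. Rewriting the same identity as a step $s+1\mapsto s$ and iterating downward yields part 2) for $j<0$, with the sign reversed and the index range $k=j,\dots,-1$. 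In each case part (iii) is just the difference of (i) and (ii) via $\Phi^{[m,s]}=\Phi^{[m,s]}_1-\Phi^{[m,s]}_2$, so no separate work is needed for it.

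For the base identity for $\Phi_1$ I would compare the two series term by term at $t=0$. Passing from $s$ to $s+1$ multiplies the numerator of the $j$-th summand by exactly $e^{2\pi iz_1}q^j$ and leaves the denominator $1-e^{2\pi iz_1}q^j$ unchanged, so the $j$-th summand of the difference is
$$
\frac{e^{2\pi imj(z_1+z_2)+2\pi isz_1}q^{mj^2+sj}}{1-e^{2\pi iz_1}q^j}\bigl(e^{2\pi iz_1}q^j-1\bigr) = -\,e^{2\pi imj(z_1+z_2)+2\pi isz_1}q^{mj^2+sj}.
$$
The decisive point is that the factor $e^{2\pi iz_1}q^j-1$ cancels the pole in the denominator and leaves a plain Gaussian series, turning a mock-theta expression into an honest theta expression.

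It then remains to recognise $-e^{2\pi isz_1}\sum_{j}e^{2\pi imj(z_1+z_2)}q^{mj^2+sj}$ as the claimed term. Completing the square gives $q^{mj^2+sj}=q^{m(j+\frac{s}{2m})^2}q^{-\frac{s^2}{4m}}$ together with $e^{2\pi imj(z_1+z_2)}=e^{2\pi im(j+\frac{s}{2m})(z_1+z_2)}e^{-\pi is(z_1+z_2)}$, so the sum equals $e^{-\pi is(z_1+z_2)}q^{-\frac{s^2}{4m}}\theta_{s,m}(\tau,z_1+z_2)$; combining with the prefactor $e^{2\pi isz_1}e^{-\pi is(z_1+z_2)}=e^{\pi is(z_1-z_2)}$ reproduces exactly the $k=0$ term of 1)(i). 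The computation for $\Phi_2$ is identical except that the opposite sign in $e^{-2\pi imj(z_1+z_2)}$ forces the reflection $\theta_{s,m}(\tau,-(z_1+z_2))=\theta_{-s,m}(\tau,z_1+z_2)$, which is precisely why the index $-(s+k)$ appears in 1)(ii). There is no serious obstacle here: the conceptual heart is the denominator cancellation above, and the only real care needed is the bookkeeping of the exponential prefactors and matching the sign of the theta index in the two cases.
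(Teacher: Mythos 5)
Your proof is correct, but it is not the route the paper takes: the paper disposes of part 1) by simply citing Lemma 2.8 of the predecessor paper \cite{W2022} and then remarks that 2) follows easily from 1), whereas you prove the whole statement from the definitions. Your one-step identity checks out: at $t=0$, passing from $s$ to $s+1$ multiplies the $j$-th numerator of $\Phi^{[m,s]}_1$ by exactly $e^{2\pi iz_1}q^j$, the factor $e^{2\pi iz_1}q^j-1$ cancels the denominator, and completing the square $mj^2+sj=m\big(j+\tfrac{s}{2m}\big)^2-\tfrac{s^2}{4m}$ together with $e^{2\pi isz_1}e^{-\pi is(z_1+z_2)}=e^{\pi is(z_1-z_2)}$ identifies the residual Gaussian sum as $e^{\pi is(z_1-z_2)}q^{-\frac{s^2}{4m}}\theta_{s,m}(\tau,z_1+z_2)$; for $\Phi_2$ the prefactor $e^{-2\pi isz_2}e^{\pi is(z_1+z_2)}$ gives the same exponential, and the reflection $\theta_{s,m}(\tau,-z)=\theta_{-s,m}(\tau,z)$ (substitute $j\mapsto-j$ in the theta series) correctly produces the index $-(s+k)$. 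Telescoping upward gives 1), inverting the step and iterating downward gives 2) with the reversed sign and range $k=j,\dots,-1$, and (iii) is the difference of (i) and (ii) since $\Phi^{[m,s]}=\Phi^{[m,s]}_1-\Phi^{[m,s]}_2$. What your argument buys is self-containedness: a reader can verify the lemma without consulting the companion paper, and the pole-cancellation step makes transparent why the correction terms are genuine theta functions rather than mock ones. What the paper's citation buys is brevity and consistency with the series of papers in which this recurrence was already established; the underlying computation there is presumably the same one you carried out.
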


\begin{proof} 1) is due to Lemma 2.8 in \cite{W2022}. 
2) follows easily from 1).
\end{proof}

Note that, in particular, we have 
\begin{equation}
\Phi^{[m, 0]}_i(\tau, z_1,z_2,t) \, = \, \Phi^{[m, 1]}_i(\tau, z_1,z_2,t)
\hspace{5mm} {\rm for} \quad  m \in \tfrac12 \nnn \,\ 
\text{and} \,\ i \in \{1,2\}.
\label{n3-2:eqn:2022-331d}
\end{equation}

From this lemma, we obtain the following Lemma \ref{n3-2:lemma:2022-320b},
which will be used to prove Proposition \ref{n3-2:prop:2022-320b} 
in section 4.

\begin{lemma}
\label{n3-2:lemma:2022-320b}
For $m \in \nnn$, the following formulas hold:
\begin{enumerate}
\item[{\rm 1)}] \,\ If $s \in \frac12 \nnn_{\rm odd}$, 
\begin{enumerate}
\item[{\rm (i)}] $\Phi^{[\frac{m}{2}, s]}\Big(2\tau, \, 
z+\dfrac{\tau}{2}, \, z-\dfrac{\tau}{2}, \, 0\Big) - 
\Phi^{[\frac{m}{2}, \frac12]}
\Big(2\tau, \, z+\dfrac{\tau}{2}, \, z-\dfrac{\tau}{2}, \, 0\Big)$
$$
= \,\ \sum_{k=0}^{s-\frac32}
q^{-\frac{1}{m}(k+\frac12)^2 + \frac12 (k+\frac12)} \, 
\big[\theta_{k+\frac12, \frac{m}{2}}-\theta_{-(k+\frac12), \frac{m}{2}}\big]
(2\tau, 2z)
$$

\item[{\rm (ii)}] $\Phi^{[\frac{m}{2}, s]}\Big(2\tau, \, 
z+\dfrac{\tau}{2}-\dfrac12, \, z-\dfrac{\tau}{2}+\dfrac12, \, 0\Big) 
- 
\Phi^{[\frac{m}{2}, \frac12]}
\Big(2\tau, \, z+\dfrac{\tau}{2}-\dfrac12, \, 
z-\dfrac{\tau}{2}+\dfrac12, \, 0\Big)$
$$
= \,\ - \, i \sum_{k=0}^{s-\frac32}
(-1)^k \, q^{-\frac{1}{m}(k+\frac12)^2 + \frac12 (k+\frac12)} \, 
\big[\theta_{k+\frac12, \frac{m}{2}}-\theta_{-(k+\frac12), \frac{m}{2}}\big]
(2\tau, 2z)
$$
\end{enumerate}
\item[{\rm 2)}] \,\ If $s \in \nnn$, 
\begin{enumerate}
\item[{\rm (i)}] $\Phi^{[\frac{m}{2}, s]}\Big(2\tau, \, 
z+\dfrac{\tau}{2}, \, z-\dfrac{\tau}{2}, \, 0\Big) - 
\Phi^{[\frac{m}{2}, 0]}
\Big(2\tau, \, z+\dfrac{\tau}{2}, \, z-\dfrac{\tau}{2}, \, 0\Big)$
$$
= \,\ \sum_{k=0}^{s-1}
q^{-\frac{k^2}{m} + \frac{k}{2}} \, 
\big[\theta_{k, \frac{m}{2}}-\theta_{-k, \frac{m}{2}}\big]
(2\tau, 2z)
$$

\item[{\rm (ii)}] $\Phi^{[\frac{m}{2}, s]}\Big(2\tau, \, 
z+\dfrac{\tau}{2}-\dfrac12, \, z-\dfrac{\tau}{2}+\dfrac12, \, 0\Big) 
- 
\Phi^{[\frac{m}{2}, 0]}
\Big(2\tau, \, z+\dfrac{\tau}{2}-\dfrac12, \, 
z-\dfrac{\tau}{2}+\dfrac12, \, 0\Big)$
$$
= \,\ - \, i \sum_{k=0}^{s-1}
(-1)^k \, q^{-\frac{k^2}{m} + \frac{k}{2}} \, 
\big[\theta_{k, \frac{m}{2}}-\theta_{-k, \frac{m}{2}}\big]
(2\tau, 2z)
$$
\end{enumerate}
\end{enumerate}
\end{lemma}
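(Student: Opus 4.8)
The plan is to read all four identities as specializations of the telescoping shift formula of Lemma~\ref{n3-2:lemma:2022-312a}, in its $\Phi^{[m,s]}$ form (part (iii) of that lemma). Concretely, I would apply Lemma~\ref{n3-2:lemma:2022-312a} with $\tau$ replaced by $2\tau$ and $m$ replaced by $\tfrac{m}{2}$, choosing the base index and the shift as follows: for part~1) take base $s_0=\tfrac12$ and shift $j=s-\tfrac12$, and for part~2) take base $s_0=0$ and shift $j=s$. Because $s\in\tfrac12\nnn_{\rm odd}$ in the first case and $s\in\nnn$ in the second, both shifts are nonnegative integers, so part~1) of Lemma~\ref{n3-2:lemma:2022-312a} applies directly and the summation range $0\le k\le j-1$ becomes exactly $0\le k\le s-\tfrac32$ (resp.\ $0\le k\le s-1$), matching the ranges in the statement; in particular the empty sum at $s=\tfrac12$ and the single (vanishing) term at $s=1$ give the correct degenerate cases. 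The differences $\Phi^{[\frac{m}{2},s]}-\Phi^{[\frac{m}{2},\frac12]}$ and $\Phi^{[\frac{m}{2},s]}-\Phi^{[\frac{m}{2},0]}$ are then literally the telescoped right-hand sides, so the only remaining task is to simplify the scalar prefactor and the theta arguments at the two chosen points.

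At the points in question one has $z_1+z_2=2z$ in all four cases, so each factor $\theta_{\pm(s_0+k),\frac{m}{2}}(2\tau,z_1+z_2)$ is already $\theta_{\pm(s_0+k),\frac{m}{2}}(2\tau,2z)$, as required. For the scalar prefactor I would use that the nome attached to the argument $2\tau$ is $q^{2}$ (with $q=e^{2\pi i\tau}$ as in the statement): hence the Gaussian $q^{-(s_0+k)^2/(4\cdot\frac{m}{2})}$ coming from Lemma~\ref{n3-2:lemma:2022-312a} collapses to $q^{-(s_0+k)^2/m}$, i.e.\ $q^{-(k+\frac12)^2/m}$ for part~1) and $q^{-k^2/m}$ for part~2). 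In case~(i), where $z_1-z_2=\tau$, the residual phase is $e^{\pi i(s_0+k)(z_1-z_2)}=e^{\pi i(s_0+k)\tau}=q^{(s_0+k)/2}$, which is precisely the $q^{\frac12(k+\frac12)}$ (resp.\ $q^{k/2}$) factor written in 1)(i) and 2)(i). Collecting these reproduces the two ``(i)'' identities, up to the overall sign dictated by the minus sign in front of the sum in Lemma~\ref{n3-2:lemma:2022-312a}.

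For the translated points in case~(ii) the single new feature is that $z_1-z_2=\tau-1$ rather than $\tau$, so the phase picks up the extra factor $e^{-\pi i(s_0+k)}$; this equals $-i(-1)^k$ when $s_0=\tfrac12$ and $(-1)^k$ when $s_0=0$, which is exactly the source of the alternating sign and of the factor $-i$ distinguishing the ``(ii)'' identities from the ``(i)'' ones. I would also note that in part~2) the $k=0$ summand vanishes identically, since $\theta_{0,\frac{m}{2}}-\theta_{-0,\frac{m}{2}}=0$, so the effective sum begins at $k=1$. There is no conceptual obstacle: the whole statement is the shift formula of Lemma~\ref{n3-2:lemma:2022-312a} evaluated at half-period points, and the only genuine work is the careful normalization of the nome at $2\tau$ together with the bookkeeping of the two half-period phases $e^{\pi i(s_0+k)\tau}$ and $e^{-\pi i(s_0+k)}$. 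Getting these phases (especially the $-i$ produced by the half-integer base $s_0=\tfrac12$) consistent with the stated coefficients is the one place where care is needed.
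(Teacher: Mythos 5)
Your strategy coincides with the paper's own (the paper gives no explicit proof of this lemma, only the remark that it follows from Lemma~\ref{n3-2:lemma:2022-312a}, which is exactly the specialization you set up), and your individual ingredients are computed correctly: the shifts $j=s-\tfrac12$ and $j=s$, the nome $q^{2}$ attached to the first argument $2\tau$ so that the Gaussian becomes $q^{-(s_0+k)^2/m}$, the phase $e^{\pi i(s_0+k)\tau}=q^{(s_0+k)/2}$ in case~(i), and the extra half-period factor $e^{-\pi i(s_0+k)}$ in case~(ii). The genuine gap is the final sign bookkeeping --- precisely the step you yourself flag as ``the one place where care is needed'' --- and it does not close. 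Lemma~\ref{n3-2:lemma:2022-312a} carries a \emph{minus} sign in front of the sum, and this sign is forced by the definitions \eqref{n3-2:eqn:2022-111a}--\eqref{n3-2:eqn:2022-111c}: the one-step difference is $\Phi^{[m,s+1]}_1-\Phi^{[m,s]}_1=-\,e^{\pi i s(z_1-z_2)}q^{-s^2/(4m)}\,\theta_{s,m}(\tau,z_1+z_2)$. Carrying your own computations through with this sign yields, in the four cases, coefficients proportional to $-1$, $+i(-1)^k$, $-1$, and $-(-1)^k$ (each times the common factor $q^{-\frac1m(s_0+k)^2+\frac12(s_0+k)}$), whereas the statement asserts $+1$, $-i(-1)^k$, $+1$, and $-i(-1)^k$. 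Saying the (i) identities are reproduced ``up to the overall sign'' concedes exactly the discrepancy that a proof must resolve, not absorb.

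The failure is sharpest in 2)(ii), where your argument is not merely incomplete but self-contradictory: you correctly compute that for the integer base $s_0=0$ the half-period phase is $e^{-\pi ik}=(-1)^k$, a \emph{real} factor, yet you then assert that this is ``exactly the source \dots of the factor $-i$'' in the (ii) identities. No sign convention in Lemma~\ref{n3-2:lemma:2022-312a} can manufacture an $i$ out of a real phase, so your method can only ever produce 2)(ii) with coefficient $\pm(-1)^k q^{-k^2/m+k/2}$, never $-i(-1)^k q^{-k^2/m+k/2}$. What an honest completion of your computation actually shows is that the printed Lemma~\ref{n3-2:lemma:2022-320b} is inconsistent with the printed Lemma~\ref{n3-2:lemma:2022-312a} (an inconsistency that propagates into the proof of Proposition~\ref{n3-2:prop:2022-320b}, which quotes these formulas verbatim): either the sign in the shift lemma or the coefficients here --- and in any case the spurious $-i$ in 2)(ii) --- must be corrected. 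A complete write-up must either derive the stated coefficients exactly or identify the misprint and state the corrected identity; as written, your proposal does neither.
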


\begin{lemma} 
\label{n3-2:lemma:2022-313b}
Let $m \in \frac12 \nnn$, \, $s \in \frac12 \zzz$ and $p \in \zzz$. 
Then the following formulas hold for $i, j \in \{1,2\}$ such that 
$i \ne j \, :$ 
\begin{enumerate}
\item[{\rm 1)}] $\Phi^{[m,s]}_i(\tau, \, z_1, \, z_2+p\tau, \, t) 
=
e^{-2\pi imp(z_1+z_2)} \, 
\Phi^{[m,s]}_i(\tau, \, z_1-p\tau, \, z_2, \, t)$

\item[{\rm 2)}] $\Phi^{[m,s]}_i(\tau, \, z_1, \, z_2+p\tau, \, t) 
=
e^{-2\pi imp(z_1+z_2)} \, 
\Phi^{[m,s]}_j(\tau, \, -z_2, \, -z_1+p\tau, \, t) $
\end{enumerate}
\end{lemma}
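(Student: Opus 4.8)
The strategy is to work directly from the series definitions \eqref{n3-2:eqn:2022-111a} and \eqref{n3-2:eqn:2022-111b}, since both claimed identities are of the type ``quasi-periodicity / symmetry of the summands,'' and such identities almost always reduce to a shift of the summation index combined with bookkeeping of the exponential prefactors. I would treat parts 1) and 2) in turn, and within each I would do the case $i=1$ (so $j=2$ in part 2)) explicitly and note that $i=2$ follows by the symmetric argument.

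For part 1), start with the left-hand side and substitute $z_2 \mapsto z_2 + p\tau$ into \eqref{n3-2:eqn:2022-111a}. Since $\Phi^{[m,s]}_1$ depends on $z_2$ only through the single factor $e^{2\pi i m j z_2}$ in the numerator, replacing $z_2$ by $z_2+p\tau$ multiplies the $j$-th summand by $e^{2\pi i m j p \tau} = q^{mjp}$. The plan is then to absorb this $q^{mjp}$ by the index shift $j \mapsto j - p$ (equivalently, to recognize $q^{mj^2+mjp}$ as completing the square relative to the shifted lattice), and to check that the resulting expression is exactly $e^{-2\pi i m p (z_1+z_2)}\,\Phi^{[m,s]}_1(\tau, z_1 - p\tau, z_2, t)$ after expanding the right-hand side by the same definition with $z_1$ replaced by $z_1 - p\tau$. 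The only thing to verify carefully is that the denominator $1 - e^{2\pi i z_1} q^j$ transforms consistently under the combined shift, and that the collected prefactor matches $e^{-2\pi i m p(z_1+z_2)}$ without stray $q$-powers; this is a routine but attention-demanding exponent computation.

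For part 2), the natural approach is to reduce it to part 1) together with the reflection symmetry relating $\Phi^{[m,s]}_1$ and $\Phi^{[m,s]}_2$. The key observation is that replacing $(z_1,z_2) \mapsto (-z_2,-z_1)$ interchanges the roles of the two defining series: in \eqref{n3-2:eqn:2022-111b} the factor $1 - e^{-2\pi i z_2}q^j$ becomes $1 - e^{2\pi i z_1}q^j$, matching the denominator of $\Phi^{[m,s]}_1$, and the numerator exponentials convert accordingly (after an index flip $j \mapsto -j$ if needed). So I would first establish the plain reflection identity $\Phi^{[m,s]}_i(\tau,z_1,z_2,t) = \Phi^{[m,s]}_j(\tau,-z_2,-z_1,t)$ for $i\neq j$, and then apply part 1) to shift the $z_2$-argument, obtaining the $p\tau$ shift in the second slot of the reflected function on the right. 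Composing these two steps should yield exactly the stated formula.

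The main obstacle I anticipate is purely the exponent bookkeeping in part 1): one must track the interaction between the Gaussian $q^{mj^2+sj}$, the linear factor $q^{mjp}$ produced by the shift, the index translation $j \mapsto j-p$, and the prefactor $e^{-2\pi i m t}$, and confirm that everything collapses to the single clean factor $e^{-2\pi i m p(z_1+z_2)}$ with no residual $z$- or $\tau$-dependent terms. Part 2) should then be essentially formal once the reflection identity is in hand, and the remaining cases ($i=2$, and swapping the roles in part 2)) follow by the same manipulations with the obvious sign changes.
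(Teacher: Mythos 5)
Your proposal is correct and follows essentially the same route as the paper: part 1) is the identical index-shift computation from the definition \eqref{n3-2:eqn:2022-111a} (shift $j\mapsto j+p$ and collect the prefactor $e^{-2\pi imp(z_1+z_2)}$), and part 2) combines part 1) with the reflection identity $\Phi^{[m,s]}_i(\tau,z_1,z_2,t)=\Phi^{[m,s]}_j(\tau,-z_2,-z_1,t)$, which the paper simply cites as Lemma 2.2 of \cite{W2022} rather than rederiving. The only cosmetic deviations --- you propose to verify that reflection identity directly from the definitions (it is indeed immediate, with no index flip needed) and to handle $i=2$ in part 1) by the symmetric computation instead of via that same reflection --- do not change the argument in any essential way.
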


\begin{proof} 1) First we prove in the case $i=1$. By definition 
\eqref{n3-2:eqn:2022-111a} of $\Phi^{[m,s]}_1$, 
{\allowdisplaybreaks
\begin{eqnarray*}
& & \hspace{-6mm}
\Phi^{[m,s]}_1(\tau, \, z_1, \, z_2+p\tau, \, t) \, = \, 
e^{-2\pi imt} 
\sum_{j \in \zzz}
\frac{e^{2\pi imj(z_1+z_2+p\tau)+2\pi isz_1} q^{mj^2+sj}}{1-e^{2\pi iz_1}q^j}
\\[2mm]
&=&
e^{-2\pi imt} q^{-mp^2}
\sum_{j \in \zzz}
\frac{e^{2\pi imj(z_1+z_2-p\tau)+2\pi is(z_1-p\tau)} q^{m(j+p)^2+s(j+p)}
}{1-e^{2\pi i(z_1-p\tau)}q^{j+p}}
\\[2mm]
&=&
e^{-2\pi imt} e^{-2\pi imp(z_1+z_2)}
\sum_{j \in \zzz}
\frac{e^{2\pi im(j+p)(z_1+z_2-p\tau)+2\pi is(z_1-p\tau)} q^{m(j+p)^2+s(j+p)}
}{1-e^{2\pi i(z_1-p\tau)}q^{j+p}}
\end{eqnarray*}}
Putting $j+p=:j'$, we obtain 
$$
\text{RHS of the above} \, = \, e^{-2\pi imp(z_1+z_2)}
\Phi^{[m,s]}_1(\tau, \, z_1-p\tau, \, z_2, \, t) \, ,
$$
proving 1) for $i=1$. The case $i=2$ follows from 1) and Lemma 2.2 in \cite{W2022}.

\medskip

\noindent
2) \,\ $\Phi^{[m,s]}_i(\tau, \, z_1, \, z_2+a\tau, \, t) 
= 
e^{-2\pi ima(z_1+z_2)} \, 
\underbrace{\Phi^{[m,s]}_i(\tau, \, z_1-a\tau, \, z_2, \, t)}_{
\substack{|| \\[-1mm] {\displaystyle \hspace{-5mm}
\Phi^{[m,s]}_j(\tau, \, -z_2, \, -z_1+a\tau, \, t) 
}}} $

\vspace{-2mm}

\noindent
by Lemma 2.2 in \cite{W2022}, priving 2).
\end{proof}

\begin{lemma} 
\label{n3-2:lemma:2022-324b}
Let $m \in \frac12 \nnn$, $s \in \frac12 \zzz$ and 
$p \in \zzz$. Then 
\begin{enumerate}
\item[{\rm 1)}] \,\ $\Phi^{[m,s]}_i(\tau, z_1, z_2+p\tau, t) 
\, = \, 
e^{-2\pi impz_1} \Phi^{[m,s+mp]}_i(\tau, z_1, z_2, t)$ 
\,\ for $i=1,2$.

\item[{\rm 2)}] \,\ $\Phi^{[m,s]}(\tau, z_1, z_2+p\tau, t) 
\, = \, 
e^{-2\pi impz_1} \Phi^{[m,s+mp]}(\tau, z_1, z_2, t)$ 
\end{enumerate}
\end{lemma}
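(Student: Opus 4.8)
The plan is to prove Lemma \ref{n3-2:lemma:2022-324b} by a direct computation from the series definitions, essentially repeating the index-shift trick already used in the proof of Lemma \ref{n3-2:lemma:2022-313b}, but now tracking the effect of the shift on the parameter $s$ rather than folding it into a translation of $z_1$. First I would establish part 1) for $i=1$. Starting from the definition \eqref{n3-2:eqn:2022-111a},
\begin{equation*}
\Phi^{[m,s]}_1(\tau, z_1, z_2+p\tau, t) \, = \, e^{-2\pi imt}
\sum_{j \in \zzz}
\frac{e^{2\pi imj(z_1+z_2+p\tau)+2\pi isz_1} q^{mj^2+sj}}{1-e^{2\pi iz_1}q^j},
\end{equation*}
I would absorb the extra factor $e^{2\pi imjp\tau} = q^{mjp}$ into the quadratic exponent and complete the square, rewriting $mj^2 + mjp = m(j + \tfrac{p}{2})^2 - \tfrac{mp^2}{4}$, or more directly recognizing that $q^{mj^2 + mjp} = q^{m(j)^2 + (mp)j}$ combines with $q^{sj}$ to give $q^{mj^2 + (s+mp)j}$, which is precisely the exponent appearing in $\Phi^{[m,s+mp]}_1$. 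The remaining bookkeeping is to peel off the $z_1$-dependence: the factor $e^{2\pi isz_1}$ must become $e^{2\pi i(s+mp)z_1}$, so I extract $e^{-2\pi impz_1}$ as a prefactor, matching the claimed identity.

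The key algebraic point is that, unlike in Lemma \ref{n3-2:lemma:2022-313b}, \emph{no} index shift $j \mapsto j+p$ is performed here; the translation in $z_2$ by $p\tau$ simply reweights each summand by $q^{mjp}$, and this reweighting is exactly what converts the parameter $s$ into $s+mp$ in the numerator's $q$-power. The denominator $1-e^{2\pi iz_1}q^j$ is untouched because it depends on neither $z_2$ nor $s$, so comparing term by term with the definition of $\Phi^{[m,s+mp]}_1$ yields the result once the prefactor $e^{-2\pi impz_1}$ is accounted for. For $i=2$ I would argue identically from \eqref{n3-2:eqn:2022-111b}: here the summand carries $e^{-2\pi imj(z_1+z_2)}$, so the translation $z_2 \mapsto z_2 + p\tau$ contributes $e^{-2\pi imjp\tau} = q^{-mjp}$; combined with $q^{sj}$ this gives $q^{mj^2 + (s-mp)j}$. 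One must then check carefully that the sign conventions in $\Phi_2$ (where the theta-like parameter enters with $-2\pi isz_2$ and the denominator involves $e^{-2\pi iz_2}$) still deliver the shift $s \mapsto s+mp$ rather than $s-mp$; I expect this sign tracking to be the one genuinely error-prone step, and would resolve it by writing out the exponent of $q$ explicitly before renaming.

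Finally, part 2) is immediate from part 1) together with the definition \eqref{n3-2:eqn:2022-111c} of $\Phi^{[m,s]}$ as the difference $\Phi^{[m,s]}_1 - \Phi^{[m,s]}_2$: applying 1) to each of the two pieces and noting that the common prefactor $e^{-2\pi impz_1}$ and the common shift $s \mapsto s+mp$ factor through the subtraction gives
\begin{equation*}
\Phi^{[m,s]}(\tau, z_1, z_2+p\tau, t) \, = \, e^{-2\pi impz_1}
\bigl[\Phi^{[m,s+mp]}_1 - \Phi^{[m,s+mp]}_2\bigr](\tau, z_1, z_2, t)
\, = \, e^{-2\pi impz_1}\,\Phi^{[m,s+mp]}(\tau, z_1, z_2, t).
\end{equation*}
The only real obstacle is the sign-consistency check in the $i=2$ case of part 1); everything else is a routine completion-of-the-exponent calculation entirely parallel to the computation already displayed in the proof of Lemma \ref{n3-2:lemma:2022-313b}.
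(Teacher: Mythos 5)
Your part 1) for $i=1$ is exactly the paper's computation: the translation $z_2\mapsto z_2+p\tau$ enters only through $e^{2\pi imj\cdot p\tau}=q^{mjp}$, the denominator $1-e^{2\pi iz_1}q^j$ is untouched, and extracting the prefactor $e^{-2\pi impz_1}$ converts $s$ into $s+mp$ with no index shift; part 2) by linearity is also what the paper does. Where you genuinely diverge is $i=2$: the paper does not redo the series manipulation there, but instead chains three already-established identities --- Lemma \ref{n3-2:lemma:2022-313b}~1) to trade the translation of $z_2$ for a translation of $z_1$, the symmetry $\Phi^{[m,s]}_2(\tau,z_1,z_2,t)=\Phi^{[m,s]}_1(\tau,-z_2,-z_1,t)$ (Lemma 2.2 of \cite{W2022}) to pass to $\Phi_1$, and then the just-proved case $i=1$ --- so that all sign bookkeeping is inherited from statements already verified. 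Your plan replaces that reduction by a second direct computation, which is a legitimate alternative, but it is precisely where your sketch has a problem.

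The difficulty is that your stated ``key algebraic point'' --- that no index shift $j\mapsto j+p$ is performed --- is false for $i=2$, and the $i=2$ argument as you describe it would not close. In \eqref{n3-2:eqn:2022-111b} the variable $z_2$ occurs in three places, not one: in $e^{-2\pi imj(z_1+z_2)}$ (giving the factor $q^{-mjp}$ you record), in $e^{-2\pi isz_2}$ (giving an additional factor $q^{-sp}$ you do not record), and in the denominator, which becomes $1-e^{-2\pi iz_2}q^{j-p}$. Because the denominator has moved, no term-by-term comparison with $\Phi^{[m,s+mp]}_2(\tau,z_1,z_2,t)$ is possible without the substitution $j=k+p$; the reweighting alone produces the $q$-exponent $mj^2+(s-mp)j-sp$, i.e.\ exactly the wrong shift $s\mapsto s-mp$ that you flagged. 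After the substitution the exponent becomes $m(k+p)^2+(s-mp)(k+p)-sp=mk^2+(s+mp)k$, the denominator returns to $1-e^{-2\pi iz_2}q^k$, and the leftover exponential factors recombine as $e^{-2\pi imp(z_1+z_2)}e^{2\pi impz_2}=e^{-2\pi impz_1}$, which is the claimed identity. So your instinct that the sign question is settled ``by writing out the exponent of $q$ explicitly before renaming'' is correct, but the renaming is not an optional bookkeeping device: it is the step that makes the $i=2$ case work, exactly the index shift of Lemma \ref{n3-2:lemma:2022-313b} that you announced you would avoid. With that step made explicit your proof is complete and is a self-contained alternative to the paper's symmetry-based reduction.
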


\begin{proof} 1) \,\ First consider the case $i=1$;
{\allowdisplaybreaks
\begin{eqnarray*}
& &
\Phi^{[m,s]}_1(\tau, z_1, z_2+p\tau,t) 
\, = \, 
e^{-2\pi imt} \sum_{j \in \zzz}
\frac{e^{2\pi imj(z_1+z_2+p\tau)+2\pi isz_1} 
q^{mj^2+sj}}{1-e^{2\pi iz_1}q^j}
\\[2mm]
&=&
e^{-2\pi imt} e^{-2\pi impz_1}
\sum_{j \in \zzz}
\dfrac{e^{2\pi imj(z_1+z_2)+2\pi i(s+mp)z_1} 
q^{mj^2+(s+mp)j}}{1-e^{2\pi iz_1}q^j}
\\[2mm]
&=&
e^{-2\pi impz_1} \, \Phi^{[m,s+mp]}_1(\tau, z_1, z_2,t) 
\end{eqnarray*}}
proving 1) for $i=1$. 

\medskip

Next we compute the case $i=2$ by using Lemma 2.2 in \cite{W2022} 
and Lemma \ref{n3-2:lemma:2022-313b} and the formula $1)_{i=1}$:
{\allowdisplaybreaks
\begin{eqnarray*}
& & \hspace{-10mm}
\Phi^{[m,s]}_2(\tau, z_1, z_2+p\tau,t) 
\, = \, 
e^{-2\pi imp(z_1+z_2)}\Phi^{[m,s]}_2(\tau, z_1-p\tau, z_2,t)
\\[2mm]
&=&
e^{-2\pi imp(z_1+z_2)}
\Phi^{[m,s]}_1(\tau, -z_2, -z_1+p\tau,t)
\\[2mm]
&=&
e^{-2\pi imp(z_1+z_2)}e^{2\pi impz_2}
\Phi^{[m,s+mp]}_1(\tau, -z_2, -z_1,t)
\\[2mm]
&=&
e^{-2\pi impz_1} \, \Phi^{[m,s+mp]}_2(\tau, z_1, z_2,t) 
\end{eqnarray*}}
proving 1) for $i=2$. \,\ 2) follows from 1) immediately.
\end{proof}

\begin{lemma} 
\label{n3-2:lemma:2022-313a}
Let $m \in \frac12 \nnn$, \, $s \in \frac12 \zzz$ and $p \in \zzz$ 
such that $mp \in \zzz$. Then 
\begin{enumerate}
\item[{\rm 1)}] \,\ if $p \geq 0$, 
\begin{enumerate}
\item[{\rm (i)}] \,\ 
$\Phi^{[m,s]}_1(\tau, z_1, z_2+p\tau, 0) \, = \, e^{-2\pi impz_1}
\Phi^{[m,s]}_1(\tau, z_1, z_2, 0) $
$$
- \, e^{-2pi impz_1} \sum_{k=0}^{mp-1}
e^{\pi i(s+k)(z_1-z_2)} q^{-\frac{(s+k)^2}{4m}}
\theta_{s+k,m}(\tau, z_1+z_2)
$$
\item[{\rm (ii)}] \,\ 
$\Phi^{[m,s]}_2(\tau, z_1, z_2+p\tau, 0) \, = \, e^{-2\pi impz_1}
\Phi^{[m,s]}_2(\tau, z_1, z_2, 0) $
$$
- \, e^{-2pi impz_1} \sum_{k=0}^{mp-1}
e^{\pi i(s+k)(z_1-z_2)} q^{-\frac{(s+k)^2}{4m}}
\theta_{-(s+k),m}(\tau, z_1+z_2)
$$
\item[{\rm (iii)}] \,\ 
$\Phi^{[m,s]}(\tau, z_1, z_2+p\tau, 0) \, = \, e^{-2\pi impz_1}
\Phi^{[m,s]}(\tau, z_1, z_2, 0) $
$$
- \, e^{-2pi impz_1} \sum_{k=0}^{mp-1}
e^{\pi i(s+k)(z_1-z_2)} q^{-\frac{(s+k)^2}{4m}}
\big[\theta_{s+k,m}-\theta_{-(s+k),m}\big](\tau, z_1+z_2)
$$
\end{enumerate}
\item[{\rm 2)}] \,\ if $p \leq 0$, 
\begin{enumerate}
\item[{\rm (i)}] \,\ 
$\Phi^{[m,s]}_1(\tau, z_1, z_2+p\tau, 0) \, = \, e^{-2\pi impz_1}
\Phi^{[m,s]}_1(\tau, z_1, z_2, 0) $
$$
+ \, e^{-2pi impz_1} \sum_{k=mp}^{-1}
e^{\pi i(s+k)(z_1-z_2)} q^{-\frac{(s+k)^2}{4m}}
\theta_{s+k,m}(\tau, z_1+z_2)
$$
\item[{\rm (ii)}] \,\ 
$\Phi^{[m,s]}_2(\tau, z_1, z_2+p\tau, 0) \, = \, e^{-2\pi impz_1}
\Phi^{[m,s]}_2(\tau, z_1, z_2, 0) $
$$
+ \, e^{-2pi impz_1} \sum_{k=mp}^{-1}
e^{\pi i(s+k)(z_1-z_2)} q^{-\frac{(s+k)^2}{4m}}
\theta_{-(s+k),m}(\tau, z_1+z_2)
$$
\item[{\rm (iii)}] \,\ 
$\Phi^{[m,s]}(\tau, z_1, z_2+p\tau, 0) \, = \, e^{-2\pi impz_1}
\Phi^{[m,s]}(\tau, z_1, z_2, 0) $
$$
+ \, e^{-2pi impz_1} \sum_{k=mp}^{-1}
e^{\pi i(s+k)(z_1-z_2)} q^{-\frac{(s+k)^2}{4m}}
\big[\theta_{s+k,m}-\theta_{-(s+k),m}\big](\tau, z_1+z_2)
$$
\end{enumerate}
\end{enumerate}
\end{lemma}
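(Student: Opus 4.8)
The plan is to obtain this lemma by composing the two preceding results, Lemma \ref{n3-2:lemma:2022-324b} and Lemma \ref{n3-2:lemma:2022-312a}, since between them they already do all the work. Lemma \ref{n3-2:lemma:2022-324b} converts the quasi-period shift $z_2 \mapsto z_2 + p\tau$ into a shift of the lower index together with an overall exponential factor, while Lemma \ref{n3-2:lemma:2022-312a} expresses a shifted-index function $\Phi^{[m,s+j]}_i$ in terms of $\Phi^{[m,s]}_i$ plus an explicit theta-function correction. The hypothesis $mp \in \zzz$ is precisely what allows these two to be chained, since the intermediate index produced by the first lemma must be an admissible integer shift for the second.

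First I would apply Lemma \ref{n3-2:lemma:2022-324b} to write
\[
\Phi^{[m,s]}_i(\tau, z_1, z_2+p\tau, 0) = e^{-2\pi impz_1}\, \Phi^{[m,s+mp]}_i(\tau, z_1, z_2, 0).
\]
Since $m>0$, the sign of $p$ agrees with the sign of $mp$, and by hypothesis $mp \in \zzz$, so I may set $j := mp$ in Lemma \ref{n3-2:lemma:2022-312a}. For $p \geq 0$ (hence $j \geq 0$), part 1)(i) of that lemma gives
\[
\Phi^{[m,s+mp]}_1(\tau, z_1, z_2, 0) = \Phi^{[m,s]}_1(\tau, z_1, z_2, 0) - \sum_{k=0}^{mp-1} e^{\pi i(s+k)(z_1-z_2)} q^{-\frac{(s+k)^2}{4m}} \theta_{s+k,m}(\tau, z_1+z_2),
\]
and substituting this into the previous identity, then distributing the factor $e^{-2\pi impz_1}$ across both terms, yields formula 1)(i). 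Formula 1)(ii) follows in exactly the same way from part 1)(ii) of Lemma \ref{n3-2:lemma:2022-312a} (carrying $\theta_{-(s+k),m}$ instead), and 1)(iii) is then obtained simply by subtracting (ii) from (i), using $\Phi^{[m,s]} = \Phi^{[m,s]}_1 - \Phi^{[m,s]}_2$.

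For $p \leq 0$ I would run the identical argument but invoke part 2) of Lemma \ref{n3-2:lemma:2022-312a}, which supplies the summation $\sum_{k=mp}^{-1}$ with the opposite overall sign; this produces the $+$ sign and the shifted index range appearing in part 2) of the statement. Since every step is a direct substitution, I do not anticipate any genuine obstacle: the entire content is the observation that Lemmas \ref{n3-2:lemma:2022-324b} and \ref{n3-2:lemma:2022-312a} are complementary. The only point requiring care is bookkeeping, namely confirming that the overall factor $e^{-2\pi impz_1}$ multiplies the \emph{whole} correction sum and not merely its leading term, and that the summation limits $0 \le k \le mp-1$ (respectively $mp \le k \le -1$) are transcribed correctly from the choice $j = mp$ in the source lemma.
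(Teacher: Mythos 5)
Your proposal is correct and coincides with the paper's own proof: both first apply Lemma \ref{n3-2:lemma:2022-324b} to convert the shift $z_2\mapsto z_2+p\tau$ into the index shift $s\mapsto s+mp$ with the factor $e^{-2\pi impz_1}$, and then expand $\Phi^{[m,s+mp]}_i$ via Lemma \ref{n3-2:lemma:2022-312a} with $j=mp$, using part 1) for $p\geq 0$ and part 2) for $p\leq 0$, with (iii) following by subtraction. The bookkeeping points you flag (the exponential multiplying the whole correction sum, and the summation limits) are handled exactly as you describe.
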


\begin{proof} These formulas are obtained from 
Lemma \ref{n3-2:lemma:2022-324b} and Lemma \ref{n3-2:lemma:2022-312a}
as follows.

\medskip

\noindent
1) \,\ If $p \geq 0$, then 
{\allowdisplaybreaks
\begin{eqnarray*}
& & \hspace{-10mm}
\Phi^{[m,s]}_1(\tau, z_1, z_2+p\tau, 0) \, = \, e^{-2\pi impz_1}
\Phi^{[m,s+mp]}_1(\tau, z_1, z_2, 0)
\\[2mm]
&=&
e^{-2\pi impz_1} \Bigg\{
\Phi^{[m, s]}_1(\tau, z_1,z_2,0)
\\[2mm]
& &
-\sum_{k=0}^{mp-1}e^{\pi i(s+k)(z_1-z_2)} q^{-\frac{(s+k)^2}{4m}}
\theta_{s+k,m}(\tau, z_1+z_2)
\Bigg\}
\\[2mm]
& & \hspace{-10mm}
\Phi^{[m,s]}_2(\tau, z_1, z_2+p\tau, 0) \, = \, e^{-2\pi impz_1}
\Phi^{[m,s+mp]}_2(\tau, z_1, z_2, 0)
\\[2mm]
&=&
e^{-2\pi impz_1} \Bigg\{
\Phi^{[m, s]}_2(\tau, z_1,z_2,0)
\\[2mm]
& &
-\sum_{k=0}^{mp-1}e^{\pi i(s+k)(z_1-z_2)} q^{-\frac{(s+k)^2}{4m}}
\theta_{-(s+k),m}(\tau, z_1+z_2)
\Bigg\}
\end{eqnarray*}}
proving 1).

\medskip

\noindent
2) \,\ If $p \leq 0$, then 
{\allowdisplaybreaks
\begin{eqnarray*}
& & \hspace{-10mm}
\Phi^{[m,s]}_1(\tau, z_1, z_2+p\tau, 0) \, = \, e^{-2\pi impz_1}
\Phi^{[m,s+mp]}_1(\tau, z_1, z_2, 0)
\\[2mm]
&=&
e^{-2\pi impz_1} \Bigg\{
\Phi^{[m, s]}_1(\tau, z_1,z_2,0)
\\[2mm]
& &
+\sum^{-1}_{k=mp}e^{\pi i(s+k)(z_1-z_2)} q^{-\frac{(s+k)^2}{4m}}
\theta_{s+k,m}(\tau, z_1+z_2)
\Bigg\}
\\[2mm]
& & \hspace{-10mm}
\Phi^{[m,s]}_2(\tau, z_1, z_2+p\tau, 0) \, = \, e^{-2\pi impz_1}
\Phi^{[m,s+mp]}_2(\tau, z_1, z_2, 0)
\\[2mm]
&=&
e^{-2\pi impz_1} \Bigg\{
\Phi^{[m, s]}_2(\tau, z_1,z_2,0)
\\[2mm]
& &
+\sum^{-1}_{k=mp}e^{\pi i(s+k)(z_1-z_2)} q^{-\frac{(s+k)^2}{4m}}
\theta_{-(s+k),m}(\tau, z_1+z_2)
\Bigg\}
\end{eqnarray*}}
proving 2).
\end{proof}

\section{Coroot lattice of $D(2,1;a)$ and formulas for $\Phi^{[m,s]}$}

We consider the Dynkin diagram of the affine Lie superalgebra 
$\widehat{D}(2,1;a)$ 

\setlength{\unitlength}{1mm}
\begin{picture}(32,13)
\put(5,1){\circle{3}}
\put(16,1){\circle{3}}
\put(24,-7){\circle{3}}
\put(24,9){\circle{3}}
\put(6.5,1){\line(1,0){8}}
\put(17.5,2.5){\line(1,1){5.5}}
\put(17.5,-0.5){\line(1,-1){5.5}}
\put(16,1){\makebox(0,0){$\times$}}
\put(5,5){\makebox(0,0){$\alpha_0$}}
\put(15,5){\makebox(0,0){$\alpha_1$}}
\put(29,9){\makebox(0,0){$\alpha_2$}}
\put(29,-7){\makebox(0,0){$\alpha_3$}}
\put(10.5,-2){\makebox(0,0){$-1$}}
\put(24.5,4){\makebox(0,0){$-a$}}
\put(25.5,-2){\makebox(0,0){$a+1$}}
\end{picture} \,\ 
$(a \in \ccc \backslash \{0, -1\}$) with the inner product $( \,\ | \,\ )$

\vspace{10mm}

\noindent
such that \,\ $\Big((\alpha_i|\alpha_j)\Big)_{i,j=0, 1,2,3} 
= \, \left(
\begin{array}{cccc}
2 & -1 & 0 & 0 \\[0.5mm]
-1 & 0 & -a & a+1 \\[0.5mm]
0 & -a & 2a & 0 \\[0.5mm]
0 & a+1 & 0 & -2(a+1)
\end{array} \right) $ .

\medskip

The coroots $\alpha_i^{\vee}= \frac{2}{|\alpha_i|^2}\alpha_i$ \,\ 
$(i=2,3)$ \,\ are given by
\begin{subequations}
\begin{equation}
\alpha_2^{\vee} = \frac{1}{a} \alpha_2, \hspace{10mm}
\alpha_3^{\vee} = \frac{-1}{a+1} \alpha_3
\label{n3-2:eqn:2022-312a1}
\end{equation}
and satisfy
\begin{equation}
|\alpha_2^{\vee}|^2 = \frac{2}{a}, \qquad 
|\alpha_3^{\vee}|^2 = \frac{-2}{a+1}, \qquad 
(\alpha_1| \alpha_2^{\vee}) = (\alpha_1| \alpha_3^{\vee}) =-1
\label{n3-2:eqn:2022-312a2}
\end{equation}
\end{subequations}

Let $\delta :=\alpha_0+2\alpha_1+\alpha_2+\alpha_3$ be the primitive 
imaginary root of $\widehat{D}(2,1,a)$, and $\Lambda_0$ be the fundamental
weight of $\widehat{D}(2,1,a)$ defined by 
$$
(\Lambda_0 | \alpha_i) \, = \, \delta_{i,0} \hspace{10mm} \text{and}
\hspace{10mm} (\Lambda_0|\Lambda_0)=0 \, .
$$
We define the coordinates on the Cartan subalgebra $\hhh$ of 
$\widehat{D}(2,1;a)$ by
\begin{subequations}
{\allowdisplaybreaks
\begin{eqnarray}
& & \hspace{-12mm}
(\tau, z_1, z_2, z_3,t)
\nonumber
\\[2mm]
& & \hspace{-15mm}
= \,\ 
2\pi i\Big\{-\tau \Lambda_0
\, - \, (z_2+z_3) \, \frac{\theta}{2}
\, + \, \frac{z_1-z_2}{2a} \, \alpha_2
\, - \, \frac{z_1-z_3}{2(a+1)} \, \alpha_3
\, + \, t \, \delta\Big\}
\nonumber
\\[2mm]
& &\hspace{-15mm}
= \,\ 
2\pi i\Big\{-\tau \Lambda_0
\, - \, (z_2+z_3) \, \frac{\theta}{2}
\, + \, \frac{z_1-z_2}{2} \, \alpha_2^{\vee}
\, + \, \frac{z_1-z_3}{2} \, \alpha_3^{\vee}
\, + \, t \, \delta\Big\}
\label{n3-2:eqn:2022-314a}
\end{eqnarray}}

\noindent
where $\theta := 2\alpha_1+\alpha_2+\alpha_3$ is the highest root.
From this definition of coordinates we have, in particular, 
the following :
\begin{equation}
(\tau, \, z_2+z_3, \, z_3-z_2, \, z_2-z_3, \, 0) 
\, = \, 2\pi i \big\{
-\tau \Lambda_0+z_2 \alpha_2^{\vee}+z_3\alpha_3^{\vee}\big\}
\label{n3-2:eqn:2022-314b}
\end{equation}
\end{subequations}

\begin{note} 
\label{n3-2:note:2022-316c}
The following formulas hold for $h=(\tau, z_1, z_2, z_3,t) \in \hhh$ :
\begin{enumerate}
\item[{\rm 1)}] \,\ $\left\{
\begin{array}{lcl}
e^{-\alpha_1(h)} &=& e^{2\pi iz_1} \\[1mm]
e^{-\alpha_2(h)} &=& e^{2\pi i(-z_1+z_2)} \\[1mm]
e^{-\alpha_3(h)} &=& e^{2\pi i(-z_1+z_3)} 
\end{array}\right.$

\item[{\rm 2)}] \,\ $\left\{
\begin{array}{lcl}
e^{-(\alpha_1+\alpha_2)(h)} &=& e^{2\pi iz_2} \\[1mm]
e^{-(\alpha_1+\alpha_3)(h)} &=& e^{2\pi iz_3} \\[1mm]
e^{-(\alpha_1+\alpha_2+\alpha_3)(h)} &=& e^{2\pi i(-z_1+z_2+z_3)} 
\end{array}\right.$

\item[{\rm 3)}] \,\ $\left\{
\begin{array}{lcl}
^{\alpha_2^{\vee}(h)} &=& e^{2\pi i \frac{1}{a}(z_1-z_2)} \\[1mm]
e^{\alpha_3^{\vee}(h)} &=& e^{2\pi i \frac{1}{a+1}(-z_1+z_3)} 
\end{array}\right.$
\end{enumerate}
\end{note}

\medskip

In this section, henceforward, we consider $\widehat{D}(2,1;a)$ 
such that 

\begin{equation}
a := \frac{-m}{m+1} \hspace{10mm} \text{where} \hspace{10mm}
m \in \tfrac12 \nnn
\label{n3-2:eqn:2022-314d}
\end{equation}
Then we have:

\begin{note}
\label{n3-2:note:2022-316a}
For $\widehat{D}(2,1;a)$ with {\rm \eqref{n3-2:eqn:2022-314d}}, 
the following formulas holds:
\begin{enumerate}
\item[{\rm 1)}] \,\ $\left\{
\begin{array}{lcl}
e^{a\alpha_2^{\vee}(h)} &=& e^{2\pi i(z_1-z_2)} \\[2mm] 
e^{a\alpha_3^{\vee}(h)} &=& e^{2\pi im(z_1-z_3)}
\end{array}\right. \hspace{3mm} \text{for} \,\ 
h=(\tau, z_1,z_2, z_3,t) \in \hhh $

\item[{\rm 2)}] \,\ $\alpha_2^{\vee} = - \frac{m+1}{m}\alpha_2 \, , 
\hspace{10mm}
\alpha_3^{\vee} = (m+1)\alpha_3$

\item[{\rm 3)}]
\begin{enumerate}
\item[{\rm (i)}] \,\ $|\alpha_2^{\vee}|^2 = - \frac{2(m+1)}{m} \, , 
\hspace{10mm}
|\alpha_3^{\vee}|^2 = -2(m+1)$

\vspace{1mm}

\item[{\rm (ii)}] \,\ $|j\alpha_2^{\vee}+k\alpha_3^{\vee}|^2 
= - \frac{2(m+1)}{m}(j^2+mk^2)$
\end{enumerate}
\end{enumerate}
\end{note}

\medskip

For $\alpha \in \hhh$ we define the linear automorphism 
$t_{\alpha}$ (cf. \cite{K1}) of $\hhh$ by
$$
t_{\alpha}(\lambda) := \lambda +(\lambda|\delta)\alpha
- \left\{\frac{(\alpha|\alpha)}{2}(\lambda|\delta)+(\lambda|\alpha)\right\} 
\, \delta \, .
$$
Then, by \eqref{n3-2:eqn:2022-312a2} and Note \ref{n3-2:note:2022-316a}, 
the action of $t_{j\alpha_2^{\vee}+k\alpha_3^{\vee}}$ is as follows:

\begin{note} \,\ 
\label{n3-2:note:2022-316b}
\begin{enumerate}
\item[{\rm 1)}] \,\ $t_{j\alpha_2^{\vee}+k\alpha_3^{\vee}}(\Lambda_0)
\, = \, \Lambda_0+j\alpha_2^{\vee}+k\alpha_3^{\vee}+ 
\frac{m+1}{m}(j^2+mk^2)\delta$

\item[{\rm 2)}] \,\ $t_{j\alpha_2^{\vee}+k\alpha_3^{\vee}}
(\alpha_1) \, = \, \alpha_1+(j+k)\delta$
\end{enumerate}
\end{note}

\medskip

For $\alpha \in \hhh$ such that $|\alpha|^2 \ne 0$, let 
$r_{\alpha}$ be the reflection on $\hhh$ with respect to $\alpha$, 
namely
$$
r_{\alpha}(\lambda) \, := \, \lambda - (\lambda|\alpha^{\vee}) \alpha 
\hspace{10mm} \text{where} \quad 
\alpha^{\vee} := \frac{2\alpha}{|\alpha|^2} \, ,
$$
We put $r_i := r_{\alpha_i} \,\ (i=2,3)$ and let $\overline{W} 
:= \langle r_2, \, r_3\rangle$ be the subgroup of 
$GL(\hhh)$ genarated by $r_i \,\ (i=2,3)$, and put 
$\varepsilon(w) := {\rm det}_{\hhh}(w)$ for $w \in \overline{W}$.
Under this setting, we compute 
{\allowdisplaybreaks
\begin{eqnarray*}
F^{(-)}_{a\Lambda_0} &:=& \sum_{j,k \in \zzz}
t_{j\alpha_2^{\vee}+k\alpha_3^{\vee}}\left(
\frac{e^{a\Lambda_0}}{1-e^{-\alpha_1}}\right) \, , \\[2mm]
A^{(-)}_{a\Lambda_0} &:=& \sum_{w \in \overline{W}}
\varepsilon(w) \, w \big(F^{(-)}_{a\Lambda_0}\big) \, .
\end{eqnarray*}}
First, by Note \ref{n3-2:note:2022-316b}, $F^{(-)}_{a\Lambda_0}$ is 
written as follows :
$$
F^{(-)}_{a\Lambda_0} = e^{a\Lambda_0}
\sum_{j,k \in \zzz}\frac{
e^{a(j\alpha_2^{\vee}+k\alpha_3^{\vee})} \, q^{j^2+mk^2}
}{1-e^{-\alpha_1} \, q^{j+k}}
$$
where $q:=e^{-\delta}$. Then, applying the elements in $\overline{W}$
to $F^{(-)}_{a\Lambda_0}$, we have
{\allowdisplaybreaks
\begin{eqnarray*}
\lefteqn{
A^{(-)}_{a\Lambda_0} \,\ = \,\ \big\{
1-r_2-r_3+r_2r_3\big\}(F^{(-)}_{a\Lambda_0})
}
\\[2mm]
&=& e^{a\Lambda_0} \Bigg\{
\sum_{j,k \in \zzz}\frac{
e^{a(j\alpha_2^{\vee}+k\alpha_3^{\vee})} \, q^{j^2+mk^2}
}{1-e^{-\alpha_1} \, q^{j+k}}
\, -
\sum_{j,k \in \zzz}\frac{
e^{a(-j\alpha_2^{\vee}+k\alpha_3^{\vee})} \, q^{j^2+mk^2}
}{1-e^{-(\alpha_1+\alpha_2)} \, q^{j+k}}
\\[2mm]
& &-
\sum_{j,k \in \zzz}\frac{
e^{a(j\alpha_2^{\vee}-k\alpha_3^{\vee})} \, q^{j^2+mk^2}
}{1-e^{-(\alpha_1+\alpha_3)} \, q^{j+k}}
\, -
\sum_{j,k \in \zzz}\frac{
e^{-a(j\alpha_2^{\vee}+k\alpha_3^{\vee})} \, q^{j^2+mk^2}
}{1-e^{-(\alpha_1+\alpha_2+\alpha_3)} \, q^{j+k}}
\Bigg\}
\end{eqnarray*}}
We write this formula by using the coordinates. 
By Note \ref{n3-2:note:2022-316c} and Note \ref{n3-2:note:2022-316a},
this formula is written for $h =(\tau, z_1, z_2, z_3, 0)$ 
as follows: 
{\allowdisplaybreaks
\begin{eqnarray}
A^{(-)}_{a\Lambda_0}(h) &=&
\sum_{j, \, k \, \in \, \zzz}
\frac{e^{2\pi ij(z_1-z_2)+2\pi ikm(z_1-z_3)} \, q^{j^2+mk^2}
}{1-e^{2\pi iz_1} \, q^{j+k}}
\nonumber
\\[2mm]
& &-
\sum_{j, \, k \, \in \, \zzz}
\frac{e^{-2\pi ij(z_1-z_2)+2\pi ikm(z_1-z_3)} \, q^{j^2+mk^2}
}{1-e^{2\pi iz_2} \, q^{j+k}}
\nonumber
\\[2mm]
& &-
\sum_{j, \, k \, \in \, \zzz}
\frac{e^{2\pi ij(z_1-z_2)-2\pi ikm(z_1-z_3)} \, q^{j^2+mk^2}
}{1-e^{2\pi iz_3} \, q^{j+k}}
\nonumber
\\[2mm]
& &+
\sum_{j, \, k \, \in \, \zzz}
\frac{e^{-2\pi ij(z_1-z_2)-2\pi ikm(z_1-z_3)} \, q^{j^2+mk^2}
}{1-e^{2\pi i(-z_1+z_2+z_3)} \, q^{j+k}}
\label{n3-2:eqn:2022-316a}
\end{eqnarray}}
We compute the RHS of this equation \eqref{n3-2:eqn:2022-316a}
in 2 ways. First, putting $k=r-j$, the RHS of 
\eqref{n3-2:eqn:2022-316a} becomes as follows:
\begin{subequations}
{\allowdisplaybreaks
\begin{eqnarray}
& & \hspace{-7mm}
\text{RHS of \eqref{n3-2:eqn:2022-316a}}
\,\ = \,\ 
\sum_{j, \, r \, \in \, \zzz}
\frac{e^{2\pi ij(z_1-z_2)+2\pi i(r-j)m(z_1-z_3)} \, q^{j^2+m(r-j)^2}
}{1-e^{2\pi iz_1} \, q^{r}}
\nonumber
\\[2mm]
& &-
\sum_{j, \, r \, \in \, \zzz}
\frac{e^{-2\pi ij(z_1-z_2)+2\pi i(r-j)m(z_1-z_3)} \, q^{j^2+m(r-j)^2}
}{1-e^{2\pi iz_2} \, q^{r}}
\nonumber
\\[2mm]
& &-
\sum_{j, \, r \, \in \, \zzz}
\frac{e^{2\pi ij(z_1-z_2)-2\pi i(r-j)m(z_1-z_3)} \, q^{j^2+m(r-j)^2}
}{1-e^{2\pi iz_3} \, q^{r}}
\nonumber
\\[2mm]
& &+
\sum_{j, \, r \, \in \, \zzz}
\frac{e^{-2\pi ij(z_1-z_2)-2\pi ikm(z_1-z_3)} \, q^{j^2+m(r-j)^2}
}{1-e^{2\pi i(-z_1+z_2+z_3)} \, q^{r}}
\nonumber
\\[2mm]
&=&
\sum_{j \in \zzz}q^{(m+1)j^2}
e^{2\pi ij(z_1-z_2)-2\pi ijm(z_1-z_3)}
\sum_{r \in \zzz}\frac{
e^{2\pi imr(z_1-z_3-2jr)}q^{mr^2}}{1-e^{2\pi iz_1}q^r}
\nonumber
\\[2mm]
& &- \, 
\sum_{j \in \zzz}q^{(m+1)j^2}
e^{-2\pi ij(z_1-z_2)-2\pi ijm(z_1-z_3)}
\sum_{r \in \zzz}\frac{
e^{2\pi imr(z_1-z_3-2jr)}q^{mr^2}}{1-e^{2\pi iz_2}q^r}
\nonumber
\\[2mm]
& &- \, 
\sum_{j \in \zzz}q^{(m+1)j^2}
e^{2\pi ij(z_1-z_2)+2\pi ijm(z_1-z_3)}
\sum_{r \in \zzz}\frac{
e^{2\pi imr(-z_1+z_3-2jr)}q^{mr^2}}{1-e^{2\pi iz_3}q^r}
\nonumber
\\[2mm]
& &+ \, 
\sum_{j \in \zzz}q^{(m+1)j^2}
e^{-2\pi ij(z_1-z_2)+2\pi ijm(z_1-z_3)}
\sum_{r \in \zzz}\frac{
e^{2\pi imr(-z_1+z_3-2jr)}q^{mr^2}}{1-e^{2\pi i(-z_1+z_2+z_3)}q^r}
\nonumber
\\[2mm]
&=&
\sum_{j \in \zzz}q^{(m+1)j^2} \, 
e^{2\pi ij(z_1-z_2)-2\pi ijm(z_1-z_3)} \, 
\Phi^{[m,0]}_1(\tau, \, z_1, \, -z_3-2j\tau, \, 0)
\nonumber
\\[2mm]
& &- 
\sum_{j \in \zzz}q^{(m+1)j^2} 
e^{-2\pi ij(z_1-z_2)-2\pi ijm(z_1-z_3)} 
\Phi^{[m,0]}_1(\tau, z_2, z_1-z_2-z_3-2j\tau, 0)
\nonumber
\\[2mm]
& &- 
\sum_{j \in \zzz}q^{(m+1)j^2} \, 
e^{2\pi ij(z_1-z_2)+2\pi ijm(z_1-z_3)} \, 
\Phi^{[m,0]}_1(\tau, \, z_3, \, -z_1-2j\tau, \, 0)
\nonumber
\\[2mm]
& &+
\sum_{j \in \zzz}q^{(m+1)j^2} 
e^{-2\pi ij(z_1-z_2)+2\pi ijm(z_1-z_3)} 
\Phi^{[m,0]}_1(\tau, -z_1+z_2+z_3, -z_3-2j\tau, 0)
\nonumber
\\[2mm]
&=&
\sum_{j \in \zzz}q^{(m+1)j^2} \, 
e^{2\pi ij(z_1-z_2)-2\pi ijm(z_1-z_3)} \, 
\Phi^{[m,0]}_1(\tau, \, z_1, \, -z_3-2j\tau, \, 0)
\nonumber
\\[2mm]
& &- 
\sum_{j \in \zzz}q^{(m+1)j^2} 
e^{-2\pi ij(z_1-z_2)-2\pi ijm(z_1-z_3)} 
\Phi^{[m,0]}_1(\tau, z_2, z_1-z_2-z_3-2j\tau, 0)
\nonumber
\\[2mm]
& &-
\sum_{j \in \zzz}q^{(m+1)j^2} \, 
e^{2\pi ij(z_1-z_2)-2\pi ijm(z_1-z_3)} \, 
\Phi^{[m,0]}_2(\tau, \, z_1, \, -z_3-2j\tau, \, 0)
\nonumber
\\[2mm]
& &+ 
\sum_{j \in \zzz}q^{(m+1)j^2} 
e^{-2\pi ij(z_1-z_2)-2\pi ijm(z_1-z_3)} 
\Phi^{[m,0]}_2(\tau, z_2, z_1-z_2-z_3-2j\tau, 0)
\nonumber
\\[2mm]
&=&
\sum_{j \in \zzz}q^{(m+1)j^2} \, 
e^{2\pi ij(z_1-z_2)-2\pi ijm(z_1-z_3)} \, 
\Phi^{[m,0]}(\tau, \, z_1, \, -z_3-2j\tau, \, 0)
\nonumber
\\[2mm]
& &- 
\sum_{j \in \zzz}q^{(m+1)j^2} 
e^{-2\pi ij(z_1-z_2)-2\pi ijm(z_1-z_3)} 
\Phi^{[m,0]}(\tau, z_2, z_1-z_2-z_3-2j\tau, 0)
\nonumber
\\[-2mm]
& &
\label{n3-2:eqn:2022-316b}
\end{eqnarray}}

\vspace{-3mm}

\noindent
by using Lemma \ref{n3-2:lemma:2022-313b}. Next we compute the the RHS 
of \eqref{n3-2:eqn:2022-316a} by putting $j=r-k$ as follows:
{\allowdisplaybreaks
\begin{eqnarray}
& & \hspace{-7mm}
\text{RHS of \eqref{n3-2:eqn:2022-316a}}
\,\ = \,\ 
\sum_{k, \, r \, \in \, \zzz}
\frac{e^{2\pi i(r-k)(z_1-z_2)+2\pi ikm(z_1-z_3)} \, q^{(r-k)^2+mk^2}
}{1-e^{2\pi iz_1} \, q^{r}}
\nonumber
\\[2mm]
& &-
\sum_{k, \, r \, \in \, \zzz}
\frac{e^{-2\pi i(r-k)(z_1-z_2)+2\pi ikm(z_1-z_3)} \, q^{(r-k)^2+mk^2}
}{1-e^{2\pi iz_2} \, q^{r}}
\nonumber
\\[2mm]
& &-
\sum_{k, \, r \, \in \, \zzz}
\frac{e^{2\pi i(r-k)(z_1-z_2)-2\pi ikm(z_1-z_3)} \, q^{(r-k)^2+mk^2}
}{1-e^{2\pi iz_3} \, q^{r}}
\nonumber
\\[2mm]
& &+
\sum_{k, \, r \, \in \, \zzz}
\frac{e^{-2\pi i(r-k)(z_1-z_2)-2\pi ikm(z_1-z_3)} \, q^{(r-k)^2+mk^2}
}{1-e^{2\pi i(-z_1+z_2+z_3)} \, q^{r}}
\nonumber
\\[2mm]
&=&
\sum_{k \in \zzz} q^{(m+1)k^2} 
e^{-2\pi ik(z_1-z_2)+2\pi ikm(z_1-z_3)} \, 
\sum_{r \in \zzz} \frac{e^{2\pi ir(z_1-z_2-2k\tau)} \, q^{r^2}
}{1-e^{2\pi iz_1} \, q^r}
\nonumber
\\[2mm]
& & -
\sum_{k \in \zzz} q^{(m+1)k^2} 
e^{2\pi ik(z_1-z_2)+2\pi ikm(z_1-z_3)} \, 
\sum_{r \in \zzz} \frac{e^{2\pi ir(-z_1+z_2-2k\tau)} \, q^{r^2}
}{1-e^{2\pi iz_2} \, q^r}
\nonumber
\\[2mm]
& & -
\sum_{k \in \zzz} q^{(m+1)k^2} 
e^{-2\pi ik(z_1-z_2)-2\pi ikm(z_1-z_3)} \, 
\sum_{r \in \zzz} \frac{e^{2\pi ir(z_1-z_2-2k\tau)} \, q^{r^2}
}{1-e^{2\pi iz_3} \, q^r}
\nonumber
\\[2mm]
& & +
\sum_{k \in \zzz} q^{(m+1)k^2} 
e^{2\pi ik(z_1-z_2)-2\pi ikm(z_1-z_3)} \, 
\sum_{r \in \zzz} \frac{e^{2\pi ir(-z_1+z_2-2k\tau)} \, q^{r^2}
}{1-e^{2\pi i(-z_1+z_2+z_3)} \, q^r}
\nonumber
\\[2mm]
&=&
\sum_{k \in \zzz} q^{(m+1)k^2} 
e^{-2\pi ik(z_1-z_2)+2\pi ikm(z_1-z_3)} \, 
\Phi^{[1,0]}_1(\tau, \, z_1, \, -z_2-2k\tau, \, 0)
\nonumber
\\[2mm]
& &-
\sum_{k \in \zzz} q^{(m+1)k^2} 
e^{2\pi ik(z_1-z_2)+2\pi ikm(z_1-z_3)} \, 
\Phi^{[1,0]}_1(\tau, \, z_2, \, -z_1-2k\tau, \, 0)
\nonumber
\\[2mm]
& &-
\sum_{k \in \zzz} q^{(m+1)k^2} 
e^{-2\pi ik(z_1-z_2)-2\pi ikm(z_1-z_3)} \, 
\Phi^{[1,0]}_1(\tau, \, z_3, \, z_1-z_2-z_3-2k\tau, \, 0)
\nonumber
\\[2mm]
& &+
\sum_{k \in \zzz} q^{(m+1)k^2} 
e^{2\pi ik(z_1-z_2)-2\pi ikm(z_1-z_3)} \, 
\Phi^{[1,0]}_1(\tau, \, -z_1+z_2+z_3, \, -z_3-2k\tau, \, 0)
\nonumber
\\[2mm]
&=&
\sum_{k \in \zzz} q^{(m+1)k^2} 
e^{-2\pi ik(z_1-z_2)+2\pi ikm(z_1-z_3)} \, 
\Phi^{[1,0]}_1(\tau, \, z_1, \, -z_2-2k\tau, \, 0)
\nonumber
\\[2mm]
& &- 
\sum_{k \in \zzz} q^{(m+1)k^2} 
e^{-2\pi ik(z_1-z_2)+2\pi ikm(z_1-z_3)} \, 
\Phi^{[1,0]}_2(\tau, \, z_1, \, -z_2-2k\tau, \, 0)
\nonumber
\\[2mm]
& &-
\sum_{k \in \zzz} q^{(m+1)k^2} 
e^{-2\pi ik(z_1-z_2)-2\pi ikm(z_1-z_3)} \, 
\Phi^{[1,0]}_1(\tau, \, z_3, \, z_1-z_2-z_3-2k\tau, \, 0)
\nonumber
\\[2mm]
& &+\sum_{k \in \zzz} q^{(m+1)k^2} 
e^{-2\pi ik(z_1-z_2)-2\pi ikm(z_1-z_3)} \, 
\Phi^{[1,0]}_2(\tau, \, z_3, \, z_1-z_2-z_3-2k\tau, \, 0)
\nonumber
\\[2mm]
&=&
\sum_{k \in \zzz} q^{(m+1)k^2} 
e^{-2\pi ik(z_1-z_2)+2\pi ikm(z_1-z_3)} \, 
\Phi^{[1,0]}(\tau, \, z_1, \, -z_2-2k\tau, \, 0)
\nonumber
\\[2mm]
& &- 
\sum_{k \in \zzz} q^{(m+1)k^2} 
e^{-2\pi ik(z_1-z_2)-2\pi ikm(z_1-z_3)} \, 
\Phi^{[1,0]}(\tau, \, z_3, \, z_1-z_2-z_3-2k\tau, \, 0)
\nonumber
\\[-2mm]
& &
\label{n3-2:eqn:2022-316c}
\end{eqnarray}}
\end{subequations}

\vspace{-3mm}

\noindent
by using Lemma \ref{n3-2:lemma:2022-313b}. 
Then, replacing $-j$ with $j$ in \eqref{n3-2:eqn:2022-316b} and 
replacing $-k$ with $j$ in \eqref{n3-2:eqn:2022-316c}, the relation 
\lq \lq \eqref{n3-2:eqn:2022-316b} = \eqref{n3-2:eqn:2022-316c}" 
gives the following lemma:

\begin{lemma} 
\label{n3-2:lemma:2022-314a}
For $m \in \frac12 \nnn$ \, the following formula holds:
{\allowdisplaybreaks
\begin{eqnarray}
& & \,\
\sum_{j \in \zzz}q^{(m+1)j^2}
e^{-2\pi ij(z_1-z_2)+2\pi ijm(z_1-z_3)} \, 
\Phi^{[m,0]}(\tau, \, z_1, \, -z_3+2j\tau, \, 0) 
\nonumber
\\[3mm]
& & \hspace{-3mm}
- \,\ \sum_{j \in \zzz}q^{(m+1)j^2}
e^{2\pi ij(z_1-z_2)+2\pi ijm(z_1-z_3)} \, 
\Phi^{[m,0]}(\tau, \, z_2, \, z_1-z_2-z_3+2j\tau, \, 0) 
\nonumber
\\[3mm]
&=& \,\ 
\sum_{j \in \zzz}q^{(m+1)j^2}
e^{2\pi ij(z_1-z_2)-2\pi ijm(z_1-z_3)} \, 
\Phi^{[1,0]}(\tau, \, z_1, \, -z_2+2j\tau, \, 0) 
\nonumber
\\[3mm]
& & \hspace{-3mm}
- \,\ \sum_{j \in \zzz}q^{(m+1)j^2}
e^{2\pi ij(z_1-z_2)+2\pi ijm(z_1-z_3)} \, 
\Phi^{[1,0]}(\tau, \, z_3, \, z_1-z_2-z_3+2j\tau, \, 0) 
\nonumber
\\[-2mm]
& &
\label{n3-2:eqn:2022-316d} 
\end{eqnarray}}
\end{lemma}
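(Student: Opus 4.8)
The plan is to evaluate the single antisymmetrized lattice sum $A^{(-)}_{a\Lambda_0}(h)$ recorded in \eqref{n3-2:eqn:2022-316a} in two different ways and then to read off \eqref{n3-2:eqn:2022-316d} as the assertion that the two evaluations coincide. Since $A^{(-)}_{a\Lambda_0} = \sum_{w \in \overline{W}} \varepsilon(w)\, w\big(F^{(-)}_{a\Lambda_0}\big)$ is one fixed function, any two honest computations of it must agree, and the content of the lemma is exactly this equality after a cosmetic relabelling of summation indices. I would therefore treat \eqref{n3-2:eqn:2022-316a} as the starting point (it follows from Notes \ref{n3-2:note:2022-316c}, \ref{n3-2:note:2022-316a} and \ref{n3-2:note:2022-316b} together with the choice \eqref{n3-2:eqn:2022-314d}) and concentrate on the two reparametrizations of the double sum over $(j,k) \in \zzz^2$.

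The key observation is that every denominator in \eqref{n3-2:eqn:2022-316a} depends on $(j,k)$ only through $r := j+k$, which is the index that will feed a geometric-type series. One may coordinatize $\zzz^2$ either by $(j,r)$, substituting $k = r-j$, or by $(k,r)$, substituting $j = r-k$. Restricted to $j+k=r$, the Gaussian exponent becomes $j^2+mk^2 = (m+1)j^2 - 2mrj + mr^2$ in the first case and $j^2 + mk^2 = (m+1)k^2 - 2rk + r^2$ in the second. Carrying out the inner sum over $r$ and comparing with the definition \eqref{n3-2:eqn:2022-111a}, the first parametrization collects the inner series into $\Phi^{[m,0]}_1$ with a shifted second argument (and an outer prefactor $q^{(m+1)j^2}$), while the second collects it into $\Phi^{[1,0]}_1$; this produces \eqref{n3-2:eqn:2022-316b} and \eqref{n3-2:eqn:2022-316c} respectively, each at first a sum of four $\Phi_1$-contributions, one for each of the four elements $1, r_2, r_3, r_2r_3$ of $\overline{W}$.

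It remains to fold the four $\Phi_1$-terms into two $\Phi = \Phi_1 - \Phi_2$ terms. For this I would apply part 2) of Lemma \ref{n3-2:lemma:2022-313b}, which rewrites a $\Phi_1$ with a $\tau$-shifted argument as a $\Phi_2$ with the arguments interchanged and negated, up to an explicit exponential factor. Applied to the contributions coming from $r_3$ and $r_2r_3$, it turns them into $\Phi_2$-terms whose arguments match those of the $1$- and $r_2$-contributions, so that the $1$- and $r_3$-terms fuse into $\Phi^{[m,0]}$ and the $r_2$- and $r_2r_3$-terms fuse into (minus) $\Phi^{[m,0]}$, and similarly on the modulus-$1$ side; equating the two resulting two-term expressions and performing the sign flips $j \mapsto -j$ in \eqref{n3-2:eqn:2022-316b} and $k \mapsto j$ in \eqref{n3-2:eqn:2022-316c} yields \eqref{n3-2:eqn:2022-316d}. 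The main obstacle is purely one of bookkeeping: one must track the several exponential prefactors $q^{(m+1)j^2}\,e^{\pm 2\pi ij(z_1-z_2)\pm 2\pi ijm(z_1-z_3)}$ through the reparametrizations and through the swap of part 2) of Lemma \ref{n3-2:lemma:2022-313b}, and verify that after the swap the prefactors of the paired terms agree exactly, so that the combination into $\Phi_1 - \Phi_2$ is clean and the index relabellings line the two sides up precisely.
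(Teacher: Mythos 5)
Your proposal follows essentially the same route as the paper: the paper also evaluates the lattice sum $A^{(-)}_{a\Lambda_0}$ of \eqref{n3-2:eqn:2022-316a} via the two substitutions $k=r-j$ and $j=r-k$, collects the inner $r$-series into $\Phi^{[m,0]}_1$ resp.\ $\Phi^{[1,0]}_1$, folds the four $\Phi_1$-terms into two $\Phi$-terms using part 2) of Lemma \ref{n3-2:lemma:2022-313b}, and finishes with exactly the index relabellings you describe (note only that on the modulus-$1$ side the pairing is $1$ with $r_2$ and $r_3$ with $r_2r_3$, and the second relabelling is $k\mapsto -j$, not $k\mapsto j$). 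These are cosmetic points; the argument is correct and coincides with the paper's proof.
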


From this lemma, we obtain the following:


\begin{prop} 
\label{n3-2:prop:2022-316a}
For $m \in \frac12 \nnn$ \, the following formula holds:

\vspace{-2mm}

\begin{subequations}
{\allowdisplaybreaks
\begin{eqnarray}
& &
\theta_{0, \, m+1}\Big(\tau, \, \frac{(z_1-z_2)+m(z_1+z_3)}{m+1}\Big)
\, \Phi^{[m,0]}(\tau, \, z_1, \, -z_3, \, 0) 
\nonumber
\\[2mm]
& & \hspace{-5mm}
- \,\ 
\theta_{0, \, m+1}\Big(\tau, \, \frac{(z_1-z_2)+m(z_1-2z_2-z_3)}{m+1}\Big)
\, \Phi^{[m,0]}(\tau, \, z_2, \, z_1-z_2-z_3, \, 0)
\nonumber
\\[2mm]
& & \hspace{-5mm}
+ \, \sum_{j \, > \, 0} \, \sum_{k=1}^{2mj} 
q^{(m+1)j^2} \, q^{-\frac{k^2}{4m}} \, P^{[m]}_{j,k}(z_1, z_2, z_3) \, 
\big[\theta_{k,m}-\theta_{-k,m}\big](\tau, z_1-z_3)
\nonumber
\\[2mm]
&=&
i \, \theta_{0, m+1}\Big(\tau, \,\ \frac{(z_1+z_2) + m(z_1-z_3)}{m+1}\Big)
\,\ 
\frac{\eta(\tau)^3 \, \vartheta_{11}(\tau, \, z_1-z_2)}{
\vartheta_{11}(\tau, z_1) \, \vartheta_{11}(\tau, z_2)}
\nonumber
\\[2mm]
& &\hspace{-5mm}
+ \,\ i \, 
\theta_{0, m+1}\Big( \tau, \,\ \frac{(z_1-z_2-2z_3) \, + \, m(z_1-z_3)}{m+1}\Big)
\,
\frac{\eta(\tau)^3 \, \vartheta_{11}(\tau, \, z_1-z_2)}{
\vartheta_{11}(\tau, z_3) \, \vartheta_{11}(\tau, \, z_1-z_2-z_3)}
\nonumber
\\[0mm]
& &
\label{n3-2:eqn:2022-316e}
\end{eqnarray}}

\vspace{-8mm}

\noindent
where

\vspace{-3mm}

{\allowdisplaybreaks
\begin{eqnarray}
& & \hspace{-10mm}
P^{[m]}_{j,k}(z_1, z_2, z_3)
\nonumber
\\[3mm]
&:=&
e^{2\pi ij(z_1-z_2)} \, e^{\pi i(2mj-k)(z_1-2z_2-z_3)}
\, + \, 
e^{-2\pi ij(z_1-z_2)} \, e^{-\pi i(2mj-k)(z_1-2z_2-z_3)}
\nonumber
\\[3mm]
& & \hspace{2mm}
- \,\ 
e^{2\pi ij(z_1-z_2)} \, e^{\pi i(2mj-k)(z_1+z_3)}
\, - \, 
e^{-2\pi ij(z_1-z_2)} \, e^{-\pi i(2mj-k)(z_1+z_3)}
\nonumber
\\[-2mm]
& &
\label{n3-2:eqn:2022-316f}
\end{eqnarray}}
\end{subequations}
and \, $\vartheta_{ab}(\tau, z)$'s \, $(a, b \in \{0,1\})$ are the Mumford's 
theta functions (\cite{Mum}).
\end{prop}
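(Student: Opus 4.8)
The plan is to start from the identity \eqref{n3-2:eqn:2022-316d} of Lemma \ref{n3-2:lemma:2022-314a} and evaluate both of its sides in closed form. The left-hand side is a sum over $j\in\zzz$ of two families of terms in which $\Phi^{[m,0]}$ is evaluated with its second argument shifted by $2j\tau$, while the right-hand side is the analogous expression with $\Phi^{[1,0]}$ in place of $\Phi^{[m,0]}$. The strategy is to convert the two $\Phi^{[m,0]}$-families into the first two terms and the $P^{[m]}_{j,k}$-double sum on the left of \eqref{n3-2:eqn:2022-316e}, and to convert the two $\Phi^{[1,0]}$-families into the $\eta^3\vartheta_{11}$-terms on the right.

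First I would treat the left-hand side. To each summand I apply Lemma \ref{n3-2:lemma:2022-313a} with $p=2j$ (so that $mp=2mj\in\zzz$, as required, since $2m\in\nnn$), splitting off the $2j\tau$-shift. This writes each $\Phi^{[m,0]}$ with a shifted argument as a pure phase times the unshifted $\Phi^{[m,0]}$, plus a finite sum of correction terms $[\theta_{k,m}-\theta_{-k,m}](\tau,z_1-z_3)$; one checks that both families produce theta functions in the single variable $z_1-z_3$. Collecting the principal parts, the $j$-dependence reduces to pure Gaussian exponentials $q^{(m+1)j^2}e^{2\pi ij(\cdots)}$, and summing over $j\in\zzz$ folds them into $\theta_{0,m+1}$ with exactly the arguments $\frac{(z_1-z_2)+m(z_1+z_3)}{m+1}$ and $\frac{(z_1-z_2)+m(z_1-2z_2-z_3)}{m+1}$, yielding the first two terms on the left of \eqref{n3-2:eqn:2022-316e}.

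Next, for the right-hand side I would use the known closed-form expression for the level-one function $\Phi^{[1,0]}(\tau,z_1,z_2,0)$ in terms of Jacobi theta functions, namely a constant multiple of $\eta(\tau)^3\,\vartheta_{11}(\tau,z_1+z_2)/\bigl(\vartheta_{11}(\tau,z_1)\vartheta_{11}(\tau,z_2)\bigr)$. The $2j\tau$-shift in the second argument is then handled directly by the quasi-periodicity of $\vartheta_{11}$: the Gaussian factors $q^{-2j^2}$ produced by numerator and denominator cancel, leaving a pure exponential in $j$ which, summed against $q^{(m+1)j^2}$, once more produces $\theta_{0,m+1}$. After simplifying the theta quotients using the oddness of $\vartheta_{11}$, this gives precisely the two $\eta^3\vartheta_{11}$-terms on the right of \eqref{n3-2:eqn:2022-316e}, with the stated arguments of $\theta_{0,m+1}$.

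The remaining, and most delicate, task is to show that the correction terms assemble into the double sum $\sum_{j>0}\sum_{k=1}^{2mj}$ weighted by $P^{[m]}_{j,k}$. The four exponential monomials in \eqref{n3-2:eqn:2022-316f} arise from four sources: the two families of left-hand summands (giving the $(z_1-2z_2-z_3)$- and the $(z_1+z_3)$-type exponentials respectively) crossed with the signs $j>0$ versus $j<0$ in Lemma \ref{n3-2:lemma:2022-313a}. Here the principal phase carries a factor $e^{\pm\pi i(2mj)(\cdots)}$ which combines with the correction's $e^{\mp\pi ik(\cdots)}$ to produce exactly $e^{\pm\pi i(2mj-k)(\cdots)}$, with no reindexing of $k$ needed for the $j>0$ terms. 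To fold the $j<0$ contributions into the range $j>0$ I substitute $j\mapsto -j$ and $k\mapsto -k$ and invoke the $2m$-periodicity $\theta_{k+2m,m}=\theta_{k,m}$ together with the antisymmetry $\theta_{-k,m}-\theta_{k,m}=-[\theta_{k,m}-\theta_{-k,m}]$; the sign produced by this antisymmetry is what makes all four monomials appear with the precise signs recorded in \eqref{n3-2:eqn:2022-316f}, and the $j=0$ terms contribute empty inner sums, consistent with the range $j>0$. The main obstacle is exactly this bookkeeping: keeping the phase factors, the ranges of the inner $k$-sums, and the theta-index reductions mutually consistent so that the $j<0$ terms glue onto the $j>0$ terms without spurious powers of $q$. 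Once this is verified for all four sources, equating the two closed forms of \eqref{n3-2:eqn:2022-316d} gives \eqref{n3-2:eqn:2022-316e}.
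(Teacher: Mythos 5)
Your proposal is correct and follows essentially the same route as the paper's own proof: starting from Lemma \ref{n3-2:lemma:2022-314a}, evaluating the $\Phi^{[m,0]}$-side via Lemma \ref{n3-2:lemma:2022-313a} with $p=2j$ (principal parts summing to the $\theta_{0,m+1}$ factors, corrections folded from $j<0$ to $j>0$ by $(j,k)\mapsto(-j,-k)$ with the antisymmetry and $2m$-periodicity of $\theta_{k,m}-\theta_{-k,m}$), and evaluating the $\Phi^{[1,0]}$-side via its closed form and the quasi-periodicity of $\vartheta_{11}$, whose Gaussian factors cancel so the $j$-sum again yields $\theta_{0,m+1}$. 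This matches the paper's argument step for step, including the delicate bookkeeping you flag as the main obstacle.
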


\begin{proof} In order to prove this proposition, first we compute 
{\allowdisplaybreaks
\begin{eqnarray*}
& & \hspace{-5mm}
\text{LHS of \eqref{n3-2:eqn:2022-316d}}
\\[2mm]
&=&
\underbrace{\sum_{j \in \zzz}q^{(m+1)j^2}
e^{-2\pi ij(z_1-z_2)+2\pi ijm(z_1-z_3)} \, 
\Phi^{[m,0]}(\tau, \, z_1, \, -z_3+2j\tau, \, 0)}_{\rm (I)} 
\nonumber
\\[2mm]
& & \hspace{-3mm}
\underbrace{- \sum_{j \in \zzz}q^{(m+1)j^2}
e^{2\pi ij(z_1-z_2)+2\pi ijm(z_1-z_3)} \, 
\Phi^{[m,0]}(\tau, \, z_2, \, z_1-z_2-z_3+2j\tau, \, 0)}_{\rm (II)} 
\end{eqnarray*}}
We compute (I) and (II) by using Lemma \ref{n3-2:lemma:2022-313a} as follows: 
{\allowdisplaybreaks
\begin{eqnarray*}
& & \hspace{-5mm}
{\rm (I)} \, = \, \sum_{j \geq 0}q^{(m+1)j^2}
e^{-2\pi ij(z_1-z_2)+2\pi ijm(z_1-z_3)} \, \bigg\{
e^{-4\pi imjz_1}\Phi^{[m,0]}(\tau, \, z_1, \, -z_3, \, 0)
\\[2mm]
& &
- e^{-4\pi imjz_1}\sum_{k=0}^{2mj-1}e^{\pi ik(z_1+z_3)}q^{-\frac{k^2}{4m}}
\big[\theta_{k,m}-\theta_{-k,m}\big](\tau, z_1-z_3)\bigg\}
\\[2mm]
& &\hspace{-3mm}
+ \sum_{j < 0}q^{(m+1)j^2}
e^{-2\pi ij(z_1-z_2)+2\pi ijm(z_1-z_3)} \, \bigg\{
e^{-4\pi imjz_1}\Phi^{[m,0]}(\tau, \, z_1, \, -z_3, \, 0)
\\[2mm]
& &
+ e^{-4\pi imjz_1}\sum_{k=2mj}^{-1}e^{\pi ik(z_1+z_3)}q^{-\frac{k^2}{4m}}
\big[\theta_{k,m}-\theta_{-k,m}\big](\tau, z_1-z_3)\bigg\}
\\[2mm]
&=&
\underbrace{\sum_{j \in \zzz}q^{(m+1)j^2}
e^{-2\pi ij(z_1-z_2)-2\pi ijm(z_1+z_3)} }_{\substack{|| \\[0mm] 
{\displaystyle 
\theta_{0,m+1}\Big(\tau, -\frac{z_1-z_2+m(z_1+z_3)}{m+1}\Big)
}}}
\Phi^{[m,0]}(\tau, \, z_1, \, -z_3, \, 0)
+{\rm (I)}' +{\rm (I)}''
\end{eqnarray*}}
where
{\allowdisplaybreaks
\begin{eqnarray*}
{\rm (I)}' &:=& -\sum_{j > 0}q^{(m+1)j^2}
e^{-2\pi ij(z_1-z_2)-2\pi ijm(z_1+z_3)} 
\\[2mm]
& &
\times \sum_{k=0}^{2mj-1}e^{\pi ik(z_1+z_3)}q^{-\frac{k^2}{4m}}
\big[\theta_{k,m}-\theta_{-k,m}\big](\tau, z_1-z_3)
\\[2mm]
&=&
-\sum_{j > 0}\sum_{k=0}^{2mj-1}q^{(m+1)j^2-\frac{k^2}{4m}}
e^{-2\pi ij(z_1-z_2)-\pi i(2mj-k)(z_1+z_3)} 
\\[0mm]
& & \hspace{30mm}
\times \,\ 
\big[\theta_{k,m}-\theta_{-k,m}\big](\tau, z_1-z_3)
\\[2mm]
&=&
-\sum_{j > 0}\sum_{k=1}^{2mj}q^{(m+1)j^2-\frac{k^2}{4m}}
e^{-2\pi ij(z_1-z_2)-\pi i(2mj-k)(z_1+z_3)} 
\\[0mm]
& & \hspace{30mm}
\times \,\ 
\big[\theta_{k,m}-\theta_{-k,m}\big](\tau, z_1-z_3)
\\[2mm]
{\rm (I)}'' &:=& \sum_{j < 0}q^{(m+1)j^2}
e^{-2\pi ij(z_1-z_2)-2\pi ijm(z_1+z_3)} 
\\[2mm]
& &
\times \sum_{k=2mj}^{-1}e^{\pi ik(z_1+z_3)}q^{-\frac{k^2}{4m}}
\big[\theta_{k,m}-\theta_{-k,m}\big](\tau, z_1-z_3)
\\[1mm]
&=& \quad \text{letting} \,\ (j,k) \, \rightarrow \, (-j, -k)
\\[1mm]
&=&
- \sum_{j > 0}\sum_{k=1}^{2mj}q^{(m+1)j^2-\frac{k^2}{4m}}
e^{2\pi ij(z_1-z_2)+\pi i(2mj-k)(z_1+z_3)} 
\\[0mm]
& & \hspace{30mm}
\times \,\ 
\big[\theta_{k,m}-\theta_{-k,m}\big](\tau, z_1-z_3)
\end{eqnarray*}}
Then we have
\begin{subequations}
{\allowdisplaybreaks
\begin{eqnarray}
& & \hspace{-7mm}
{\rm (I)} \, = \, \theta_{0,m+1}\Big(\tau, -\frac{z_1-z_2+m(z_1+z_3)}{m+1}\Big)
\Phi^{[m,0]}(\tau, \, z_1, \, -z_3, \, 0)
\nonumber
\\[2mm]
& &
- \sum_{j > 0} \, \sum_{k=1}^{2mj}
q^{(m+1)j^2-\frac{k^2}{4m}}
\big\{e^{-2\pi ij(z_1-z_2)-\pi i(2mj-k)(z_1+z_3)} \hspace{20mm}
\nonumber
\\[1mm]
& & \hspace{5mm}
+e^{2\pi ij(z_1-z_2)+\pi i(2mj-k)(z_1+z_3)}
\big\} 
\big[\theta_{k,m}-\theta_{-k,m}\big](\tau, z_1-z_3)
\label{n3-2:eqn:2022-317a1}
\end{eqnarray}}
\noindent
Next we compute (II):
{\allowdisplaybreaks
\begin{eqnarray*}
& & \hspace{-8mm}
{\rm (II)} \,\ = 
\\[2mm]
& & \hspace{-8mm}
- \sum_{j \geq 0}q^{(m+1)j^2}
e^{2\pi ij(z_1-z_2)+2\pi ijm(z_1-z_3)} \bigg\{
e^{-4\pi imjz_2} \Phi^{[m,0]}(\tau, z_2, z_1-z_2-z_3, 0)
\\[0mm]
& &
- \,\ e^{-4\pi imjz_2}\sum_{k=0}^{2mj-1}
e^{\pi ik(-z_1+2z_2+z_3)} q^{-\frac{k^2}{4m}}
\big[\theta_{k,m}-\theta_{-k,m}\big](\tau, z_1-z_3)\bigg\}
\\[2mm]
& &\hspace{-8mm}
- \sum_{j<0}q^{(m+1)j^2}
e^{2\pi ij(z_1-z_2)+2\pi ijm(z_1-z_3)} \bigg\{
e^{-4\pi imjz_2} \Phi^{[m,0]}(\tau, z_2, z_1-z_2-z_3, 0)
\\[0mm]
& &
+ \,\ e^{-4\pi imjz_2}\sum_{k=2mj}^{-1}
e^{\pi ik(-z_1+2z_2+z_3)} q^{-\frac{k^2}{4m}}
\big[\theta_{k,m}-\theta_{-k,m}\big](\tau, z_1-z_3)\bigg\}
\\[2mm]
& & \hspace{-8mm}
= \, - \, \underbrace{\sum_{j \in \zzz}q^{(m+1)j^2}
e^{2\pi ij(z_1-z_2)+2\pi ijm(z_1-2z_2-z_3)}}_{
\substack{|| \\[0mm] {\displaystyle 
\theta_{0,m+1}\Big(\tau, \frac{z_1-z_2+m(z_1-2z_2-z_3)}{m+1}\Big)
}}}
\Phi^{[m,0]}(\tau, z_2, z_1-z_2-z_3, 0)
\\[2mm]
& &
+ \,\ {\rm (II)}' \, + \,\ {\rm (II)}''
\end{eqnarray*}}
where
{\allowdisplaybreaks
\begin{eqnarray*}
{\rm (II)}' &:=& 
\sum_{j \geq 0}q^{(m+1)j^2}
e^{2\pi ij(z_1-z_2)+2\pi ijm(z_1-2z_2-z_3)} 
\\[2mm]
& &
\times \, \sum_{k=0}^{2mj-1}
e^{\pi ik(-z_1+2z_2+z_3)} q^{-\frac{k^2}{4m}}
\big[\theta_{k,m}-\theta_{-k,m}\big](\tau, z_1-z_3)
\\[2mm]
&=&
\sum_{j > 0}\sum_{k=0}^{2mj-1}
q^{(m+1)j^2-\frac{k^2}{4m}}
e^{2\pi ij(z_1-z_2)+\pi i(2jm-k)(z_1-2z_2-z_3)} 
\\[0mm]
& & \hspace{20mm}
\times \, 
\big[\theta_{k,m}-\theta_{-k,m}\big](\tau, z_1-z_3)
\\[2mm]
&=&
\sum_{j > 0} \, \sum_{k=1}^{2mj}
q^{(m+1)j^2-\frac{k^2}{4m}}
e^{2\pi ij(z_1-z_2)+\pi i(2jm-k)(z_1-2z_2-z_3)} 
\\[-1mm]
& & \hspace{20mm}
\times \, 
\big[\theta_{k,m}-\theta_{-k,m}\big](\tau, z_1-z_3)
\\[3mm]
{\rm (II)}'' &:=& 
-\sum_{j < 0}q^{(m+1)j^2}
e^{2\pi ij(z_1-z_2)+2\pi ijm(z_1-2z_2-z_3)} 
\\[2mm]
& &
\times \, \sum_{k=-2mj}^{-1}
e^{\pi ik(-z_1+2z_2+z_3)} q^{-\frac{k^2}{4m}}
\big[\theta_{k,m}-\theta_{-k,m}\big](\tau, z_1-z_3)
\\[2mm]
&=& \quad \text{letting} \,\ (j,k) \, \rightarrow \, (-j,-k)
\\[2mm]
&=&
\sum_{j > 0} \, \sum_{k=1}^{2mj} \, 
q^{(m+1)j^2-\frac{k^2}{4m}}
e^{-2\pi ij(z_1-z_2)-\pi i(2jm-k)(z_1-2z_2-z_3)} 
\\[-1mm]
& & \hspace{20mm}
\times \,\ 
\big[\theta_{k,m}-\theta_{-k,m}\big](\tau, z_1-z_3)
\end{eqnarray*}}
Then we have
{\allowdisplaybreaks
\begin{eqnarray}
& & \hspace{-7mm}
{\rm (II)} \, = \, 
\theta_{0,m+1}\Big(\tau, \frac{z_1-z_2+m(z_1-2z_2-z_3)}{m+1}\Big)
\Phi^{[m,0]}(\tau, \, z_2, \, z_1-z_2-z_3, \, 0)
\nonumber
\\[2mm]
& &
+ \sum_{j > 0} \, \sum_{k=1}^{2mj}
q^{(m+1)j^2-\frac{k^2}{4m}}
\big\{e^{2\pi ij(z_1-z_2)+\pi i(2mj-k)(z_1-2z_2-z_3)} \hspace{20mm}
\nonumber
\\[1mm]
& & 
+ \, e^{-2\pi ij(z_1-z_2)-\pi i(2mj-k)(z_1-2z_2-z_3)}
\big\} 
\big[\theta_{k,m}-\theta_{-k,m}\big](\tau, z_1-z_3)
\label{n3-2:eqn:2022-317a2}
\end{eqnarray}}
\end{subequations}
Then, by \eqref{n3-2:eqn:2022-317a1} and \eqref{n3-2:eqn:2022-317a2}
and \eqref{n3-2:eqn:2022-316f}, the LHS of \eqref{n3-2:eqn:2022-316d}
is written as follows:
\begin{subequations}
{\allowdisplaybreaks
\begin{eqnarray}
& & \hspace{-7mm}
\text{LHS of \eqref{n3-2:eqn:2022-316d}} \,\ = \,\ 
\theta_{0, \, m+1}\Big(\tau, \, \frac{(z_1-z_2)+m(z_1+z_3)}{m+1}\Big)
\, \Phi^{[m,0]}(\tau, \, z_1, \, -z_3, \, 0) 
\nonumber
\\[2mm]
& & \hspace{-5mm}
- \,\ 
\theta_{0, \, m+1}\Big(\tau, \, \frac{(z_1-z_2)+m(z_1-2z_2-z_3)}{m+1}\Big)
\, \Phi^{[m,0]}(\tau, \, z_2, \, z_1-z_2-z_3, \, 0)
\nonumber
\\[2mm]
& & \hspace{-5mm}
+ \, \sum_{j \, > \, 0} \, \sum_{k=1}^{2mj} 
q^{(m+1)j^2-\frac{k^2}{4m}} \, P^{[m]}_{j,k}(z_1, z_2, z_3) \, 
\big[\theta_{k,m}-\theta_{-k,m}\big](\tau, z_1-z_3)
\label{n3-2:eqn:2022-317b1}
\end{eqnarray}}

\noindent
To compute the RHS of \eqref{n3-2:eqn:2022-316d}, 
we use Lemma 2.7 in \cite{W2022}: 
$$
\Phi^{[1,0]}(\tau, z_1, z_2,0) \,\ = \,\ - \, i \, 
\frac{\eta(\tau)^3 \, \vartheta_{11}(\tau, z_1+z_2)}{
\vartheta_{11}(\tau, z_1) \, \vartheta_{11}(\tau, z_2)}
$$
and the formula for the Mumford's $\vartheta$-function:
$$
\vartheta_{11}(\tau, z+n\tau) \, = \, 
(-1)^n q^{-\frac{n^2}{2}}e^{-2pi inz}\vartheta_{11}(\tau, z)
\hspace{10mm} \text{for} \quad n \in \zzz
$$
Using these formulas, we have 
{\allowdisplaybreaks
\begin{eqnarray*}
& & \hspace{-10mm}
\Phi^{[1,0]}(\tau, z_1, -z_2+2j\tau,0) 
\, = \, - \, i \, 
\frac{\eta(\tau)^3 \, \vartheta_{11}(\tau, z_1-z_2+2j\tau)
}{
\vartheta_{11}(\tau, z_1) \, \vartheta_{11}(\tau, -z_2+2j\tau)}
\\[2mm]
&=&
i \, e^{-4\pi ijz_1} \, 
\frac{\eta(\tau)^3 \, \vartheta_{11}(\tau, z_1-z_2)}{
\vartheta_{11}(\tau, z_1) \, \vartheta_{11}(\tau, z_2)}
\\[2mm]
& & \hspace{-10mm}
\Phi^{[1,0]}(\tau, z_3, z_1-z_2-z_3+2j\tau,0) 
\, = \, - \, i \, 
\frac{\eta(\tau)^3 \, \vartheta_{11}(\tau, z_1-z_2+2j\tau)
}{
\vartheta_{11}(\tau, z_3) \, \vartheta_{11}(\tau, z_1-z_2-z_3+2j\tau)}
\\[2mm]
&=&
- \, i \, e^{-4\pi ijz_3} \, 
\frac{\eta(\tau)^3 \, \vartheta_{11}(\tau, z_1-z_2)}{
\vartheta_{11}(\tau, z_3) \, \vartheta_{11}(\tau, z_1-z_2-z_3)}
\end{eqnarray*}}
so 
{\allowdisplaybreaks
\begin{eqnarray}
& & \hspace{-7mm}
\text{RHS of \eqref{n3-2:eqn:2022-316d}} 
\nonumber
\\[2mm]
&=&
\sum_{j \in \zzz}q^{(m+1)j^2}
e^{2\pi ij(z_1-z_2)-2\pi imj(z_1-z_3)} \times 
i \, e^{-4\pi ijz_1} \, 
\frac{\eta(\tau)^3 \, \vartheta_{11}(\tau, z_1-z_2)}{
\vartheta_{11}(\tau, z_1) \, \vartheta_{11}(\tau, z_2)}
\nonumber
\\[2mm]
& &+ \,\ \sum_{j \in \zzz}q^{(m+1)j^2}
e^{2\pi ij(z_1-z_2)+2\pi imj(z_1-z_3)} 
\nonumber
\\[-2mm]
& & \hspace{20mm}
\times \,\ i \, e^{-4\pi ijz_3} \, 
\frac{\eta(\tau)^3 \, \vartheta_{11}(\tau, z_1-z_2)}{
\vartheta_{11}(\tau, z_3) \, \vartheta_{11}(\tau, z_1-z_2-z_3)}
\nonumber
\\[2mm]
&=&
i \, 
\underbrace{\sum_{j \in \zzz}q^{(m+1)j^2}
e^{-2\pi ij(z_1+z_2)-2\pi imj(z_1-z_3)} }_{\substack{|| \\[0mm] 
{\displaystyle 
\theta_{0,m+1}\Big(\tau, - \frac{z_1+z_2+m(z_1-z_3)}{m+1}\Big)
}}}
\frac{\eta(\tau)^3 \, \vartheta_{11}(\tau, z_1-z_2)}{
\vartheta_{11}(\tau, z_1) \, \vartheta_{11}(\tau, z_2)}
\nonumber
\\[2mm]
& & \hspace{-5mm}
+ \, i \, 
\underbrace{\sum_{j \in \zzz}q^{(m+1)j^2}
e^{2\pi ij(z_1-z_2-2z_3)+2\pi imj(z_1-z_3)} }_{\substack{|| \\[0mm] 
{\displaystyle 
\theta_{0,m+1}\Big(\tau, \frac{z_1-z_2-2z_3+m(z_1-z_3)}{m+1}\Big)
}}}
\frac{\eta(\tau)^3 \, \vartheta_{11}(\tau, z_1-z_2)}{
\vartheta_{11}(\tau, z_3) \, \vartheta_{11}(\tau, z_1-z_2-z_3)}
\nonumber
\\[2mm]
&=&
i \, \theta_{0,m+1}\Big(\tau, \frac{z_1+z_2+m(z_1-z_3)}{m+1}\Big)
\frac{\eta(\tau)^3 \, \vartheta_{11}(\tau, z_1-z_2)}{
\vartheta_{11}(\tau, z_1) \, \vartheta_{11}(\tau, z_2)}
\nonumber
\\[2mm]
& & \hspace{-3mm}
+ \, i \, 
\theta_{0,m+1}\Big(\tau, \frac{z_1-z_2-2z_3+m(z_1-z_3)}{m+1}\Big)
\frac{\eta(\tau)^3 \, \vartheta_{11}(\tau, z_1-z_2)}{
\vartheta_{11}(\tau, z_3) \, \vartheta_{11}(\tau, z_1-z_2-z_3)}
\nonumber
\\[-0mm]
& &
\label{n3-2:eqn:2022-317b2}
\end{eqnarray}}
\end{subequations}
Then by \eqref{n3-2:eqn:2022-317b1} and \eqref{n3-2:eqn:2022-317b2}
we obtain \eqref{n3-2:eqn:2022-316e}, which completes proof of 
Proposition \ref{n3-2:prop:2022-316a}.
\end{proof}

\begin{prop} \quad 
\label{n3-2:prop:2022-314a}
For $m \in \nnn$, the following formula holds:
{\allowdisplaybreaks
\begin{eqnarray}
& &
\theta_{0, m+1}\Big(\tau, \frac{-\frac12+m(z_1+z_3)}{m+1} \Big) 
\nonumber
\\[2mm]
& &
\times \, \bigg\{
\Phi^{[m,0]}(\tau, \, z_1, \, -z_3, \, 0) 
-
\Phi^{[m,0]}\Big(\tau, \, z_1+\frac12, \, -z_3-\frac12, \, 0\Big) \bigg\}
\nonumber
\\[2mm]
&=&
2i \, \eta(\tau)^2 \eta(2\tau)^2 \Bigg\{- 
\frac{\displaystyle 
\theta_{0, m+1}\Big(\tau, \, \frac{\frac12+z_1+z_3}{m+1}+ z_1-z_3\Big)
}{\vartheta_{11}(\tau, z_1) \, \vartheta_{10}(\tau, z_1)} 
\nonumber
\\[2mm]
& & \hspace{30mm}
+ \, 
\frac{\displaystyle 
\theta_{0, m+1}\Big(\tau, \, \frac{\frac12+z_1+z_3}{m+1}- z_1+z_3\Big)
}{\vartheta_{11}(\tau, z_3) \, \vartheta_{10}(\tau, z_3)} \Bigg\}
\nonumber
\\[2mm]
& & \hspace{-3mm}
+ \, 2 \, \sum_{j=1}^{\infty} \sum_{r=1}^j 
\sum_{\substack{k=1 \\[1mm] k \, : \, {\rm odd}}}^{m-1}
(-1)^j q^{(m+1)j^2-\frac{1}{4m}(2m(j-r)+k)^2}
\nonumber
\\[2mm]
& & 
\times \, 
\big\{e^{\pi i(2mr-k)(z_1+z_3)}+e^{-\pi i(2mr-k)(z_1+z_3)}\big\}
\big[\theta_{k,m}-\theta_{-k,m}](\tau, z_1-z_3)
\nonumber
\\[2mm]
& & \hspace{-3mm}
- \, 2 \, \sum_{j=1}^{\infty} \sum_{r=0}^{j-1} 
\sum_{\substack{k=1 \\[1mm] k \, : \, {\rm odd}}}^{m-1}
(-1)^j q^{(m+1)j^2-\frac{1}{4m}(2m(j-r)-k)^2}
\nonumber
\\[2mm]
& & 
\times 
\big\{e^{\pi i(2mr+k)(z_1+z_3)}+e^{-\pi i(2mr+k)(z_1+z_3)}\big\}
\big[\theta_{k,m}-\theta_{-k,m}](\tau, z_1-z_3)
\nonumber
\\[0mm]
& &
\label{n3-2:eqn:2022-317c}
\end{eqnarray}}
\end{prop}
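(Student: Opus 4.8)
The plan is to obtain \eqref{n3-2:eqn:2022-317c} by specializing the general identity \eqref{n3-2:eqn:2022-316e} of Proposition \ref{n3-2:prop:2022-316a} to the locus $z_2 = z_1 + \tfrac12$, exploiting the hypothesis $m \in \nnn$ (as opposed to $m \in \tfrac12\nnn$). With this choice $z_1 - z_2 = -\tfrac12$, so the second mock-theta term $\Phi^{[m,0]}(\tau, z_2, z_1-z_2-z_3, 0)$ becomes $\Phi^{[m,0]}(\tau, z_1+\tfrac12, -z_3-\tfrac12, 0)$, which is precisely the second function in \eqref{n3-2:eqn:2022-317c}. First I would check that the two $\theta_{0,m+1}$-coefficients on the left of \eqref{n3-2:eqn:2022-316e} coincide after this substitution: their arguments are $A := \frac{-\frac12 + m(z_1+z_3)}{m+1}$ and $B := \frac{-\frac12 + m(z_1 - 2z_2 - z_3)}{m+1} = \frac{-\frac12 - m(z_1+z_3) - m}{m+1}$, and since $-B = A + 1$ while $\theta_{0,m+1}(\tau, \cdot)$ is even and $\tfrac{1}{m+1}$-periodic in its second slot, one gets $\theta_{0,m+1}(\tau, B) = \theta_{0,m+1}(\tau, A)$. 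Hence the left side collapses to the common factor $\theta_{0,m+1}(\tau, A)\big[\Phi^{[m,0]}(\tau, z_1, -z_3, 0) - \Phi^{[m,0]}(\tau, z_1+\tfrac12, -z_3-\tfrac12, 0)\big]$, i.e.\ exactly the left side of \eqref{n3-2:eqn:2022-317c}.

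Next I would evaluate the polynomial $P^{[m]}_{j,k}$ of \eqref{n3-2:eqn:2022-316f} at $z_2 = z_1 + \tfrac12$. Here integrality of $m$ is essential: $e^{2\pi ij(z_1-z_2)} = (-1)^j$, and since $e^{-2\pi i mj} = 1$ one has $e^{\pi i(2mj-k)(z_1 - 2z_2 - z_3)} = (-1)^k e^{-\pi i(2mj-k)(z_1+z_3)}$. A short computation then yields $P^{[m]}_{j,k}(z_1, z_1+\tfrac12, z_3) = (-1)^j\big((-1)^k - 1\big)\big[e^{\pi i(2mj-k)(z_1+z_3)} + e^{-\pi i(2mj-k)(z_1+z_3)}\big]$, which vanishes for even $k$ and equals $-2(-1)^j\big[e^{\pi i(2mj-k)(z_1+z_3)} + e^{-\pi i(2mj-k)(z_1+z_3)}\big]$ for odd $k$. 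This already explains the restriction to odd $k$ in \eqref{n3-2:eqn:2022-317c}; moving this specialized double sum from the left to the right of \eqref{n3-2:eqn:2022-316e} flips its sign and produces an overall coefficient $+2(-1)^j$.

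The main work, and the step I expect to be the chief obstacle, is re-indexing the resulting single sum over odd $k$ with $1 \le k \le 2mj$ into the two triple sums of \eqref{n3-2:eqn:2022-317c}, whose inner index runs only over odd $k$ with $1 \le k \le m-1$. For this I would use that $\theta_{k,m} - \theta_{-k,m}$ is $2m$-periodic and odd in $k$, writing each running index $K$ either as $K = 2m(j-r) + k$ or as $K = 2m(j-r) - k$ with $k \in \{1, \dots, m-1\}$ odd. In the first case $\theta_{K,m} - \theta_{-K,m} = \theta_{k,m} - \theta_{-k,m}$ and $2mj - K = 2mr - k$, and the constraints $1 \le K \le 2mj$ force $1 \le r \le j$, giving the first triple sum with coefficient $+2(-1)^j$ and exponent $-\frac{1}{4m}(2m(j-r)+k)^2$. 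In the second case $\theta_{K,m} - \theta_{-K,m} = -(\theta_{k,m} - \theta_{-k,m})$ and $2mj - K = 2mr + k$, while $1 \le K \le 2mj$ forces $0 \le r \le j-1$, giving the second triple sum with coefficient $-2(-1)^j$ and exponent $-\frac{1}{4m}(2m(j-r)-k)^2$. The residues $K \equiv m \pmod{2m}$ (possible only for odd $m$) drop out because $\theta_{m,m} - \theta_{-m,m} = 0$, matching the exclusion of $k = m$.

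Finally I would specialize the right side \eqref{n3-2:eqn:2022-317b2} of Proposition \ref{n3-2:prop:2022-316a} at $z_2 = z_1 + \tfrac12$. Using the half-period shift $\vartheta_{11}(\tau, z+\tfrac12) = -\vartheta_{10}(\tau, z)$, oddness of $\vartheta_{11}$, and the classical identity $\vartheta_{10}(\tau, 0) = 2\eta(2\tau)^2/\eta(\tau)$, the numerator $\eta(\tau)^3\vartheta_{11}(\tau, z_1-z_2) = \eta(\tau)^3\vartheta_{11}(\tau, -\tfrac12)$ becomes $2\eta(\tau)^2\eta(2\tau)^2$, and the two denominators turn into $\vartheta_{11}(\tau, z_1)\vartheta_{10}(\tau, z_1)$ and $\vartheta_{11}(\tau, z_3)\vartheta_{10}(\tau, z_3)$; evenness and $\tfrac{1}{m+1}$-periodicity of $\theta_{0,m+1}$ then rewrite its two arguments as $\frac{\frac12 + z_1 + z_3}{m+1} \pm (z_1 - z_3)$. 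Collecting this with the re-indexed double sums gives \eqref{n3-2:eqn:2022-317c}. The only genuinely delicate bookkeeping is tracking signs across the chosen normalization of the Mumford theta functions and pinning down the $r$-ranges in the re-indexing step.
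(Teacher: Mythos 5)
Your proposal is correct and follows essentially the same route as the paper: specialize Proposition \ref{n3-2:prop:2022-316a} at $z_2=z_1+\tfrac12$, observe that $P^{[m]}_{j,k}$ survives only for odd $k$ (using $m\in\nnn$), re-index the resulting sum over odd $k\le 2mj$ via residues modulo $2m$ into the two triple sums, and evaluate the right-hand side with the half-period theta identities. Your re-indexing parametrizes the running index $K$ directly rather than $2mj-K$ as the paper does, and you are in fact slightly more careful than the paper in noting explicitly that the residue class $K\equiv m\ (\mathrm{mod}\ 2m)$ drops out because $\theta_{m,m}-\theta_{-m,m}=0$.
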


\begin{proof} In order to prove this proposition, we let 
$z_2=z_1+\frac12$ in the formula \eqref{n3-2:eqn:2022-316e}.
Since 
$$\left\{
\begin{array}{ccl}
z_1-2z_2-z_3 &=&-1-(z_2+z_3) \\[2mm]
\dfrac{z_1-z_2+m(z_1-2z_2-z_3)}{m+1} &=&
-1+\dfrac{\frac12-m(z_1+z_3)}{m+1}
\end{array}\right. \, ,
$$
the 1st and 2nd terms in the LHS of \eqref{n3-2:eqn:2022-316e} 
becomes as follows:
{\allowdisplaybreaks
\begin{eqnarray*}
& & \hspace{-10mm}
{\rm (I)} \, \overset{\substack{put \\[0.7mm] }}{:=} \, 
\text{(1st term}+ \text{2nd term) in LHS of \eqref{n3-2:eqn:2022-316e}}
\big|_{z_2=z_1+\frac12}
\\[2mm]
&=&
\theta_{0,m+1}\Big(\tau, \frac{-\frac12+m(z_1+z_3)}{m+1}\Big)
\Phi^{[m,0]}(\tau, z_1, -z_3,0)
\\[2mm]
& &
- 
\underbrace{
\theta_{0,m+1}\Big(\tau, -1+\frac{\frac12-m(z_1+z_3)}{m+1}\Big)}_{
\substack{|| \\[0mm] {\displaystyle 
\theta_{0,m+1}\Big(\tau, \frac{\frac12-m(z_1+z_3)}{m+1}\Big)
}}}
\Phi^{[m,0]}\Big(\tau, z_1+\frac12, -z_3-\frac12,0\Big)
\\[2mm]
&=&
\theta_{0,m+1}\Big(\tau, \frac{-\frac12+m(z_1+z_3)}{m+1}\Big)
\\[2mm]
& &
\times \bigg\{
\Phi^{[m,0]}(\tau, z_1, -z_3,0)
-
\Phi^{[m,0]}\Big(\tau, z_1+\frac12, -z_3-\frac12,0\Big)\bigg\}
\end{eqnarray*}}

Next we compute the 3rd term in the LHS of \eqref{n3-2:eqn:2022-316e}.
Since, by easy calculation, we have 
{\allowdisplaybreaks
\begin{eqnarray*}
& &
P^{[m]}_{j,k}(z_1,z_2,z_3)\big|_{z_2=z_1+\frac12}
\\[2mm]
&=&
(-1)^j\big\{(-1)^{2mj+k}-1\big\}
\big\{e^{\pi i(2mj-k)(z_1+z_3)}+e^{-\pi i(2mj-k)(z_1+z_3)}\big\} \, ,
\end{eqnarray*}}
we see that $P^{[m]}_{j,k}(z_1,z_2,z_3)\big|_{z_2=z_1+\frac12}$ is 
$\ne 0$ only if $k$ is odd. Then we have
{\allowdisplaybreaks
\begin{eqnarray*}
& & \hspace{-10mm}
{\rm (II)} \, \overset{\substack{put \\[0.7mm]}}{:=} \, 
\text{the 3rd term in LHS of \eqref{n3-2:eqn:2022-316e}}\big|_{z_2=z_1+\frac12}
\\[0mm]
&=&
- \, 2 \, 
\sum_{j>0}\sum_{\substack{k=1 \\[1mm] k \, : \, {\rm odd}}}^{2mj}
(-1)^j q^{(m+1)j^2-\frac{k^2}{4m}}
\\[2mm]
& & 
\times \, 
\big\{e^{\pi i(2mj-k)(z_1+z_3)}+e^{-\pi i(2mj-k)(z_1+z_3)}\big\}
\big[\theta_{k,m}-\theta_{-k,m}](\tau, z_1-z_3)
\\[2mm]
&=&
2 \, 
\sum_{j>0}\sum_{\substack{k=1 \\[1mm] k \, : \, {\rm odd}}}^{2mj}
(-1)^j q^{(m+1)j^2-\frac{k^2}{4m}}
\\[2mm]
& & \hspace{-10mm}
\times \, 
\big\{e^{\pi i(2mj-k)(z_1+z_3)}+e^{-\pi i(2mj-k)(z_1+z_3)}\big\}
\big[\theta_{2mj-k,m}-\theta_{-(2mj-k),m}](\tau, z_1-z_3)
\end{eqnarray*}}
Replacing $2mj-k$ with $(2r-1)m + s$ and $2(r-1)m+s$ 
($1 \leq r \leq j$ and $1\leq s<m$), this is rewritten as 
$$
{\rm (II)} \, = \, {\rm (II)}_A+ {\rm (II)}_B
$$
\noindent
where
{\allowdisplaybreaks
\begin{eqnarray*}
& & \hspace{-8mm}
{\rm (II)}_A \,\ := \,\ {\rm (II)}_{2mj-k \, = \, (2r-1)m+s}
\\[2mm]
&=&
2 \, \sum_{j=1}^{\infty} \sum_{r=1}^j 
\sum_{\substack{s=1 \\[1mm] s-m \, : \, {\rm odd}}}^{m-1}
(-1)^j q^{(m+1)j^2-\frac{1}{4m}(2m(j-r)+m-s)^2}
\\[2mm]
& & 
\times \, 
\big\{e^{\pi i((2r-1)m+s)(z_1+z_3)}+e^{-\pi i((2r-1)m+s)(z_1+z_3)}\big\}
\\[2mm]
& &
\times \,
\big[\theta_{(2r-1)m+s,m}-\theta_{-(2r-1)m-s,m}](\tau, z_1-z_3)
\\[2mm]
&=&
- \, 2 \, \sum_{j=1}^{\infty} \sum_{r=1}^j 
\sum_{\substack{s=1 \\[1mm] s+m \, : \, {\rm odd}}}^{m-1}
(-1)^j q^{(m+1)j^2-\frac{1}{4m}(2m(j-r)+m-s)^2}
\\[2mm]
& & 
\times \, 
\big\{e^{\pi i(2mr-m+s)(z_1+z_3)}+e^{-\pi i(2mr-m+s)(z_1+z_3)}\big\}
\\[2mm]
& &
\times \,
\big[\theta_{m-s,m}-\theta_{-(m-s),m}](\tau, z_1-z_3)
\\[2mm]
&=& \hspace{5mm} \text{putting} \quad m-s=:k
\\[2mm]
&=&
- \, 2 \, \sum_{j=1}^{\infty} \sum_{r=1}^j 
\sum_{\substack{k=1 \\[1mm] k \, : \, {\rm odd}}}^{m-1}
(-1)^j q^{(m+1)j^2-\frac{1}{4m}(2m(j-r)+k)^2}
\\[2mm]
& & 
\times \, 
\big\{e^{\pi i(2mr-k)(z_1+z_3)}+e^{-\pi i(2mr-k)(z_1+z_3)}\big\}
\big[\theta_{k,m}-\theta_{-k,m}](\tau, z_1-z_3)
\\[3mm]
& & \hspace{-8mm}
{\rm (II)}_B \,\ := \,\ {\rm (II)}_{2mj-k \, = \, 2(r-1)m+s}
\\[2mm]
&=&
2 \, \sum_{j=1}^{\infty} \sum_{r=1}^j 
\sum_{\substack{s=1 \\[1mm] s+m \, : \, {\rm odd}}}^{m-1}
(-1)^j q^{(m+1)j^2-\frac{1}{4m}(2m(j-r+1)-s)^2}
\\[2mm]
& & 
\times \, 
\big\{e^{\pi i(2(r-1)m+s)(z_1+z_3)}+e^{-\pi i(2(r-1)m+s)(z_1+z_3)}\big\}
\\[2mm]
& &
\times \,
\big[\theta_{2(r-1)m+s,m}-\theta_{-2(r-1)m-s,m}](\tau, z_1-z_3)
\\[2mm]
&=&
2 \, \sum_{j=1}^{\infty} \sum_{r=1}^j 
\sum_{\substack{s=1 \\[1mm] s \, : \, {\rm odd}}}^{m-1}
(-1)^j q^{(m+1)j^2-\frac{1}{4m}(2m(j-r+1)-s)^2}
\\[2mm]
& & \hspace{-7mm}
\times 
\big\{e^{\pi i(2m(r-1)+s)(z_1+z_3)}+e^{-\pi i(2m(r-1)+s)(z_1+z_3)}\big\}
\big[\theta_{s,m}-\theta_{-s,m}](\tau, z_1-z_3)
\\[2mm]
&=& \hspace{5mm} \text{replacing} \,\ r-1 \,\ \text{with} \,\ r
\\[2mm]
&=&
2 \, \sum_{j=1}^{\infty} \sum_{r=0}^{j-1} 
\sum_{\substack{k=1 \\[1mm] k \, : \, {\rm odd}}}^{m-1}
(-1)^j q^{(m+1)j^2-\frac{1}{4m}(2m(j-r)-k)^2}
\\[2mm]
& & 
\times 
\big\{e^{\pi i(2mr+k)(z_1+z_3)}+e^{-\pi i(2mr+k)(z_1+z_3)}\big\}
\big[\theta_{k,m}-\theta_{-k,m}](\tau, z_1-z_3)
\end{eqnarray*}}

\noindent
So (II) becomes as follows:
{\allowdisplaybreaks
\begin{eqnarray*}
& &
{\rm (II)} \,\ = \,\ 
- \, 2 \, \sum_{j=1}^{\infty} \sum_{r=1}^j 
\sum_{\substack{k=1 \\[1mm] k \, : \, {\rm odd}}}^{m-1}
(-1)^j q^{(m+1)j^2-\frac{1}{4m}(2m(j-r)+k)^2}
\\[2mm]
& & 
\times \, 
\big\{e^{\pi i(2mr-k)(z_1+z_3)}+e^{-\pi i(2mr-k)(z_1+z_3)}\big\}
\big[\theta_{k,m}-\theta_{-k,m}](\tau, z_1-z_3)
\\[2mm]
& &
+ \, 2 \, \sum_{j=1}^{\infty} \sum_{r=0}^{j-1} 
\sum_{\substack{k=1 \\[1mm] k \, : \, {\rm odd}}}^{m-1}
(-1)^j q^{(m+1)j^2-\frac{1}{4m}(2m(j-r)-k)^2}
\\[2mm]
& & 
\times 
\big\{e^{\pi i(2mr+k)(z_1+z_3)}+e^{-\pi i(2mr+k)(z_1+z_3)}\big\}
\big[\theta_{k,m}-\theta_{-k,m}](\tau, z_1-z_3)
\end{eqnarray*}}

\vspace{-3mm}

\noindent
Next we compute the RHS of \eqref{n3-2:eqn:2022-316e} in the case when 
$z_2=z_1+\frac12$ :
{\allowdisplaybreaks
\begin{eqnarray*}
& &\hspace{-10mm}
{\rm (III)} \,\ \overset{\substack{put \\[0.7mm]}}{:=} \,\ 
\text{RHS of \eqref{n3-2:eqn:2022-316e}}\big|_{z_2=z_1+\frac12}
\\[2mm]
&=&
i \, \theta_{0,m+1}\Big(\tau, \frac{2z_1+\frac12+m(z_1-z_3)}{m+1}\Big)
\frac{\eta(\tau)^3 \, \vartheta_{11}(\tau, -\frac12)}{
\vartheta_{11}(\tau, z_1)\vartheta_{11}(\tau, z_1+\frac12)}
\\[2mm]
& &
+ \, i \, \theta_{0,m+1}
\Big(\tau, \frac{-\frac12-2z_3+m(z_1-z_3)}{m+1}\Big)
\frac{\eta(\tau)^3 \, \vartheta_{11}(\tau, -\frac12)}{
\vartheta_{11}(\tau, z_1)\vartheta_{11}(\tau, -\frac12-z_3)}
\\[2mm]
&=&
2i \, \eta(\tau)^2\eta(2\tau)^2 \, \Bigg\{
- \,\ \frac{
\theta_{0,m+1}\big(\tau, \, \frac{\frac12+z_1+z_3}{m+1}+z_1-z_3\big)
}{\vartheta_{11}(\tau, z_1) \vartheta_{10}(\tau, z_1)}
\\[2mm]
& & \hspace{26mm}
+ \,\ \frac{
\theta_{0,m+1}\big(\tau, \, -\frac{\frac12+z_1+z_3}{m+1}+z_1-z_3\big)
}{\vartheta_{11}(\tau, z_3) \vartheta_{10}(\tau, z_3)} \Bigg\}
\\[2mm]
&=&
2i \, \eta(2\tau)^3 \, \Bigg\{
- \,\ \frac{
\theta_{0,m+1}\big(\tau, \, \frac{\frac12+z_1+z_3}{m+1}+z_1-z_3\big)
}{\vartheta_{11}(2\tau, 2z_1)}
\\[2mm]
& & \hspace{26mm}
+ \,\ \frac{
\theta_{0,m+1}\big(\tau, \, -\frac{\frac12+z_1+z_3}{m+1}+z_1-z_3\big)
}{\vartheta_{11}(2\tau, 2z_3)} \Bigg\}
\end{eqnarray*}}
where we used \, $\vartheta_{11}(\tau, -\frac12) 
=2\frac{\eta(2\tau)^2}{\eta(\tau)}$ \, and \, 
$\vartheta_{11}(\tau, z+\frac12) =-\vartheta_{10}(\tau,z)$ \, and
$$
\vartheta_{11}(\tau, z)\vartheta_{10}(\tau, z) \, = \, 
\frac{\eta(\tau)^2}{\eta(2\tau)} \, \vartheta_{11}(2\tau, 2z) \, .
$$

\medskip

Then, by ${\rm (I)}={\rm (III)}-{\rm (II)}$, we obtain 
\eqref{n3-2:eqn:2022-317c}, 
which completes proof of Proposition \ref{n3-2:prop:2022-314a}.
\end{proof}

\begin{cor} 
\label{n3-2:cor:2022-314a}
For $m \in \frac12 \nnn$ and $s \in \frac12 \nnn_{\rm odd}$
the following formulas hold:
\begin{enumerate}
\item[{\rm 1)}] \quad $\theta_{0, 2m+1}
\Big(\tau, \dfrac{-\frac12+2m(z_1-z_2)}{2m+1} \Big) \, 
\Phi^{[m, \frac12]}(2\tau, \, 2z_1, \, 2z_2, \, 0)$
{\allowdisplaybreaks
\begin{eqnarray}
& & \hspace{-10mm}
= \,\ 
-i \, \eta(2\tau)^3 \, \Bigg\{ \, 
\frac{\displaystyle 
\theta_{0, 2m+1}\Big(\tau, \, \frac{\frac12+z_1-z_2}{2m+1}+ z_1+z_2\Big)
}{\vartheta_{11}(2\tau, 2z_1)} 
\nonumber
\\[2mm]
& & \hspace{15mm}
+ \, 
\frac{\displaystyle 
\theta_{0, 2m+1}\Big(\tau, \, \frac{\frac12+z_1-z_2}{2m+1}- z_1-z_2\Big)
}{\vartheta_{11}(2\tau, 2z_2)} \Bigg\}
\nonumber
\\[1mm]
& & \hspace{-10mm}
+ \, \sum_{j=1}^{\infty} \sum_{r=1}^j 
\sum_{\substack{k=1 \\[1mm] k \, : \, {\rm odd}}}^{2m-1}
(-1)^j q^{(2m+1)j^2-\frac{1}{8m}(4m(j-r)+k)^2}
\nonumber
\\[2mm]
& & \hspace{-7mm}
\times \, 
\big\{e^{\pi i(4mr-k)(z_1-z_2)}+e^{-\pi i(4mr-k)(z_1-z_2)}\big\}
\big[\theta_{k,2m}-\theta_{-k,2m}](\tau, z_1+z_2)
\nonumber
\\[2mm]
& & \hspace{-10mm}
- \, \sum_{j=1}^{\infty} \sum_{r=0}^{j-1} 
\sum_{\substack{k=1 \\[1mm] k \, : \, {\rm odd}}}^{2m-1}
(-1)^j q^{(2m+1)j^2-\frac{1}{8m}(4m(j-r)-k)^2}
\nonumber
\\[2mm]
& & \hspace{-7mm}
\times \, 
\big\{e^{\pi i(4mr+k)(z_1-z_2)}+e^{-\pi i(4mr+k)(z_1-z_2)}\big\}
\big[\theta_{k,2m}-\theta_{-k,2m}](\tau, z_1+z_2)
\nonumber
\\[0mm]
& &
\label{n3-2:eqn:2022-325a}
\end{eqnarray}}
\item[{\rm 2)}] \quad $\theta_{0, 2m+1}
\Big(\tau, \dfrac{-\frac12+2m(z_1-z_2)}{2m+1} \Big) \, 
\Phi^{[m, s]}(2\tau, \, 2z_1, \, 2z_2, \, 0)$
{\allowdisplaybreaks
\begin{eqnarray*}
& & \hspace{-5mm}
= \,\ 
-i \, \eta(2\tau)^3 \Bigg\{ 
\frac{\displaystyle 
\theta_{0, 2m+1}\Big(\tau, \, \frac{\frac12+z_1-z_2}{2m+1}+ z_1+z_2\Big)
}{\vartheta_{11}(2\tau, 2z_1)} 
\\[2mm]
& & \hspace{20mm}
+ \,\ 
\frac{\displaystyle 
\theta_{0, 2m+1}\Big(\tau, \, \frac{\frac12+z_1-z_2}{2m+1}- z_1-z_2\Big)
}{\vartheta_{11}(2\tau, 2z_2)} \Bigg\}
\\[1mm]
& & \hspace{-5mm}
+ \, \sum_{j=1}^{\infty} \sum_{r=1}^j 
\sum_{\substack{k=1 \\[1mm] k \, : \, {\rm odd}}}^{2m-1}
(-1)^j q^{(2m+1)j^2-\frac{1}{8m}(4m(j-r)+k)^2}
\\[2mm]
& & 
\times \, 
\big\{e^{\pi i(4mr-k)(z_1-z_2)}+e^{-\pi i(4mr-k)(z_1-z_2)}\big\}
\big[\theta_{k,2m}-\theta_{-k,2m}](\tau, z_1+z_2)
\\[2mm]
& & \hspace{-5mm}
- \, \sum_{j=1}^{\infty} \sum_{r=0}^{j-1} 
\sum_{\substack{k=1 \\[1mm] k \, : \, {\rm odd}}}^{2m-1}
(-1)^j q^{(2m+1)j^2-\frac{1}{8m}(4m(j-r)-k)^2}
\\[1mm]
& & \hspace{-2mm}
\times 
\big\{e^{\pi i(4mr+k)(z_1-z_2)}+e^{-\pi i(4mr+k)(z_1-z_2)}\big\}
\big[\theta_{k,2m}-\theta_{-k,2m}](\tau, z_1+z_2)
\\[3mm]
& & \hspace{-5mm}
- \,\ \theta_{0, 2m+1}\Big(\tau, \frac{-\frac12+2m(z_1-z_2)}{2m+1} \Big) 
\\[0mm]
& & \hspace{-2mm}
\times \sum_{k=0}^{s-\frac32}
e^{\pi i (1+2k)(z_1-z_2)} \, 
q^{- \, \frac{(1+2k)^2}{8m}} \, 
\big[\theta_{\frac12+k, m} \, - \, \theta_{-(\frac12+k), m}\big]
(2\tau, \, 2(z_1+z_2))
\end{eqnarray*}}
\end{enumerate}
\end{cor}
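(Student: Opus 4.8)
The plan is to deduce both formulas from Proposition~\ref{n3-2:prop:2022-314a} applied with its parameter $m$ replaced by $2m$ (legitimate, since $m\in\frac12\nnn$ forces $2m\in\nnn$), the bridge being a \emph{doubling identity} that rewrites the difference of two $\Phi^{[2m,0]}$'s appearing on the left of \eqref{n3-2:eqn:2022-317c} as a single $\Phi^{[m,\frac12]}$ evaluated at modular parameter $2\tau$. Part~2) will then follow from part~1) by the index-shift relation of Lemma~\ref{n3-2:lemma:2022-312a}.

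First I would establish, directly from the definitions \eqref{n3-2:eqn:2022-111a}--\eqref{n3-2:eqn:2022-111c}, the identity
\begin{equation}
\Phi^{[2m,0]}(\tau, z_1, z_2, 0) - \Phi^{[2m,0]}\big(\tau, z_1+\tfrac12, z_2-\tfrac12, 0\big) = 2\,\Phi^{[m,\frac12]}(2\tau, 2z_1, 2z_2, 0).
\label{plan:doubling}
\end{equation}
The point is that the combined shift $(z_1,z_2)\mapsto(z_1+\frac12,z_2-\frac12)$ leaves $z_1+z_2$ fixed, so the numerators of $\Phi^{[2m,0]}_i$ are unchanged, while in each denominator $e^{2\pi iz_1}\mapsto -e^{2\pi iz_1}$ (resp.\ $e^{-2\pi iz_2}\mapsto -e^{-2\pi iz_2}$). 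Subtracting and using $\frac{1}{1-x}-\frac{1}{1+x}=\frac{2x}{1-x^2}$ produces, for the $i=1$ component, $2e^{2\pi iz_1}\sum_{j\in\zzz}\frac{e^{4\pi imj(z_1+z_2)}q^{2mj^2+j}}{1-e^{4\pi iz_1}q^{2j}}$, which is exactly $2\,\Phi^{[m,\frac12]}_1(2\tau,2z_1,2z_2,0)$ once one notes $(q^2)^{mj^2+j/2}=q^{2mj^2+j}$; the same computation for $i=2$ gives $2\,\Phi^{[m,\frac12]}_2(2\tau,2z_1,2z_2,0)$, and \eqref{plan:doubling} follows by subtraction.

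Next I would specialize Proposition~\ref{n3-2:prop:2022-314a} (with $m$ replaced by $2m$) to $z_3=-z_2$. The left-hand coefficient $\theta_{0,2m+1}\big(\tau,\frac{-\frac12+2m(z_1+z_3)}{2m+1}\big)$ becomes $\theta_{0,2m+1}\big(\tau,\frac{-\frac12+2m(z_1-z_2)}{2m+1}\big)$, and the bracketed difference is precisely the left side of \eqref{plan:doubling} with second argument $-z_3=z_2$; hence the whole left-hand side equals $2\,\theta_{0,2m+1}(\cdots)\,\Phi^{[m,\frac12]}(2\tau,2z_1,2z_2,0)$, i.e.\ twice the left side of \eqref{n3-2:eqn:2022-325a}. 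On the right of \eqref{n3-2:eqn:2022-317c} I would use $z_1+z_3\mapsto z_1-z_2$, $z_1-z_3\mapsto z_1+z_2$, $2z_3\mapsto -2z_2$; converting the $\eta$-$\vartheta$ prefactors by $\vartheta_{11}(\tau,z)\vartheta_{10}(\tau,z)=\frac{\eta(\tau)^2}{\eta(2\tau)}\vartheta_{11}(2\tau,2z)$ and invoking that $\theta_{0,2m+1}(\tau,\cdot)$ is even while $\vartheta_{11}(2\tau,\cdot)$ is odd (so $\vartheta_{11}(2\tau,-2z_2)=-\vartheta_{11}(2\tau,2z_2)$) turns that block into $2$ times the stated $-i\,\eta(2\tau)^3\{\cdots\}$ of \eqref{n3-2:eqn:2022-325a}, while the double sums transform term-by-term ($m+1\mapsto 2m+1$, $\frac{1}{4m}\mapsto\frac{1}{8m}$, $2mr\mapsto 4mr$, $\theta_{k,m}\mapsto\theta_{k,2m}$, upper $k$-limit $m-1\mapsto 2m-1$) into $2$ times the stated double sums. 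Dividing by $2$ yields part~1).

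For part~2), I would invoke Lemma~\ref{n3-2:lemma:2022-312a}, part~1)\,(iii) with base index $\tfrac12$ and jump $s-\tfrac12\in\nnn$, and then replace $(\tau,z_1,z_2)$ by $(2\tau,2z_1,2z_2)$; using $q\mapsto q^2$, $e^{\pi i(\frac12+k)\cdot 2(z_1-z_2)}=e^{\pi i(1+2k)(z_1-z_2)}$ and $(\frac12+k)^2=\frac{(1+2k)^2}{4}$ this gives
\begin{equation}
\Phi^{[m,s]}(2\tau,2z_1,2z_2,0)=\Phi^{[m,\frac12]}(2\tau,2z_1,2z_2,0)-\sum_{k=0}^{s-\frac32}e^{\pi i(1+2k)(z_1-z_2)}q^{-\frac{(1+2k)^2}{8m}}\big[\theta_{\frac12+k,m}-\theta_{-(\frac12+k),m}\big]\big(2\tau,2(z_1+z_2)\big).
\label{plan:shift}
\end{equation}
Multiplying \eqref{plan:shift} by $\theta_{0,2m+1}\big(\tau,\frac{-\frac12+2m(z_1-z_2)}{2m+1}\big)$ and substituting part~1) for the $\Phi^{[m,\frac12]}$ term reproduces the right-hand side of 2), its last two lines being the multiplied-out correction sum. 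I expect the only delicate point to be the bookkeeping in the previous paragraph: matching signs and arguments in the $\eta$-$\vartheta$ block after the parity flip $\vartheta_{11}(2\tau,-2z_2)=-\vartheta_{11}(2\tau,2z_2)$ and confirming that the reindexing of the two double sums reproduces the stated ranges and phases; the identities \eqref{plan:doubling} and \eqref{plan:shift} are themselves routine term-by-term computations.
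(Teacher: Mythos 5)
Your proposal is correct and takes essentially the same route as the paper: your doubling identity is precisely the paper's equation \eqref{n3-2:eqn:2022-317e} (which the paper imports from Lemma 2.5 of \cite{W2022} together with \eqref{n3-2:eqn:2022-331d}, while you verify it directly from the definitions), and both arguments then obtain 1) by specializing Proposition \ref{n3-2:prop:2022-314a} with $m$ replaced by $2m$ and $z_3=-z_2$, and deduce 2) from 1) via Lemma \ref{n3-2:lemma:2022-312a}. The only deviation --- your self-contained verification of the doubling identity instead of the citation --- is a routine term-by-term computation that checks out, so there is no gap.
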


\begin{proof} By Lemma 2.5 in \cite{W2022} and \eqref{n3-2:eqn:2022-331d}, 
we have
{\allowdisplaybreaks
\begin{eqnarray}
& & \hspace{-7mm}
\Phi^{[m, \frac12]}(2\tau, \, 2z_1, \, 2z_2, \, 0)
\nonumber
\\[2mm]
&=&
\frac12 \, \bigg\{
\Phi^{[2m,1]}(\tau, z_1,z_2,0) \, - \, 
\Phi^{[2m,1]}\Big(\tau, z_1+\frac12,z_2-\frac12,0\Big) \bigg\}
\nonumber
\\[2mm]
&=&
\frac12 \, \bigg\{
\Phi^{[2m,0]}(\tau, z_1,z_2,0) \, - \, 
\Phi^{[2m,0]}\Big(\tau, z_1+\frac12,z_2-\frac12,0\Big) \bigg\}
\label{n3-2:eqn:2022-317e}
\end{eqnarray}}
Then 1) follows from \eqref{n3-2:eqn:2022-317e} and Proposition 
\ref{n3-2:prop:2022-314a}, and 2) follows from 1) and 
Lemma \ref{n3-2:lemma:2022-312a}.
\end{proof}

We note that, by using the function $\theta^{(-)}_{k,m}(\tau, z)$ 
defined by
\begin{equation}
\theta^{(-)}_{k,m}(\tau, z) \,\ := \,\ 
\sum_{j \in \zzz}(-1)^j \, e^{2\pi im(j+\frac{k}{2m})z} \, 
q^{m(j+\frac{k}{2m})^2} \, ,
\label{n3-2:eqn:2022-325b}
\end{equation}
the formula \eqref{n3-2:eqn:2022-325a} is rewritten as follows:
{\allowdisplaybreaks
\begin{eqnarray}
& &
\theta_{0, 2m+1}^{(-)}
\Big(\tau, \dfrac{2m(z_1-z_2)}{2m+1} \Big) \, 
\Phi^{[m, \frac12]}(2\tau, \, 2z_1, \, 2z_2, \, 0)
\nonumber
\\[2mm]
& & \hspace{-5mm}
= 
-i \eta(2\tau)^3 \Bigg\{
\frac{\displaystyle 
\theta_{0, 2m+1}^{(-)}\Big(\tau, \, \frac{z_1-z_2}{2m+1}+ z_1+z_2\Big)
}{\vartheta_{11}(2\tau, 2z_1)} 
+ 
\frac{\displaystyle 
\theta_{0, 2m+1}^{(-)}\Big(\tau, \, \frac{z_1-z_2}{2m+1}- z_1-z_2\Big)
}{\vartheta_{11}(2\tau, 2z_2)} \Bigg\}
\nonumber
\\[2mm]
& & \hspace{-5mm}
+ \, \sum_{j=1}^{\infty} \sum_{r=1}^j 
\sum_{\substack{k=1 \\[1mm] k \, : \, {\rm odd}}}^{2m-1}
(-1)^j q^{(2m+1)j^2-\frac{1}{8m}(4m(j-r)+k)^2}
\nonumber
\\[2mm]
& & 
\times \, 
\big\{e^{\pi i(4mr-k)(z_1-z_2)}+e^{-\pi i(4mr-k)(z_1-z_2)}\big\}
\big[\theta_{k,2m}-\theta_{-k,2m}](\tau, z_1+z_2)
\nonumber
\\[2mm]
& & \hspace{-5mm}
- \, \sum_{j=1}^{\infty} \sum_{r=0}^{j-1} 
\sum_{\substack{k=1 \\[1mm] k \, : \, {\rm odd}}}^{2m-1}
(-1)^j q^{(2m+1)j^2-\frac{1}{8m}(4m(j-r)-k)^2}
\nonumber
\\[2mm]
& & 
\times 
\big\{e^{\pi i(4mr+k)(z_1-z_2)}+e^{-\pi i(4mr+k)(z_1-z_2)}\big\}
\big[\theta_{k,2m}-\theta_{-k,2m}](\tau, z_1+z_2)
\nonumber
\\[0mm]
& &
\label{n3-2:eqn:2022-325c}
\end{eqnarray}}

By Corollary \ref{n3-2:cor:2022-314a} and Lemma \ref{n3-2:lemma:2022-312a},
we obtain an explicit formula for \\
$\Phi^{[m,s]}(\tau, z_1, z_2,t)$ when $s \in \frac12 \zzz_{\rm odd}$. 
We now consider $\Phi^{[m,s]}$ when $s \in \zzz$. For this sake, 
we note the following lemma:

\begin{lemma} \quad 
\label{n3-2:lemma:2022-324a}
For $m \in \frac12\nnn_{\rm odd}$, the following formula holds:
{\allowdisplaybreaks
\begin{eqnarray*}
& & \hspace{-12mm}
\Phi^{[m, 0]}(\tau, z_1,z_2,0)
\,\ = \,\ 
e^{2\pi imz_1} \, \Phi^{[m, \frac12]}(\tau, z_1,z_2+\tau,0)
\\[2mm]
& &
+ \, 
\sum_{k=1}^{m-\frac12}e^{\pi ik(z_1-z_2)} \, 
q^{-\frac{k^2}{4m}} \, 
\big[\theta_{k,m}-\theta_{-k,m}\big](\tau, z_1+z_2)
\end{eqnarray*}}
\end{lemma}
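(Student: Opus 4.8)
The plan is to deduce the identity purely by chaining the two quasi-periodicity/recurrence lemmas already established, exploiting the fact that the hypothesis $m \in \tfrac12\nnn_{\rm odd}$ forces $m+\tfrac12$ to be a positive integer. First I would dispose of the transcendental term on the right-hand side. Applying Lemma \ref{n3-2:lemma:2022-324b} (part 2) with $p=1$ and $s=\tfrac12$ gives
$$
\Phi^{[m,\frac12]}(\tau, z_1, z_2+\tau, 0) \, = \, e^{-2\pi imz_1}\,\Phi^{[m,\frac12+m]}(\tau, z_1, z_2, 0),
$$
so that $e^{2\pi imz_1}\Phi^{[m,\frac12]}(\tau, z_1, z_2+\tau, 0)=\Phi^{[m,m+\frac12]}(\tau, z_1, z_2, 0)$. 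This converts the target into a pure statement about the recurrence in the second index $s$.

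Next I would invoke Lemma \ref{n3-2:lemma:2022-312a}, part 1)(iii), with base index $s=0$ and integer shift $j=m+\tfrac12$. The key observation is that $m\in\tfrac12\nnn_{\rm odd}$ means $m=\tfrac{2\ell+1}{2}$ for some integer $\ell\ge 0$, whence $j=m+\tfrac12=\ell+1\in\nnn$ is a genuine positive integer and the lemma applies, yielding
$$
\Phi^{[m,m+\frac12]}(\tau, z_1, z_2, 0)=\Phi^{[m,0]}(\tau, z_1, z_2, 0)-\sum_{k=0}^{m-\frac12}e^{\pi ik(z_1-z_2)}q^{-\frac{k^2}{4m}}\big[\theta_{k,m}-\theta_{-k,m}\big](\tau, z_1+z_2).
$$
The $k=0$ summand vanishes identically since $\theta_{0,m}-\theta_{-0,m}=0$, so the sum may be taken over $1\le k\le m-\tfrac12$.

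Finally I would solve this last equation for $\Phi^{[m,0]}$ and substitute the expression for $\Phi^{[m,m+\frac12]}$ obtained in the first step, which immediately produces the stated formula with the sum running from $k=1$ to $k=m-\tfrac12$.

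I do not anticipate any serious analytic obstacle: the whole argument is an algebraic composition of the two earlier lemmas. The only points requiring care are bookkeeping ones, namely verifying that $m+\tfrac12$ is indeed a positive integer (so that both the integer shift in Lemma \ref{n3-2:lemma:2022-312a} and the resulting upper summation limit $m-\tfrac12$ come out correctly) and noting the vanishing of the $k=0$ term to align the summation range with the statement. I expect this parity/integrality check to be the single place where the hypothesis $m\in\tfrac12\nnn_{\rm odd}$, as opposed to general $m\in\tfrac12\nnn$, is genuinely used.
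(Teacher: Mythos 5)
Your proposal is correct and follows essentially the same route as the paper's own proof: the paper also combines Lemma \ref{n3-2:lemma:2022-324b} (with $s=\tfrac12$, $p=1$, giving $\Phi^{[m,m+\frac12]}(\tau,z_1,z_2,0)=e^{2\pi imz_1}\Phi^{[m,\frac12]}(\tau,z_1,z_2+\tau,0)$) with Lemma \ref{n3-2:lemma:2022-312a} applied at $s=0$, $j=m+\tfrac12$, exactly as you do, with the hypothesis $m\in\tfrac12\nnn_{\rm odd}$ entering only to make $m+\tfrac12$ a positive integer. Your explicit remark that the $k=0$ term vanishes (so the sum starts at $k=1$) is the same bookkeeping the paper performs implicitly when it writes the summation range $1\le k\le \tfrac12+m-1$.
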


\begin{proof} Letting $s=\frac12$ and $p=1$ in Lemma 
\ref{n3-2:lemma:2022-324b}, one has
\begin{subequations}
\begin{equation}
\Phi^{[m, \frac12+m]}(\tau, z_1,z_2,t)
\,\ = \,\ 
e^{2\pi imz_1} \, \Phi^{[m, \frac12]}(\tau, z_1,z_2+\tau,t)
\label{n3-2:eqn:2022-324a1}
\end{equation}
And, by letting $s=0$ and $j=\frac12+m$ in Lemma \ref{n3-2:lemma:2022-312a},
one has
{\allowdisplaybreaks
\begin{eqnarray}
& & \hspace{-12mm}
\Phi^{[m, 0]}(\tau, z_1,z_2,0)
\,\ = \,\ 
\Phi^{[m, \frac12+m]}(\tau, z_1,z_2,0)
\nonumber
\\[3mm]
& &
+ \, 
\sum_{k=1}^{\frac12+m-1}e^{\pi ik(z_1-z_2)} \, 
q^{-\frac{k^2}{4m}} \, 
\big[\theta_{k,m}-\theta_{-k,m}\big](\tau, z_1+z_2)
\label{n3-2:eqn:2022-324a2}
\end{eqnarray}}
\end{subequations}
Then Lemma \ref{n3-2:lemma:2022-324a} follows from 
\eqref{n3-2:eqn:2022-324a1} and \eqref{n3-2:eqn:2022-324a2}.
\end{proof}

Then, by Corollary \ref{n3-2:cor:2022-314a} and Lemma 
\ref{n3-2:lemma:2022-324a}, we have the following:

\begin{prop}
\label{n3-2:prop:2022-331a}
For $m \in \frac12 \nnn_{\rm odd}$, 
$\Phi^{[m,0]}$ is given by the following formula:
{\allowdisplaybreaks
\begin{eqnarray*}
& & \hspace{-8mm}
\theta_{-2m, 2m+1}^{(-)}\Big(\tau, \, \dfrac{2m(z_1-z_2)}{2m+1}\Big) \, 
\Phi^{[m, 0]}(2\tau, 2z_1,2z_2,0)
\\[3mm]
& & \hspace{-10mm}
= \,\ 
- \, i \, \eta(2\tau)^3 \, \Bigg\{ 
\frac{
\theta_{2m, 2m+1}^{(-)}\Big(\tau, \, \dfrac{z_1-z_2}{2m+1}+z_1+z_2\Big)
}{\vartheta_{11}(2\tau, 2z_1)} 
\\[2mm]
& & \hspace{5mm}
+ \,\ 
\frac{
\theta_{2m, 2m+1}^{(-)}\Big(\tau, \,\ \dfrac{z_1-z_2}{2m+1}-z_1-z_2\Big)
}{\vartheta_{11}(2\tau, 2z_2)} \Bigg\}
\\[3mm]
& & \hspace{-5mm}
- \,\ 
q^{-\frac{m}{2(2m+1)}} \, 
e^{\frac{2\pi im}{2m+1}(z_1-z_2)} \, 
\sum_{j=1}^{\infty} \sum_{r=1}^j 
\sum_{\substack{k=0 \\[1mm] k \, : \, {\rm even}}}^{2m-1}
(-1)^j q^{(2m+1)j^2-\frac{1}{8m}(4m(j-r)+2m-k)^2} 
\\[2mm]
& & 
\times \,\ \big\{ 
e^{\pi i(4mr-2m+k)(z_1-z_2)} q^{-\frac12(4mr-2m+k)}
\\[2mm]
& & \hspace{5mm} 
+ \, 
e^{-\pi i(4mr-2m+k)(z_1-z_2)} q^{\frac12(4mr-2m+k)}\big\} \, 
\big[\theta_{k,2m}- \, \theta_{-k,2m}](\tau, z_1+z_2)
\\[2mm]
& & \hspace{-5mm}
+ \,\ 
q^{-\frac{m}{2(2m+1)}} \, 
e^{\frac{2\pi im}{2m+1}(z_1-z_2)} \, 
\sum_{j=1}^{\infty} \sum_{r=0}^{j-1} 
\sum_{\substack{k=0 \\[1mm] k \, : \, {\rm even}}}^{2m-1}
(-1)^j q^{(2m+1)j^2-\frac{1}{8m}(4m(j-r)-2m+k)^2} 
\\[2mm]
& & 
\times \,\ \big\{
e^{\pi i(4mr+2m-k)(z_1-z_2)} q^{-\frac12(4mr+2m-k)}
\\[2mm]
& & \hspace{5mm}
+ \, 
e^{-\pi i(4mr+2m-k)(z_1-z_2)} q^{\frac12(4mr+2m-k)}
\big\} \, 
\big[\theta_{k,2m}- \, \theta_{-k,2m}](\tau, z_1+z_2)
\\[3mm]
& & \hspace{-5mm}
+ \,\ 
\theta_{-2m, 2m+1}^{(-)}\Big(\tau, \, \dfrac{2m(z_1-z_2)}{2m+1}\Big) 
\\[2mm]
& & 
\times \, 
\sum_{k=1}^{m-\frac12}e^{2\pi ik(z_1-z_2)} \, 
q^{-\frac{k^2}{2m}} \, 
\big[\theta_{k,m}-\theta_{-k,m}\big](2\tau, 2z_1+2z_2)
\end{eqnarray*}}
\end{prop}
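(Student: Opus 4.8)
The plan is to substitute the relation of Lemma~\ref{n3-2:lemma:2022-324a} into the rewritten Corollary formula \eqref{n3-2:eqn:2022-325c}. First I would rescale Lemma~\ref{n3-2:lemma:2022-324a} by replacing $(\tau,z_1,z_2)$ with $(2\tau,2z_1,2z_2)$; since then $e^{2\pi i\tau}$ is replaced by $q^2$, this gives
\[
\Phi^{[m,0]}(2\tau,2z_1,2z_2,0)=e^{4\pi imz_1}\,\Phi^{[m,\frac12]}(2\tau,2z_1,2z_2+2\tau,0)
+\sum_{k=1}^{m-\frac12}e^{2\pi ik(z_1-z_2)}q^{-\frac{k^2}{2m}}\big[\theta_{k,m}-\theta_{-k,m}\big](2\tau,2z_1+2z_2).
\]
Multiplying through by the factor $\theta^{(-)}_{-2m,2m+1}\!\big(\tau,\frac{2m(z_1-z_2)}{2m+1}\big)$ that stands on the left of the Proposition, the last sum reproduces verbatim the final block of the asserted formula, so everything reduces to evaluating
\[
e^{4\pi imz_1}\,\theta^{(-)}_{-2m,2m+1}\!\Big(\tau,\tfrac{2m(z_1-z_2)}{2m+1}\Big)\,\Phi^{[m,\frac12]}(2\tau,2z_1,2z_2+2\tau,0).
\]

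The key tool for the rest is a single fractional-shift identity for $\theta^{(-)}_{0,2m+1}$, obtained directly from the definition \eqref{n3-2:eqn:2022-325b} by completing the square: for any $A$,
\[
\theta^{(-)}_{0,2m+1}\!\Big(\tau,A\pm\tfrac{2m}{2m+1}\tau\Big)=e^{\mp2\pi imA}\,q^{-\frac{m^2}{2m+1}}\,\theta^{(-)}_{\pm2m,2m+1}(\tau,A).
\]
With $A=\frac{2m(z_1-z_2)}{2m+1}$ and the lower sign this lets me trade $\theta^{(-)}_{-2m,2m+1}(\tau,A)$ for $\theta^{(-)}_{0,2m+1}\!\big(\tau,A-\frac{2m}{2m+1}\tau\big)=\theta^{(-)}_{0,2m+1}\!\big(\tau,\frac{2m(z_1-z_2-\tau)}{2m+1}\big)$ up to an explicit monomial. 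The point is that this last expression is precisely the multiplier on the left-hand side of \eqref{n3-2:eqn:2022-325c} evaluated at $(z_1,z_2+\tau)$, so I may substitute the whole right-hand side of \eqref{n3-2:eqn:2022-325c}, taken at $(z_1,z_2+\tau)$, for the displayed quantity, carrying along the accumulated prefactor $e^{4\pi imz_1}e^{-2\pi imA}q^{\frac{m^2}{2m+1}}$.

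The third step is to push the substitution $z_2\mapsto z_2+\tau$ through every theta factor of \eqref{n3-2:eqn:2022-325c}. For the two denominator terms I would apply the same fractional-shift identity with the upper sign to turn the numerators $\theta^{(-)}_{0,2m+1}$ into $\theta^{(-)}_{2m,2m+1}$ with exactly the unshifted arguments displayed in the Proposition, and use the recorded quasi-periodicity $\vartheta_{11}(\tau,z+n\tau)=(-1)^nq^{-\frac{n^2}{2}}e^{-2\pi inz}\vartheta_{11}(\tau,z)$ (at modulus $2\tau$, $n=1$) for $\vartheta_{11}(2\tau,2z_2+2\tau)$. For the lattice factors I would use the elementary identity $\theta_{k,M}(\tau,z+\tau)=q^{-M/4}e^{-\pi iMz}\theta_{k+M,M}(\tau,z)$, which follows at once from $\theta_{k,M}(\tau,z)=\sum_{n\equiv k\,(2M)}q^{n^2/4M}e^{\pi inz}$. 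Applied with $M=2m$ to $\big[\theta_{k,2m}-\theta_{-k,2m}\big](\tau,z_1+z_2+\tau)$ it shifts the index by $2m$, and here the hypothesis $m\in\frac12\nnn_{\rm odd}$ is essential: $2m$ is an \emph{odd} integer, so $k+2m$ (with $k$ odd as in \eqref{n3-2:eqn:2022-325c}) becomes even, which is exactly the switch to even summation index seen in the statement. Concretely $\theta_{k+2m,2m}=\theta_{-(2m-k),2m}$ by the $4m$-periodicity, so the shifted bracket equals $-\big[\theta_{2m-k,2m}-\theta_{-(2m-k),2m}\big]$ with the even index $k'=2m-k$.

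The remaining work, and the step I expect to be the main obstacle, is the reindexing of the double sums into the form stated. After the shift each exponential $e^{\pm\pi i(4mr\mp k)(z_1-z_2-\tau)}$ contributes a $q$-power $q^{\mp\frac12(4mr\mp k)}$, and the relabelling $k\mapsto2m-k$ together with the periodicity reductions forces a corresponding relabelling of the inner index $r$; this is what turns the single range $\sum_{r=1}^{j}$ of \eqref{n3-2:eqn:2022-325c} into the two ranges $\sum_{r=1}^{j}$ and $\sum_{r=0}^{j-1}$ and produces the two exponents $4mr\mp2m\pm k$ appearing in the Proposition. One then has to check that all accumulated monomials --- from the fractional-shift identity, from $e^{4\pi imz_1}$, and from the $\vartheta_{11}$, $\theta_{k,2m}$ and exponential shifts --- collapse into the single prefactor $q^{-\frac{m}{2(2m+1)}}e^{\frac{2\pi im}{2m+1}(z_1-z_2)}$ attached to the two sums and into the overall factor of the denominator block. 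Verifying this monomial bookkeeping term by term yields the asserted identity.
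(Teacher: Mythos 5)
Your outline is the paper's own proof: Lemma \ref{n3-2:lemma:2022-324a} evaluated at $(2\tau,2z_1,2z_2)$, the identification of $\theta^{(-)}_{-2m,2m+1}\big(\tau,\frac{2m(z_1-z_2)}{2m+1}\big)$ with the left-hand multiplier of \eqref{n3-2:eqn:2022-325c} at $(z_1,z_2+\tau)$ up to an explicit monomial, substitution of the shifted right-hand side of \eqref{n3-2:eqn:2022-325c}, and conversion of all shifted theta factors back to unshifted ones; in particular your parity observation (odd $k\mapsto$ even $k'=2m-k$ because $2m$ is odd) is exactly the mechanism that produces the even-index sums in the statement, and your accumulated prefactor $e^{4\pi imz_1}e^{-2\pi imA}q^{\frac{m^2}{2m+1}}$ is correct.

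One step, as written, is wrong and must be repaired (though by the same completing-the-square device you already use). For the second denominator term the shifted numerator argument is $\frac{z_1-z_2-\tau}{2m+1}-(z_1+z_2+\tau)=C-\frac{2m+2}{2m+1}\tau$ with $C=\frac{z_1-z_2}{2m+1}-z_1-z_2$, which is \emph{not} of the form $C+\frac{2m}{2m+1}\tau$; it equals $(C-2\tau)+\frac{2m}{2m+1}\tau$. Your upper-sign identity therefore yields $\theta^{(-)}_{2m,2m+1}(\tau,C-2\tau)$, not $\theta^{(-)}_{2m,2m+1}(\tau,C)$ as you claim, and the remaining full-period shift costs an extra sign and monomial, e.g.\ via $\theta^{(-)}_{0,2m+1}(\tau,z-2\tau)=-\,q^{-(2m+1)}e^{2\pi i(2m+1)z}\,\theta^{(-)}_{0,2m+1}(\tau,z)$. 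That minus sign is essential: it cancels the minus sign coming from $\vartheta_{11}(2\tau,2z_2+2\tau)=-\,q^{-1}e^{-4\pi iz_2}\,\vartheta_{11}(2\tau,2z_2)$, which is why both denominator terms enter the Proposition with a plus. The paper packages this case into a separate, third rewriting formula for $\theta^{(-)}_{0,2m+1}\big(\tau,\frac{z-\tau}{2m+1}-z'-\tau\big)$ alongside the two shifts you state. With that correction your deferred bookkeeping does close: the accumulated exponentials collapse to $e^{\frac{2\pi im}{2m+1}(z_1-z_2)}$ and the $q$-powers to $q^{\frac{m^2}{2m+1}-\frac{m}{2}}=q^{-\frac{m}{2(2m+1)}}$, exactly as stated.
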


\begin{proof} By Lemma \ref{n3-2:lemma:2022-324a}, we have
\begin{subequations}
{\allowdisplaybreaks
\begin{eqnarray}
& & \hspace{-12mm}
\Phi^{[m, 0]}(2\tau, 2z_1,2z_2,0)
\,\ = \,\ 
e^{4\pi imz_1} \, \Phi^{[m, \frac12]}(2\tau, 2z_1,2z_2+2\tau,0)
\nonumber
\\[2mm]
& &
+ \, 
\sum_{k=1}^{m-\frac12}e^{2\pi ik(z_1-z_2)} \, 
q^{-\frac{k^2}{2m}} \, 
\big[\theta_{k,m}-\theta_{-k,m}\big](2\tau, 2z_1+2z_2)
\label{n3-2:eqn:2022-331c1}
\end{eqnarray}}
Letting \, $z_2 \rightarrow z_2+\tau$ \, in \eqref{n3-2:eqn:2022-325c}, 
we have
{\allowdisplaybreaks
\begin{eqnarray}
& &
\theta_{0, 2m+1}^{(-)}
\Big(\tau, \dfrac{2m(z_1-z_2-\tau)}{2m+1} \Big) \, 
\Phi^{[m, \frac12]}(2\tau, \, 2z_1, \, 2z_2+2\tau, \, 0)
\nonumber
\\[2mm]
& & \hspace{-5mm}
= 
-i \, \eta(2\tau)^3 \Bigg\{
\frac{\displaystyle 
\theta_{0, 2m+1}^{(-)}
\Big(\tau, \, \frac{z_1-z_2-\tau}{2m+1}+ z_1+z_2+\tau\Big)
}{\vartheta_{11}(2\tau, 2z_1)} 
\nonumber
\\[1mm]
& & \hspace{15mm}
+ \,\ 
\frac{\displaystyle 
\theta_{0, 2m+1}^{(-)}
\Big(\tau, \, \frac{z_1-z_2-\tau}{2m+1}- z_1-z_2-\tau\Big)
}{\vartheta_{11}(2\tau, 2z_2+2\tau)} \Bigg\}
\nonumber
\\[2mm]
& & \hspace{-7mm}
+ \, \sum_{j=1}^{\infty} \sum_{r=1}^j 
\sum_{\substack{k=1 \\[1mm] k \, : \, {\rm odd}}}^{2m-1}
(-1)^j q^{(2m+1)j^2-\frac{1}{8m}(4m(j-r)+k)^2}
\nonumber
\\[1mm]
& & \hspace{-7mm}
\times 
\big\{e^{\pi i(4mr-k)(z_1-z_2-\tau)}+e^{-\pi i(4mr-k)(z_1-z_2-\tau)}\big\}
\big[\theta_{k,2m}-\theta_{-k,2m}](\tau, z_1+z_2+\tau)
\nonumber
\\[2mm]
& & \hspace{-7mm}
- \, \sum_{j=1}^{\infty} \sum_{r=0}^{j-1} 
\sum_{\substack{k=1 \\[1mm] k \, : \, {\rm odd}}}^{2m-1}
(-1)^j q^{(2m+1)j^2-\frac{1}{8m}(4m(j-r)-k)^2}
\nonumber
\\[1mm]
& & \hspace{-7mm}
\times 
\big\{e^{\pi i(4mr+k)(z_1-z_2-\tau)}+e^{-\pi i(4mr+k)(z_1-z_2-\tau)}\big\}
\big[\theta_{k,2m}-\theta_{-k,2m}](\tau, z_1+z_2+\tau)
\nonumber
\\[0mm]
& &
\label{n3-2:eqn:2022-331c2}
\end{eqnarray}}
\end{subequations}
Rewriting \eqref{n3-2:eqn:2022-331c2} by using the formulas
{\allowdisplaybreaks
\begin{eqnarray*}
& &\theta_{0,2m+1}^{(-)}
\Big(\tau, \,\ \dfrac{2m(z-\tau)}{2m+1}\Big)
\,\ = \,\ 
q^{-\frac{m^2}{2m+1}} \, 
e^{\frac{4\pi im^2 z}{2m+1}} \, 
\theta_{-2m,2m+1}^{(-)}\Big(\tau, \,\ \dfrac{2mz}{2m+1}\Big)
\\[2mm]
& &
\theta_{0,2m+1}^{(-)}
\Big(\tau, \, \dfrac{z-\tau}{2m+1}+z'+\tau\Big)
\\[2mm]
& & \hspace{15mm}
= \,\ 
q^{-\frac{m^2}{2m+1}} \, 
e^{-2\pi im (\frac{z}{2m+1}+z')} \, 
\theta_{2m,2m+1}^{(-)}\Big(\tau, \, \dfrac{z}{2m+1}+z'\Big)
\\[2mm]
& &
\theta_{0,2m+1}^{(-)}
\Big(\tau, \, \dfrac{z-\tau}{2m+1}-z'-\tau\Big)
\\[2mm]
& & \hspace{15mm}
= \,\ 
- \, q^{-\frac{m^2}{2m+1}} 
e^{2\pi i (m+1) (\frac{z}{2m+1}-z')} 
\theta_{2m,2m+1}^{(-)}\Big(\tau, \, \dfrac{z}{2m+1}-z'\Big)
\end{eqnarray*}}
and substituting it into \eqref{n3-2:eqn:2022-331c1}, we obtain 
Proposition \ref{n3-2:prop:2022-331a}.
\end{proof}

By Proposition \ref{n3-2:prop:2022-331a} and Lemma 
\ref{n3-2:lemma:2022-312a}, we obtain an explicit formula for \\
$\Phi^{[m,s]}(\tau, z_1, z_2,t)$ when $m \in \frac12 \nnn_{\rm odd}$
and $s \in \zzz$. To go further, we note the following:

\begin{prop} 
\label{n3-2:prop:2022-331c}
For $m \in \frac12 \nnn$, the following formula holds:
\begin{equation}
\Phi^{[2m,0]}(\tau, z_1, z_2,t) \, = \, 
\Phi^{[m,0]}(2\tau, 2z_1, 2z_2,2t)+
\Phi^{[m,\frac12]}(2\tau, 2z_1, 2z_2,2t)
\label{n3-2:eqn:2022-403a}
\end{equation}
\end{prop}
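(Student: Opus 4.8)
The plan is to prove the identity termwise at the level of the two constituents $\Phi^{[\,\cdot\,,\,\cdot\,]}_1$ and $\Phi^{[\,\cdot\,,\,\cdot\,]}_2$ and then subtract, using $\Phi=\Phi_1-\Phi_2$ from \eqref{n3-2:eqn:2022-111c}. The whole argument reduces to a single elementary manipulation of the geometric denominators, so there is no essential obstacle; the only care required is bookkeeping of the doubled arguments. I write $q=e^{2\pi i\tau}$, so that replacing $\tau$ by $2\tau$ turns the nome into $q^2$, and I note at the outset that the exponential prefactors already match, since $e^{-2\pi i m(2t)}=e^{-2\pi i(2m)t}$ and $(q^2)^{m\ell^2}=q^{2m\ell^2}$.

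First I would treat $\Phi_1$. Starting from the right-hand side, I expand the two summands using the definition \eqref{n3-2:eqn:2022-111a} evaluated at $(2\tau,2z_1,2z_2,2t)$, with summation index $\ell\in\zzz$, and place them over the common denominator $1-e^{4\pi iz_1}q^{2\ell}$. The $s=0$ summand contributes the numerator $e^{4\pi i m\ell(z_1+z_2)}q^{2m\ell^2}$, while the $s=\frac12$ summand contributes $e^{4\pi i m\ell(z_1+z_2)}q^{2m\ell^2}\cdot e^{2\pi iz_1}q^{\ell}$, the extra factor arising from $e^{2\pi i\frac12(2z_1)}$ and $q^{2\cdot\frac12\ell}$. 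Hence the two numerators add up to $e^{4\pi i m\ell(z_1+z_2)}q^{2m\ell^2}\bigl(1+e^{2\pi iz_1}q^{\ell}\bigr)$.

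The key step is then the factorization $1-e^{4\pi iz_1}q^{2\ell}=\bigl(1-e^{2\pi iz_1}q^{\ell}\bigr)\bigl(1+e^{2\pi iz_1}q^{\ell}\bigr)$, equivalently $\frac{1}{1-x}=\frac{1+x}{1-x^2}$ with $x=e^{2\pi iz_1}q^{\ell}$. Cancelling the factor $1+e^{2\pi iz_1}q^{\ell}$ collapses the common-denominator sum to $e^{-4\pi imt}\sum_{\ell\in\zzz}\frac{e^{4\pi i m\ell(z_1+z_2)}q^{2m\ell^2}}{1-e^{2\pi iz_1}q^{\ell}}$, which is exactly $\Phi^{[2m,0]}_1(\tau,z_1,z_2,t)$ by \eqref{n3-2:eqn:2022-111a}. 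This establishes the $\Phi_1$-version of \eqref{n3-2:eqn:2022-403a}.

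The computation for $\Phi_2$ is identical in structure: using \eqref{n3-2:eqn:2022-111b}, the common denominator is now $1-e^{-4\pi iz_2}q^{2\ell}=\bigl(1-e^{-2\pi iz_2}q^{\ell}\bigr)\bigl(1+e^{-2\pi iz_2}q^{\ell}\bigr)$, the two numerators combine to $e^{-4\pi i m\ell(z_1+z_2)}q^{2m\ell^2}\bigl(1+e^{-2\pi iz_2}q^{\ell}\bigr)$, and the same cancellation yields $\Phi^{[2m,0]}_2(\tau,z_1,z_2,t)$. Subtracting the $\Phi_2$-identity from the $\Phi_1$-identity and invoking \eqref{n3-2:eqn:2022-111c} gives \eqref{n3-2:eqn:2022-403a}. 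The only point to watch throughout is the matching of the prefactor $e^{-2\pi i m(2t)}$ with $e^{-2\pi i(2m)t}$ and the correct squaring of the nome; once these are lined up, the identity is purely termwise and requires no convergence or modularity input.
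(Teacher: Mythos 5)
Your proof is correct, but it takes a genuinely different route from the paper. The paper derives \eqref{n3-2:eqn:2022-403a} in one line by citing Lemma 2.5 of \cite{W2022} (the splitting formula expressing $\Phi^{[m,s]}(2\tau,2z_1,2z_2,2t)$ through $\Phi^{[2m,2s]}$ evaluated at $(\tau,z_1,z_2,t)$ and at $(\tau,z_1+\frac12,z_2-\frac12,t)$) together with the coincidence $\Phi^{[2m,1]}=\Phi^{[2m,0]}$ from \eqref{n3-2:eqn:2022-331d}; you instead verify the identity directly from the definitions \eqref{n3-2:eqn:2022-111a}--\eqref{n3-2:eqn:2022-111c}: the two series at the doubled arguments share the denominator $1-e^{4\pi iz_1}q^{2\ell}$, their numerators combine to a common factor times $1+e^{2\pi iz_1}q^{\ell}$, and the factorization $1-x^{2}=(1-x)(1+x)$ collapses the sum to $\Phi^{[2m,0]}_1(\tau,z_1,z_2,t)$, with the mirror computation for $\Phi_2$. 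Your route is more elementary and self-contained --- it needs no input from the companion paper, and it actually establishes the stronger termwise statement that the identity holds for $\Phi_1$ and $\Phi_2$ separately, not only for their difference $\Phi=\Phi_1-\Phi_2$. What the paper's route buys is brevity and context: it exhibits the proposition as the $s=0$ instance of the splitting family already developed in \cite{W2022}, so no new series manipulation needs to be checked. The only point your argument uses implicitly --- absolute convergence of the defining series for $\tau\in\ccc_+$ away from the poles, which justifies adding the two series termwise before cancelling --- is the same hypothesis underlying the definitions themselves, so there is no gap.
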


\begin{proof} This formula is obtained immediately from 
Lemma 2.5 in \cite{W2022} and \eqref{n3-2:eqn:2022-331d}.
\end{proof}

\begin{prop}
\label{n3-2:prop:2022-403b}
An explicit formula for the function $\Phi^{[m,s]}$ can be known 
for all $m$ and $s$.
\end{prop}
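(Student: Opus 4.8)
The plan is to assemble the closed formulas already obtained into a finite reduction scheme that expresses any $\Phi^{[m,s]}$ with $m \in \frac12\nnn$, $s \in \frac12\zzz$ explicitly. A first, trivial simplification: the variable $t$ enters $\Phi^{[m,s]}$ only through the overall factor $e^{-2\pi imt}$ in \eqref{n3-2:eqn:2022-111a}--\eqref{n3-2:eqn:2022-111c}, so $\Phi^{[m,s]}(\tau,z_1,z_2,t)=e^{-2\pi imt}\,\Phi^{[m,s]}(\tau,z_1,z_2,0)$ and it suffices to treat $t=0$. The remaining task is to determine $\Phi^{[m,s]}(\tau,z_1,z_2,0)$ for every admissible pair $(m,s)$.

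Next I would reduce the parameter $s$ to a single representative in each parity class. For any $s\in\frac12\zzz$ there is an integer $j$ with $s+j\in\{0,\frac12\}$ (take $j=-s$ when $s\in\zzz$ and $j=\frac12-s$ when $s\in\frac12\zzz_{\rm odd}$). Applying Lemma \ref{n3-2:lemma:2022-312a}, using its $j\geq0$ branch or its $j<0$ branch as appropriate, then writes $\Phi^{[m,s]}(\tau,z_1,z_2,0)$ as $\Phi^{[m,0]}$ or $\Phi^{[m,\frac12]}$ plus an explicit finite sum of Jacobi theta functions. Hence it is enough to produce closed formulas for the two base cases $\Phi^{[m,0]}$ and $\Phi^{[m,\frac12]}$ for all $m\in\frac12\nnn$.

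The case $\Phi^{[m,\frac12]}$ is settled for every $m\in\frac12\nnn$ by part 1) of Corollary \ref{n3-2:cor:2022-314a}. For $\Phi^{[m,0]}$ I would split according to $m$. When $m\in\frac12\nnn_{\rm odd}$, Proposition \ref{n3-2:prop:2022-331a} gives the explicit formula directly. When $m\in\nnn$, I would argue by induction on the $2$-adic valuation $v_2(m)$, using the doubling relation \eqref{n3-2:eqn:2022-403a} of Proposition \ref{n3-2:prop:2022-331c} in the form
\[
\Phi^{[m,0]}(\tau, z_1, z_2, 0) \, = \,
\Phi^{[m/2,0]}(2\tau, 2z_1, 2z_2, 0) \, + \,
\Phi^{[m/2,\frac12]}(2\tau, 2z_1, 2z_2, 0),
\]
where $m/2\in\frac12\nnn$. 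The half-integral-index term on the right is already given by part 1) of Corollary \ref{n3-2:cor:2022-314a}. For the term $\Phi^{[m/2,0]}$: if $m$ is odd then $m/2\in\frac12\nnn_{\rm odd}$ and Proposition \ref{n3-2:prop:2022-331a} applies, furnishing the base of the induction; if $m$ is even then $m/2\in\nnn$ with $v_2(m/2)=v_2(m)-1$, and the inductive hypothesis applies. After at most $v_2(m)+1$ applications of the doubling relation, every contribution has been reduced to the half-odd-index formulas of Proposition \ref{n3-2:prop:2022-331a} and Corollary \ref{n3-2:cor:2022-314a}, evaluated at the rescaled argument $2^{v_2(m)+1}\tau$.

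The argument is essentially bookkeeping, so I do not expect a genuine analytic obstacle; the point that needs care is the termination and base case of the $2$-adic recursion. One must confirm that the repeated halving of an integer index always lands, in finitely many steps, in the region $\frac12\nnn_{\rm odd}$ where the closed formula of Proposition \ref{n3-2:prop:2022-331a} is available, and that the accompanying rescalings $\tau\mapsto2\tau$, $z_i\mapsto2z_i$ are tracked consistently through each step. Verifying that the three explicit inputs (Lemma \ref{n3-2:lemma:2022-312a}, Corollary \ref{n3-2:cor:2022-314a}, Proposition \ref{n3-2:prop:2022-331a}), together with the doubling relation of Proposition \ref{n3-2:prop:2022-331c}, jointly exhaust all pairs $(m,s)$ completes the proof.
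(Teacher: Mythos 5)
Your proposal is correct and follows essentially the same route as the paper: reduce $s$ to the base cases via Lemma \ref{n3-2:lemma:2022-312a}, invoke Corollary \ref{n3-2:cor:2022-314a} and Proposition \ref{n3-2:prop:2022-331a} for the half-odd cases, and handle $\Phi^{[m,0]}$ with $m\in\nnn$ by induction on the $2$-adic valuation of $m$ using the doubling relation \eqref{n3-2:eqn:2022-403a}. The paper writes $m=2^p m'$ with $m'\in\nnn_{\rm odd}$ and inducts on $p$, which is exactly your induction on $v_2(m)$, with the same base case and the same inductive step.
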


\begin{proof} 
Since $\Phi^{[m,s]}$, when $(m,s) \in 
(\frac12 \nnn_{\rm odd} \times \frac12 \zzz) \cup 
(\frac12 \nnn \times \frac12 \zzz_{\rm odd})$, 
is known by Corollary \ref{n3-2:cor:2022-314a} and 
Proposition \ref{n3-2:prop:2022-331a} and 
Lemma \ref{n3-2:lemma:2022-312a},
it suffices to prove the claim:
\begin{subequations}
\begin{equation}
\text{\lq \lq an explicit formula for} \,\ \Phi^{[m, 0]} \,\ (m \in \nnn) \,\ 
\text{can be known"} \, .
\label{n3-2:eqn:2022-403b}
\end{equation}
Decomposing $m=2^p m'$, where $m' \in \nnn_{\rm odd}$ and 
$p \in \zzz_{\geq 0}$, we shall show the claim:
\begin{equation}
\text{\lq \lq an explicit formula for} \,\ \Phi^{[2^pm', 0]} \,\ 
\text{can be known"}
\label{n3-2:eqn:2022-403c}
\end{equation}
\end{subequations}
by induction on $p$. As the first step, we consider the case $p=0$. 
In the case $p=0$, the formula \eqref{n3-2:eqn:2022-403a} gives
\begin{equation}
\Phi^{[m',0]}(\tau, z_1, z_2,t) \, = \, 
\Phi^{[\frac{m'}{2},0]}(2\tau, 2z_1, 2z_2,2t)+
\Phi^{[\frac{m'}{2},\frac12]}(2\tau, 2z_1, 2z_2,2t)
\label{n3-2:eqn:2022-403d}
\end{equation}
Since the RHS of \eqref{n3-2:eqn:2022-403d} is known by 
Proposition \ref{n3-2:prop:2022-331a} and 
Corollary \ref{n3-2:cor:2022-314a}, $\Phi^{[m',0]}$ is known 
so ${\rm \eqref{n3-2:eqn:2022-403c}}_{p=0}$ holds.
As the second step, we assume that
${\rm \eqref{n3-2:eqn:2022-403c}}_{p}$ is true and apply 
\eqref{n3-2:eqn:2022-403a} to $m=2^{p}m'$:
\begin{equation}
\Phi^{[2^{p+1}m',0]}(\tau, z_1, z_2,t) = 
\Phi^{[2^pm',0]}(2\tau, 2z_1, 2z_2,2t)+
\Phi^{[2^pm',\frac12]}(2\tau, 2z_1, 2z_2,2t)
\label{n3-2:eqn:2022-403e}
\end{equation}
where the RHS of \eqref{n3-2:eqn:2022-403e} is known by assumption.
So the LHS of \eqref{n3-2:eqn:2022-403e} can be known, 
namely ${\rm \eqref{n3-2:eqn:2022-403c}}_{p+1}$ is true, 
proving Proposition \ref{n3-2:prop:2022-403b}.
\end{proof}

\section{Characters of N=3 modules}

\begin{prop}
\label{n3-2:prop:2022-320a}
For $m \in \nnn$, the character and the supercharacter of the N=3 module 
$H(\Lambda^{[K(m), 0]})$ are given by the following formulas:
\begin{enumerate}
\item[{\rm 1)}] \,\ ${\rm ch}_{H(\Lambda^{[K(m), 0]})}^{(+)}(\tau,z)$ 
{\allowdisplaybreaks
\begin{eqnarray*}
& & \hspace{-15mm}
= -i \,
e^{-\frac{\pi im}{4}} 
\frac{\eta(2\tau)^2}{\eta(\frac{\tau}{2})} \cdot
\frac{\vartheta_{00}(\tau, z)}{\vartheta_{11}(\tau, z)} \cdot 
\frac{1}{\theta_{-\frac{m}{2}, m+1}(\tau, \frac12)}
\, \Bigg\{ \, 
\frac{\theta_{\frac12, m+1}(\tau,z)}{\theta_{-\frac12, 1}(\tau,z)}
- 
\frac{\theta_{-\frac12, m+1}(\tau,z)}{\theta_{\frac12, 1}(\tau,z)}
\Bigg\}
\\[2mm]
& & \hspace{-14mm}
+ \,\ 
q^{\frac{m^2}{16(m+1)}} \, e^{-\frac{\pi im}{4}} \, 
\frac{1}{\eta(\frac{\tau}{2})\eta(2\tau)} \cdot
\frac{\vartheta_{00}(\tau, z)}{\vartheta_{11}(\tau, z)} \cdot
\frac{1}{\theta_{-\frac{m}{2}, m+1}(\tau, \frac12)}
\\[2mm]
& &\hspace{-10mm}
\times \, \Bigg\{
\sum_{j=1}^{\infty} \sum_{r=1}^j 
\sum_{\substack{k=1 \\[1mm] k \, : \, {\rm odd}}}^{m-1}
(-1)^j \, q^{(m+1)j^2-\frac{1}{4m}(2mr-k)^2-\frac{m}{16}} %
\\[2mm]
& & \hspace{-7mm}
\times \big[
q^{(j+\frac14) (2mr-k)}e^{\frac{\pi i}{2}(2mr+k)}
+
q^{(j-\frac14) (2mr-k)}e^{\frac{\pi i}{2}(2mr-k)}
\big]
\big[\theta_{k,m}-\theta_{-k,m}](\tau, z)
\\[3mm]
& & \hspace{-10mm}
- \, \sum_{j=1}^{\infty} \sum_{r=0}^{j-1} 
\sum_{\substack{k=1 \\[1mm] k \, : \, {\rm odd}}}^{m-1}
(-1)^j \, q^{(m+1)j^2-\frac{1}{4m}(2mr+k)^2-\frac{m}{16}} 
\\[0mm]
& & \hspace{-7mm}
\times 
\big[
q^{(j+\frac14) (2mr+k)}e^{\frac{\pi i}{2}(2mr-k)}
+ 
q^{(j-\frac14) (2mr+k)}e^{\frac{\pi i}{2}(2mr+k)}
\big]
\big[\theta_{k,m}-\theta_{-k,m}](\tau, z)
\Bigg\}
\end{eqnarray*}}

\vspace{-3mm}

\item[{\rm 2)}] \,\ ${\rm ch}_{H(\Lambda^{[K(m), 0]})}^{(-)}(\tau,z)$ 
{\allowdisplaybreaks
\begin{eqnarray*}
& & \hspace{-10mm}
= \, 
\frac{\eta(\frac{\tau}{2})\eta(2\tau)^3}{\eta(\tau)^3} \cdot
\frac{\vartheta_{01}(\tau,z)}{\vartheta_{11}(\tau,z)} \cdot 
\frac{1}{\theta_{\frac{m}{2}, m+1}^{(-)}(\tau,0)} \, 
\Bigg\{ \, 
\frac{\theta_{\frac12, m+1}^{(-)}(\tau,z)}{\theta_{-\frac12, 1}^{(-)}(\tau,z)}
\, - \, 
\frac{\theta_{-\frac12, m+1}^{(-)}(\tau,z)}{\theta_{\frac12, 1}^{(-)}(\tau,z)}
\Bigg\}
\\[2mm]
& & \hspace{-5mm}
+ \,\ q^{\frac{m^2}{16(m+1)}} \, 
\frac{\eta(\frac{\tau}{2})}{\eta(\tau)^3} \cdot
\frac{\vartheta_{01}(\tau,z)}{\vartheta_{11}(\tau,z)} \cdot 
\frac{1}{\theta_{\frac{m}{2}, m+1}^{(-)}(\tau,0)}
\\[2mm]
& &
\times \,\ \Bigg\{
\sum_{j=1}^{\infty} \sum_{r=1}^j 
\sum_{\substack{k=1 \\[1mm] k \, : \, {\rm odd}}}^{m-1}
(-1)^j \, q^{(m+1)j^2-\frac{1}{4m}(2mr-k)^2-\frac{m}{16}}
\\[2mm]
& & \hspace{5mm}
\times \, \big\{q^{(j+\frac14)(2mr-k)}
\, + \, q^{(j-\frac14)(2mr-k)}
\big\}
\big[\theta_{k,m}-\theta_{-k,m}](\tau, z)
\\[3mm]
& & 
- \,\ \sum_{j=1}^{\infty} \sum_{r=0}^{j-1} 
\sum_{\substack{k=1 \\[1mm] k \, : \, {\rm odd}}}^{m-1}
(-1)^j \, q^{(m+1)j^2-\frac{1}{4m}(2mr+k)^2-\frac{m}{16}}
\\[0mm]
& & \hspace{5mm}
\times \, \big\{
q^{(j+\frac14)(2mr+k)} \, + \, q^{(j-\frac14)(2mr+k)}
\big\}
\big[\theta_{k,m}-\theta_{-k,m}](\tau, z)
\Bigg\}
\end{eqnarray*}}
\end{enumerate}
\end{prop}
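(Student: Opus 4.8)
The plan is to combine the quantum Hamiltonian reduction of \cite{W2022} with the closed forms for $\Phi^{[m,s]}$ obtained in this section. The reduction of the $\widehat{B}(1,1)$-module $L(\Lambda^{[K(m),0]})$ expresses the normalized character and supercharacter of $H(\Lambda^{[K(m),0]})$ as an explicit $\eta$--$\vartheta$ prefactor (the inverse $N=3$ denominator/Jacobian) times the mock theta function $\Phi^{[m,0]}$, with the Cartan coordinates of $\widehat{B}(1,1)$ specialized to functions of the single N=3 variable $z$; concretely one is led to the combination of $\Phi^{[m,0]}$ evaluated at a point and at its half-period translate, i.e. exactly the object that Proposition \ref{n3-2:prop:2022-314a} evaluates. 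First I would record this reduction formula from \cite{W2022}, writing the (super)character as $\text{(prefactor)}\times\{\Phi^{[m,0]}(\tau,z_1,-z_3,0)-\Phi^{[m,0]}(\tau,z_1+\tfrac12,-z_3-\tfrac12,0)\}$ with $z_1-z_3=z$.

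Next I would substitute the right-hand side \eqref{n3-2:eqn:2022-317c} of Proposition \ref{n3-2:prop:2022-314a} verbatim; since its left-hand side carries the factor $\theta_{0,m+1}\!\big(\tau,\tfrac{-\frac12+m(z_1+z_3)}{m+1}\big)$, dividing turns the normalizer into that same theta. I would then fix the remaining combination $z_1+z_3$ to the value dictated by the reduction, of the form $c_0+c_1\tau$ with $\tau$-independent part $c_0=\tfrac12$ and $\tau$-linear part $c_1=-\tfrac12$. Three simplifications then produce the stated shape: (a) the theta index-shift $\theta_{k,M}(\tau,z+\tfrac{a\tau}{M})=q^{-a^2/4M}e^{-\pi iaz}\theta_{k+a,M}(\tau,z)$ converts $\theta_{0,m+1}(\tau,\cdot)$ into $\theta_{-\frac{m}{2},m+1}(\tau,\tfrac12)$, which is precisely the source of the scalars $e^{-\pi im/4}$ and $q^{\frac{m^2}{16(m+1)}}$; (b) the two leading terms $\theta_{0,m+1}(\tau,\cdots\pm(z_1-z_3))/(\vartheta_{11}\vartheta_{10})$ collapse, using the same index shift on the numerators together with $\vartheta_{11}(\tau,z+\tfrac12)=-\vartheta_{10}(\tau,z)$, $\vartheta_{11}(\tau,-\tfrac12)=2\eta(2\tau)^2/\eta(\tau)$ and $\vartheta_{11}\vartheta_{10}=\tfrac{\eta(\tau)^2}{\eta(2\tau)}\vartheta_{11}(2\tau,2z)$ (all already used in the proof of Proposition \ref{n3-2:prop:2022-314a}), into $\tfrac{\vartheta_{00}(\tau,z)}{\vartheta_{11}(\tau,z)}\cdot\big\{\tfrac{\theta_{\frac12,m+1}(\tau,z)}{\theta_{-\frac12,1}(\tau,z)}-\tfrac{\theta_{-\frac12,m+1}(\tau,z)}{\theta_{\frac12,1}(\tau,z)}\big\}$; (c) in the two lattice sums the constant part $c_0=\tfrac12$ gives the phases $e^{\frac{\pi i}{2}(2mr\pm k)}$ (identifying the two signs through $(-1)^{mr}=1$), while the $\tau$-part $c_1=-\tfrac12$, reorganized against the Gaussian and the lattice sum of the normalizer by completing the square and reindexing $(j,r)$, yields the split $q^{(j+\frac14)(2mr\mp k)}+q^{(j-\frac14)(2mr\mp k)}$ together with the residual $q^{-\frac{m}{16}}$.

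For the supercharacter (part 2) I would repeat the computation with the $(-1)^F$-twist, i.e. the odd theta-characteristic that shifts the free-fermion variable by $\tfrac12$. This replaces every Jacobi theta by its alternating companion $\theta^{(-)}_{k,M}$ of \eqref{n3-2:eqn:2022-325b}, turns $\vartheta_{00}$ into $\vartheta_{01}$ and $\theta_{-\frac{m}{2},m+1}(\tau,\tfrac12)$ into $\theta^{(-)}_{\frac{m}{2},m+1}(\tau,0)$, erases the $e^{\frac{\pi i}{2}(\cdots)}$ phases, and changes the $\eta$-prefactor from $\eta(2\tau)^2/\eta(\tfrac{\tau}{2})$ to $\eta(\tfrac{\tau}{2})\eta(2\tau)^3/\eta(\tau)^3$; the two parts thereby run in parallel.

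The hard part is entirely the bookkeeping of step (c): one must verify that the single exponentials $e^{\pm\pi i(2mr\mp k)(z_1+z_3)}$ of \eqref{n3-2:eqn:2022-317c}, once the chosen value of $z_1+z_3$ is inserted and the summation indices are rearranged so that the Gaussian $-\tfrac1{4m}(2m(j-r)\pm k)^2$ is rewritten as $-\tfrac1{4m}(2mr\mp k)^2$, reproduce the precise two-term combination $q^{(j\pm\frac14)(\cdots)}e^{\frac{\pi i}{2}(\cdots)}$, and that all accumulated scalars coalesce into exactly $e^{-\pi im/4}$, $q^{\frac{m^2}{16(m+1)}}$ and $q^{-\frac{m}{16}}$ multiplying a single normalizer $\theta_{-\frac{m}{2},m+1}(\tau,\tfrac12)$. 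This matching --- in particular the interplay between the lattice index of the normalizing theta and that of the $\Phi$-sum, which is what manufactures the $q^{(j\pm\frac14)}$ factors --- is routine but delicate, and is where essentially all the labor of the proof resides. Should the direct route through Proposition \ref{n3-2:prop:2022-314a} prove awkward, the same conclusion follows by substituting instead the $2\tau$-form Corollary \ref{n3-2:cor:2022-314a}, since by Proposition \ref{n3-2:prop:2022-331c} the two inputs are interchangeable.
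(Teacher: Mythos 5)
Your proposal is correct and takes essentially the same route as the paper's proof: there, the reduction formula \eqref{n3-2:eqn:2022-321c} from \cite{W2022} is combined with Corollary~\ref{n3-2:cor:2022-314a}~1) (with $m$ replaced by $\tfrac{m}{2}$ and $z_1=\tfrac{z}{2}+\tfrac{\tau}{4}-\tfrac14$, $z_2=\tfrac{z}{2}-\tfrac{\tau}{4}+\tfrac14$ for the character, resp.\ $z_1=\tfrac{z}{2}+\tfrac{\tau}{4}$, $z_2=\tfrac{z}{2}-\tfrac{\tau}{4}$ for the supercharacter) and then divided by the normalizing theta and $\overset{N=3}{R}{}^{(\pm)}$ using exactly the index-shift and $\vartheta$-identities you invoke, and since Corollary~\ref{n3-2:cor:2022-314a} is nothing but Proposition~\ref{n3-2:prop:2022-314a} repackaged through \eqref{n3-2:eqn:2022-317e}, your direct substitution of Proposition~\ref{n3-2:prop:2022-314a} is the identical computation. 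The only slips, immaterial to the plan, are that the reduction dictates $z_1+z_3=-\tfrac12+\tfrac{\tau}{2}$ (not $\tfrac12-\tfrac{\tau}{2}$), that the residual $q^{-\frac{m}{16}}$ is simply the prefactor already present in \eqref{n3-2:eqn:2022-321c} rather than a by-product of the reindexing in your step (c), and that the interchangeability of the two inputs rests on \eqref{n3-2:eqn:2022-317e} (Lemma 2.5 of \cite{W2022}), not on Proposition~\ref{n3-2:prop:2022-331c}.
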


\begin{proof} By Proposition 4.1 in \cite{W2022}, the character and 
supercharacter of the N=3 module $H(\Lambda^{[K(m), m_2]})$ are given by
the following formulas:
\begin{subequations}
{\allowdisplaybreaks
\begin{eqnarray}
\big(\overset{N=3}{R}{}^{(+)}{\rm ch}_{H(\Lambda^{[K(m), m_2]})}^{(+)}\big)
(\tau,z) \hspace{-2mm} &=& \hspace{-2mm} 
q^{-\frac{m}{16}} \, 
\Phi^{[\frac{m}{2}, \frac{m_2+1}{2}]}
\Big(2\tau, z+\frac{\tau}{2}-\frac12, z-\frac{\tau}{2}+\frac12, 0\Big)
\nonumber
\\[2mm]
\big(\overset{N=3}{R}{}^{(-)}{\rm ch}_{H(\Lambda^{[K(m), m_2]})}^{(-)}\big)
(\tau,z) \hspace{-2mm} &=& \hspace{-2mm}
q^{-\frac{m}{16}} \, 
\Phi^{[\frac{m}{2}, \frac{m_2+1}{2}]}
\Big(2\tau, z+\frac{\tau}{2}, z-\frac{\tau}{2}, 0\Big)
\label{n3-2:eqn:2022-321b1}
\end{eqnarray}}
where 
\begin{equation} \left\{
\begin{array}{ccl}
\overset{N=3}{R}{}^{(+)}(\tau,z) &:=& \eta(\frac{\tau}{2})\eta(2\tau)
\dfrac{\vartheta_{11}(\tau,z)}{\vartheta_{00}(\tau,z)}
\\[4mm]
\overset{N=3}{R}{}^{(-)}(\tau,z) &:=& \dfrac{\eta(\tau)^3}{\eta(\frac{\tau}{2})} 
\cdot
\dfrac{\vartheta_{11}(\tau,z)}{\vartheta_{01}(\tau,z)}
\end{array} \right.
\label{n3-2:eqn:2022-321b2}
\end{equation}
\end{subequations}
In particular when $m_2=0$, we have
{\allowdisplaybreaks
\begin{eqnarray}
\big(\overset{N=3}{R}{}^{(+)}{\rm ch}_{H(\Lambda^{[K(m), 0]})}^{(+)}\big)
(\tau,z) \hspace{-2mm}&=& \hspace{-2mm}
q^{-\frac{m}{16}} \, 
\Phi^{[\frac{m}{2}, \frac{1}{2}]}
\Big(2\tau, z+\frac{\tau}{2}-\frac12, z-\frac{\tau}{2}+\frac12, 0\Big)
\nonumber
\\[2mm]
\big(\overset{N=3}{R}{}^{(-)}{\rm ch}_{H(\Lambda^{[K(m), 0]})}^{(-)}\big)
(\tau,z) \hspace{-2mm} &=& \hspace{-2mm}
q^{-\frac{m}{16}} \, 
\Phi^{[\frac{m}{2}, \frac{1}{2}]}
\Big(2\tau, z+\frac{\tau}{2}, z-\frac{\tau}{2}, 0\Big) \, .
\label{n3-2:eqn:2022-321c}
\end{eqnarray}}

\vspace{-1mm}

\noindent
The functions $\Phi^{[\frac{m}{2}, \frac{1}{2}]}
(2\tau, z+\frac{\tau}{2}-\frac12, z-\frac{\tau}{2}+\frac12, 0)$
and 
$\Phi^{[\frac{m}{2}, \frac{1}{2}]}
(2\tau, z+\frac{\tau}{2}, z-\frac{\tau}{2}, 0)$
in the RHS of \eqref{n3-2:eqn:2022-321c} 
are obtained from Corollary \ref{n3-2:cor:2022-314a} by 
replacing $m$ with $\frac{m}{2}$ and putting $\left\{
\begin{array}{ccc}
z_1 &=&\frac{z}{2}+\frac{\tau}{4}-\frac14 \\[1mm]
z_2 &=&\frac{z}{2}-\frac{\tau}{4}+\frac14
\end{array}\right. $ and $\left\{
\begin{array}{ccc}
z_1 &=&\frac{z}{2}+\frac{\tau}{4} \\[1mm]
z_2 &=&\frac{z}{2}-\frac{\tau}{4}
\end{array}\right. $ respectively as follows:
\begin{subequations}
{\allowdisplaybreaks
\begin{eqnarray}
& & \hspace{-8mm}
\theta_{0, m+1}
\Big(\tau, \, - \, \dfrac12+\dfrac{m\tau}{2(m+1)}\Big) \, 
\Phi^{[\frac{m}{2}, \frac12]}
\Big(2\tau, \,\ z+\dfrac{\tau}{2}-\dfrac12, \,\ 
z-\dfrac{\tau}{2}+\dfrac12, \,\ 0\Big)
\nonumber
\\[2mm]
&=&
-i \, \eta(2\tau)^3 \, \Bigg\{ \, 
\frac{\displaystyle 
\theta_{0, m+1}\Big(\tau, \, \frac{\tau}{2(m+1)}+z\Big)
}{\vartheta_{10}(2\tau, \, z+\frac{\tau}{2})} 
\, - \, 
\frac{\displaystyle 
\theta_{0, m+1}\Big(\tau, \, \frac{\tau}{2(m+1)}-z\Big)
}{\vartheta_{10}(2\tau, \, z-\frac{\tau}{2})} \Bigg\}
\nonumber
\\[2mm]
& & \hspace{-5mm}
+ \, \sum_{j=1}^{\infty} \sum_{r=1}^j 
\sum_{\substack{k=1 \\[1mm] k \, : \, {\rm odd}}}^{m-1}
(-1)^j \, q^{(m+1)j^2-\frac{1}{4m}(2mr-k)^2} %
\nonumber
\\[2mm]
& & \hspace{-5mm}
\times \big\{
q^{(j+\frac14) (2mr-k)}e^{\frac{\pi i}{2}(2mr+k)}
+
q^{(j-\frac14) (2mr-k)}e^{\frac{\pi i}{2}(2mr-k)}
\big\}
\big[\theta_{k,m}-\theta_{-k,m}](\tau, z)
\nonumber
\\[2mm]
& & \hspace{-5mm}
- \, \sum_{j=1}^{\infty} \sum_{r=0}^{j-1} 
\sum_{\substack{k=1 \\[1mm] k \, : \, {\rm odd}}}^{m-1}
(-1)^j \, q^{(m+1)j^2-\frac{1}{4m}(2mr+k)^2} 
\nonumber
\\[2mm]
& & \hspace{-5mm}
\times 
\big\{
q^{(j+\frac14) (2mr+k)}e^{\frac{\pi i}{2}(2mr-k)}
+ 
q^{(j-\frac14) (2mr+k)}e^{\frac{\pi i}{2}(2mr+k)}
\big\}
\big[\theta_{k,m}-\theta_{-k,m}](\tau, z)
\nonumber
\\[0mm]
& &
\label{n3-2:eqn:2022-321a1}
\end{eqnarray}}
and
{\allowdisplaybreaks
\begin{eqnarray}
& & \hspace{-10mm}
\theta_{0, m+1}
\Big(\tau, \, - \, \dfrac{\tau+1}{2(m+1)}+\dfrac{\tau}{2}\Big) \, 
\Phi^{[\frac{m}{2}, \frac12]}
\Big(2\tau, \,\ z+\dfrac{\tau}{2}, \,\ 
z-\dfrac{\tau}{2}, \,\ 0\Big)
\nonumber
\\[2mm]
&=&
-i \, \eta(2\tau)^3 \, \Bigg\{ \, 
\frac{\displaystyle 
\theta_{0, m+1}\Big(\tau, \, 
\frac{\tau+1}{2(m+1)}+ z\Big)
}{\vartheta_{11}(2\tau, z+\frac{\tau}{2})} 
\, + \, 
\frac{\displaystyle 
\theta_{0, m+1}\Big(\tau, \, \frac{\tau+1}{2(m+1)}
- z\Big)
}{\vartheta_{11}(2\tau, z-\frac{\tau}{2})} \Bigg\}
\nonumber
\\[2mm]
& & \hspace{-5mm}
+ \, \sum_{j=1}^{\infty} \sum_{r=1}^j 
\sum_{\substack{k=1 \\[1mm] k \, : \, {\rm odd}}}^{m-1}
(-1)^j \, q^{(m+1)j^2-\frac{1}{4m}(2mr-k)^2}
\nonumber
\\[2mm]
& & 
\times \, \big\{q^{(j+\frac14)(2mr-k)}
\, + \, q^{(j-\frac14)(2mr-k)}
\big\}
\big[\theta_{k,m}-\theta_{-k,m}](\tau, z)
\nonumber
\\[2mm]
& & \hspace{-5mm}
- \, \sum_{j=1}^{\infty} \sum_{r=0}^{j-1} 
\sum_{\substack{k=1 \\[1mm] k \, : \, {\rm odd}}}^{m-1}
(-1)^j \, q^{(m+1)j^2-\frac{1}{4m}(2mr+k)^2}
\nonumber
\\[2mm]
& & 
\times \, \big\{
q^{(j+\frac14)(2mr+k)} \, + \, q^{(j-\frac14)(2mr+k)}
\big\}
\big[\theta_{k,m}-\theta_{-k,m}](\tau, z)
\label{n3-2:eqn:2022-321a2}
\end{eqnarray}}
\end{subequations}
Then by \eqref{n3-2:eqn:2022-321c} and \eqref{n3-2:eqn:2022-321a1}, we have 
\begin{subequations}
{\allowdisplaybreaks
\begin{eqnarray}
& & \hspace{-8mm}
\theta_{0, m+1}
\Big(\tau, \, - \, \dfrac12+\dfrac{m\tau}{2(m+1)}\Big) \, 
\big(\overset{N=3}{R}{}^{(+)} {\rm ch}_{H(\Lambda^{[K(m),0]})}\big)(\tau, z)
\nonumber
\\[2mm]
&=& 
q^{-\frac{m}{16}}
\theta_{0, m+1}
\Big(\tau, \, - \, \dfrac12+\dfrac{m\tau}{2(m+1)}\Big) \, 
\Phi^{[\frac{m}{2}, \frac12]}
\Big(2\tau, \,\ z+\dfrac{\tau}{2}-\dfrac12, \,\ 
z-\dfrac{\tau}{2}+\dfrac12, \,\ 0\Big)
\nonumber
\\[2mm]
&=&
-i q^{-\frac{m}{16}} \eta(2\tau)^3 \Bigg\{
\frac{\displaystyle 
\theta_{0, m+1}\Big(\tau, \frac{\tau}{2(m+1)}+z\Big)
}{\vartheta_{10}(2\tau, z+\frac{\tau}{2})} 
- 
\frac{\displaystyle 
\theta_{0, m+1}\Big(\tau, \frac{\tau}{2(m+1)}-z\Big)
}{\vartheta_{10}(2\tau, z-\frac{\tau}{2})} \Bigg\}
\nonumber
\\[2mm]
& & \hspace{-5mm}
+ \, \sum_{j=1}^{\infty} \sum_{r=1}^j 
\sum_{\substack{k=1 \\[1mm] k \, : \, {\rm odd}}}^{m-1}
(-1)^j \, q^{(m+1)j^2-\frac{1}{4m}(2mr-k)^2-\frac{m}{16}} %
\nonumber
\\[2mm]
& & \hspace{-5mm}
\times \big\{
q^{(j+\frac14) (2mr-k)}e^{\frac{\pi i}{2}(2mr+k)}
+
q^{(j-\frac14) (2mr-k)}e^{\frac{\pi i}{2}(2mr-k)}
\big\}
\big[\theta_{k,m}-\theta_{-k,m}](\tau, z)
\nonumber
\\[2mm]
& & \hspace{-5mm}
- \, \sum_{j=1}^{\infty} \sum_{r=0}^{j-1} 
\sum_{\substack{k=1 \\[1mm] k \, : \, {\rm odd}}}^{m-1}
(-1)^j \, q^{(m+1)j^2-\frac{1}{4m}(2mr+k)^2-\frac{m}{16}} 
\nonumber
\\[2mm]
& & \hspace{-5mm}
\times 
\big\{
q^{(j+\frac14) (2mr+k)}e^{\frac{\pi i}{2}(2mr-k)}
+ 
q^{(j-\frac14) (2mr+k)}e^{\frac{\pi i}{2}(2mr+k)}
\big\}
\big[\theta_{k,m}-\theta_{-k,m}](\tau, z)
\nonumber
\\[0mm]
& &
\label{n3-2:eqn:2022-323a1}
\end{eqnarray}}
and, by \eqref{n3-2:eqn:2022-321c} and 
\eqref{n3-2:eqn:2022-321a2}, we have
{\allowdisplaybreaks
\begin{eqnarray}
& & \hspace{-8mm}
\theta_{0, m+1}
\Big(\tau, \, - \, \dfrac{\tau+1}{2(m+1)}+\dfrac{\tau}{2}\Big) \, 
\big(\overset{N=3}{R}{}^{(-)} 
{\rm ch}_{H(\Lambda^{[K(m),0]})}^{(-)}\big)(\tau, z)
\nonumber
\\[2mm]
&= & 
q^{-\frac{m}{16}}
\theta_{0, m+1}
\Big(\tau, \, 
- \, \dfrac{\tau+1}{2(m+1)}+\dfrac{\tau}{2}
\Big) \, 
\Phi^{[\frac{m}{2}, \frac12]}
\Big(2\tau, \,\ z+\dfrac{\tau}{2}, \,\ 
z-\dfrac{\tau}{2}, \,\ 0\Big)
\nonumber
\\[2mm]
&=&
-i q^{-\frac{m}{16}} \eta(2\tau)^3 \Bigg\{ 
\frac{\displaystyle 
\theta_{0, m+1}\Big(\tau, 
\frac{\tau+1}{2(m+1)}+ z\Big)
}{\vartheta_{11}(2\tau, z+\frac{\tau}{2})} 
+ 
\frac{\displaystyle 
\theta_{0, m+1}\Big(\tau, \frac{\tau+1}{2(m+1)}
- z\Big)
}{\vartheta_{11}(2\tau, z-\frac{\tau}{2})} \Bigg\}
\nonumber
\\[2mm]
& & \hspace{-5mm}
+ \, \sum_{j=1}^{\infty} \sum_{r=1}^j 
\sum_{\substack{k=1 \\[1mm] k \, : \, {\rm odd}}}^{m-1}
(-1)^j \, q^{(m+1)j^2-\frac{1}{4m}(2mr-k)^2-\frac{m}{16}}
\nonumber
\\[2mm]
& & 
\times \, \big\{q^{(j+\frac14)(2mr-k)}
\, + \, q^{(j-\frac14)(2mr-k)}
\big\}
\big[\theta_{k,m}-\theta_{-k,m}](\tau, z)
\nonumber
\\[2mm]
& & \hspace{-5mm}
- \, \sum_{j=1}^{\infty} \sum_{r=0}^{j-1} 
\sum_{\substack{k=1 \\[1mm] k \, : \, {\rm odd}}}^{m-1}
(-1)^j \, q^{(m+1)j^2-\frac{1}{4m}(2mr+k)^2-\frac{m}{16}}
\nonumber
\\[2mm]
& & 
\times \, \big\{
q^{(j+\frac14)(2mr+k)} \, + \, q^{(j-\frac14)(2mr+k)}
\big\}
\big[\theta_{k,m}-\theta_{-k,m}](\tau, z)
\label{n3-2:eqn:2022-323a2}
\end{eqnarray}}
\end{subequations}
Then, dividing both sides of \eqref{n3-2:eqn:2022-323a1} and 
\eqref{n3-2:eqn:2022-323a2} by \\
$\theta_{0, m+1}(\tau, -\frac12+\frac{m\tau}{2(m+1)}) 
\overset{N=3}{R}{}^{(+)}(\tau,z)$
and 
$\theta_{0, m+1}(\tau, - \frac{\tau+1}{2(m+1)}+\frac{\tau}{2}) 
\overset{N=3}{R}{}^{(-)}(\tau,z)$ \\
respectively, we obtain the following formulas:
\begin{subequations}
{\allowdisplaybreaks
\begin{eqnarray}
& & \hspace{-5mm}
{\rm ch}_{H(\Lambda^{[K(m), 0]})}^{(+)}(\tau,z)
\nonumber
\\[2mm]
& & \hspace{-10mm}
= \,\ -i \, q^{-\frac{m}{16}}
\frac{\eta(2\tau)^2}{\eta(\frac{\tau}{2})} \cdot
\frac{\vartheta_{00}(\tau, z)}{\vartheta_{11}(\tau, z)} \cdot
\frac{1}{\theta_{0, m+1}\Big(\tau, \, - \, \dfrac12+\dfrac{m\tau}{2(m+1)}\Big)}
\nonumber
\\[2mm]
& & \hspace{5mm}
\times \, 
\Bigg\{ \, 
\frac{\displaystyle 
\theta_{0, m+1}\Big(\tau, \, \frac{\tau}{2(m+1)}+z\Big)
}{\vartheta_{10}(2\tau, \, z+\frac{\tau}{2})} 
\, - \, 
\frac{\displaystyle 
\theta_{0, m+1}\Big(\tau, \, \frac{\tau}{2(m+1)}-z\Big)
}{\vartheta_{10}(2\tau, \, z-\frac{\tau}{2})} \Bigg\}
\nonumber
\\[3mm]
& & \hspace{-10mm}
+ \,\ 
\frac{1}{\eta(\frac{\tau}{2})\eta(2\tau)} \cdot
\frac{\vartheta_{00}(\tau, z)}{\vartheta_{11}(\tau, z)} \cdot
\frac{1}{\theta_{0, m+1}\Big(\tau, \, - \, \dfrac12+\dfrac{m\tau}{2(m+1)}\Big)}
\nonumber
\\[2mm]
& &\hspace{-10mm}
\times \, \Bigg\{
\sum_{j=1}^{\infty} \sum_{r=1}^j 
\sum_{\substack{k=1 \\[1mm] k \, : \, {\rm odd}}}^{m-1}
(-1)^j \, q^{(m+1)j^2-\frac{1}{4m}(2mr-k)^2-\frac{m}{16}} 
\nonumber
\\[2mm]
& & \hspace{-10mm}
\times \big[
q^{(j+\frac14) (2mr-k)}e^{\frac{\pi i}{2}(2mr+k)}
+
q^{(j-\frac14) (2mr-k)}e^{\frac{\pi i}{2}(2mr-k)}
\big]
\big[\theta_{k,m}-\theta_{-k,m}](\tau, z)
\nonumber
\\[2mm]
& & \hspace{-10mm}
- \, \sum_{j=1}^{\infty} \sum_{r=0}^{j-1} 
\sum_{\substack{k=1 \\[1mm] k \, : \, {\rm odd}}}^{m-1}
(-1)^j \, q^{(m+1)j^2-\frac{1}{4m}(2mr+k)^2-\frac{m}{16}} 
\nonumber
\\[0mm]
& & \hspace{-10mm}
\times 
\big[
q^{(j+\frac14) (2mr+k)}e^{\frac{\pi i}{2}(2mr-k)}
+ 
q^{(j-\frac14) (2mr+k)}e^{\frac{\pi i}{2}(2mr+k)}
\big]
\big[\theta_{k,m}-\theta_{-k,m}](\tau, z)
\Bigg\}
\nonumber
\\[0mm]
& &
\label{n3-2:eqn:2022-331a1}
\end{eqnarray}}

\vspace{-7mm}

\noindent
and
{\allowdisplaybreaks
\begin{eqnarray}
& & \hspace{-10mm}
{\rm ch}_{H(\Lambda^{[K(m), 0]})}^{(-)}(\tau,z)
\nonumber
\\[2mm]
&=&
-i \, q^{-\frac{m}{16}} \, 
\frac{\eta(\frac{\tau}{2})\eta(2\tau)^3}{\eta(\tau)^3} \cdot
\frac{\vartheta_{01}(\tau,z)}{\vartheta_{11}(\tau,z)} \cdot
\frac{1}{\theta_{0, m+1}
\Big(\tau, \, - \, \dfrac{\tau+1}{2(m+1)}+\dfrac{\tau}{2}\Big)}
\nonumber
\\[2mm]
& &
\times 
\Bigg\{ \, 
\frac{\displaystyle 
\theta_{0, m+1}\Big(\tau, \, 
\frac{\tau+1}{2(m+1)}+ z\Big)
}{\vartheta_{11}(2\tau, z+\frac{\tau}{2})} 
\, + \, 
\frac{\displaystyle 
\theta_{0, m+1}\Big(\tau, \, \frac{\tau+1}{2(m+1)}
- z\Big)
}{\vartheta_{11}(2\tau, z-\frac{\tau}{2})} 
\Bigg\}
\nonumber
\\[2mm]
& & \hspace{-5mm}
+ \, 
\frac{\eta(\frac{\tau}{2})}{\eta(\tau)^3} \cdot
\frac{\vartheta_{01}(\tau,z)}{\vartheta_{11}(\tau,z)} \cdot
\frac{1}{\theta_{0, m+1}
\Big(\tau, \, - \, \dfrac{\tau+1}{2(m+1)}+\dfrac{\tau}{2}\Big)}
\nonumber
\\[2mm]
& &
\times \Bigg\{
\sum_{j=1}^{\infty} \sum_{r=1}^j 
\sum_{\substack{k=1 \\[1mm] k \, : \, {\rm odd}}}^{m-1}
(-1)^j \, q^{(m+1)j^2-\frac{1}{4m}(2mr-k)^2-\frac{m}{16}}
\nonumber
\\[2mm]
& & 
\times \, \big\{q^{(j+\frac14)(2mr-k)}
\, + \, q^{(j-\frac14)(2mr-k)}
\big\}
\big[\theta_{k,m}-\theta_{-k,m}](\tau, z)
\nonumber
\\[2mm]
& & \hspace{-5mm}
- \, \sum_{j=1}^{\infty} \sum_{r=0}^{j-1} 
\sum_{\substack{k=1 \\[1mm] k \, : \, {\rm odd}}}^{m-1}
(-1)^j \, q^{(m+1)j^2-\frac{1}{4m}(2mr+k)^2-\frac{m}{16}}
\nonumber
\\[0mm]
& & 
\times \, \big\{
q^{(j+\frac14)(2mr+k)} \, + \, q^{(j-\frac14)(2mr+k)}
\big\}
\big[\theta_{k,m}-\theta_{-k,m}](\tau, z)
\Bigg\}
\label{n3-2:eqn:2022-331a2}
\end{eqnarray}}
\end{subequations}

Rewriting \eqref{n3-2:eqn:2022-331a1} and \eqref{n3-2:eqn:2022-331a2}
by using the formulas 
\begin{subequations}
{\allowdisplaybreaks
\begin{eqnarray}
& &\left\{
\begin{array}{ccr}
\vartheta_{10}(2\tau, z\pm \frac{\tau}{2}) &=&
q^{-\frac{1}{16}} \, e^{\mp \frac{\pi iz}{2}} \, 
\theta_{\mp \frac12, 1}(\tau, z)
\\[2mm]
\vartheta_{11}(2\tau, z\pm \frac{\tau}{2}) &=&
\mp \, i \, q^{-\frac{1}{16}} \, e^{\mp \frac{\pi iz}{2}} \, 
\theta^{(-)}_{\mp \frac12, 1}(\tau, z) 
\end{array}\right.
\label{n3-2:eqn:2022-331b1}
\\[2mm]
& &\left\{
\begin{array}{lcl}
\theta_{0.m+1}(\tau, -\frac12+\frac{m\tau}{2(m+1)}) &=&
q^{-\frac{m^2}{16(m+1)}} \, e^{\frac{\pi im}{4}} \, 
\theta_{-\frac{m}{2}, m+1}(\tau, \frac12)
\\[2mm]
\theta_{0.m+1}(\tau, -\frac{\tau+1}{2(m+1)}+\frac{\tau}{2}) 
&=&
q^{-\frac{m^2}{16(m+1)}} \, 
\theta_{\frac{m}{2}, m+1}^{(-)}(\tau, 0)
\end{array}\right. \hspace{10mm}
\label{n3-2:eqn:2022-331b2}
\\[2mm]
& &\left\{
\begin{array}{lcl}
\theta_{0.m+1}(\tau, \frac{\tau}{2(m+1)} \pm z) &=&
q^{-\frac{1}{16(m+1)}} \, e^{\mp \frac{\pi iz}{2}} \, 
\theta_{\pm \frac12, m+1}(\tau,z)
\\[2mm]
\theta_{0.m+1}(\tau, \frac{\tau+1}{2(m+1)} \pm z) &=&
q^{-\frac{1}{16(m+1)}} \, e^{\mp \frac{\pi iz}{2}} \, 
\theta_{\pm \frac12, m+1}^{(-)}(\tau,z)
\end{array}\right.
\label{n3-2:eqn:2022-331b3}
\end{eqnarray}}
\end{subequations}
we obtain Proposition \ref{n3-2:prop:2022-320a}.
\end{proof}

\begin{prop} 
\label{n3-2:prop:2022-331b}
For $m \in \nnn_{\rm odd}$, the character and the supercharacter of 
the N=3 module $H(\Lambda^{[K(m), 1]})$ are given by the following 
formulas:

\begin{enumerate}
\item[{\rm 1)}] \quad ${\rm ch}_{H(\Lambda^{[K(m), 1]})}^{(+)}(\tau, z)$

\vspace{-2mm}

{\allowdisplaybreaks
\begin{eqnarray*}
& & \hspace{-10mm}
= \,\ - \, i \,\ 
\frac{\eta(2\tau)^2}{\eta(\frac{\tau}{2})} \cdot 
\frac{\vartheta_{00}(\tau,z)}{\vartheta_{11}(\tau,z)} \cdot 
\frac{1}{\theta_{\frac{m}{2},m+1}(\tau, 0)} 
\\[2mm]
& & \hspace{15mm}
\times \,\ \Bigg\{ 
\frac{\theta_{m+\frac12, m+1}(\tau,z)}{\theta_{-\frac12,1}(\tau,z)}
\,\ - \,\ 
\frac{\theta_{-m-\frac12, m+1}(\tau,z)}{\theta_{\frac12,1}(\tau,z)}
\Bigg\}
\\[3mm]
& & \hspace{-7mm}
- \,\ 
q^{-\frac{m}{16(m+1)}} \, e^{-\frac{\pi im}{2}} \, 
\frac{1}{\eta(\frac{\tau}{2}) \eta(2\tau)} \cdot 
\frac{\vartheta_{00}(\tau,z)}{\vartheta_{11}(\tau,z)} \cdot 
\frac{1}{\theta_{\frac{m}{2},m+1}(\tau, 0)} 
\\[3mm]
& & \hspace{-5mm}
\times \,\ \Bigg\{
\sum_{j=1}^{\infty} \sum_{r=1}^j 
\sum_{\substack{k=0 \\[1mm] k \, : \, {\rm even}}}^{m-1}
(-1)^j q^{(m+1)j^2-\frac{1}{4m}(2m(j-r)+m-k)^2} 
\\[2mm]
& & 
\times \,\ \big\{ 
e^{-\frac{\pi i}{2}(2mr-m+k)} q^{-\frac14(2mr-m+k)}
\, + \, 
e^{\frac{\pi i}{2}(2mr-m+k)} q^{\frac14(2mr-m+k)}
\big\}
\\[2mm]
& &
\times \,\ \big[\theta_{k,m}- \theta_{-k,m}](\tau, z)
\\[3mm]
& & \hspace{-5mm}
+ \,\ 
\sum_{j=1}^{\infty} \sum_{r=0}^{j-1} 
\sum_{\substack{k=0 \\[1mm] k \, : \, {\rm even}}}^{m-1}
(-1)^j q^{(m+1)j^2-\frac{1}{4m}(2m(j-r)-m+k)^2} 
\\[0mm]
& & 
\times \,\ \big\{ 
e^{-\frac{\pi i}{2}(2mr+m-k)} q^{-\frac14(2mr+m-k)}
\, + \, 
e^{\frac{\pi i}{2}(2mr+m-k)} q^{\frac14(2mr+m-k)}
\big\}
\\[2mm]
& &
\times \,\ \big[\theta_{k,m}- \theta_{-k,m}](\tau, z)
\Bigg\}
\\[1mm]
& & \hspace{-5mm}
+ \,\ \frac{1}{\eta(\frac{\tau}{2}) \eta(2\tau)} \cdot 
\frac{\vartheta_{00}(\tau,z)}{\vartheta_{11}(\tau,z)} 
\sum_{k=1}^{\frac{m-1}{2}}
(-1)^k \, q^{-\frac{1}{16m}(4k-m)^2} \, 
\big[\theta_{k,\frac{m}{2}}-\theta_{-k,\frac{m}{2}}\big](2\tau, 2z)
\end{eqnarray*}}

\item[{\rm 2)}] \quad ${\rm ch}_{H(\Lambda^{[K(m), 1]})}^{(-)}(\tau, z)$

\vspace{-3mm}

{\allowdisplaybreaks
\begin{eqnarray*}
& & \hspace{-10mm}
= \,\ 
\frac{\eta(\frac{\tau}{2})\eta(2\tau)^3}{\eta(\tau)^3} \cdot 
\frac{\vartheta_{00}(\tau,z)}{\vartheta_{11}(\tau,z)} \cdot 
\frac{1}{\theta_{\frac{m}{2},m+1}^{(-)}(\tau, 0)} 
\\[1mm]
& & \hspace{15mm}
\times \,\ \Bigg\{ 
\frac{\theta_{m+\frac12, m+1}^{(-)}(\tau,z)
}{\theta_{-\frac12, 1}^{(-)}(\tau,z)}
\,\ - \,\ 
\frac{\theta_{-m-\frac12, m+1}^{(-)}(\tau,z)
}{\theta_{\frac12, 1}^{(-)}(\tau,z)} \Bigg\}
\nonumber
\\[3mm]
& & \hspace{-8mm}
- \,\ 
q^{-\frac{m}{16(m+1)}} \, \frac{\eta(\frac{\tau}{2})}{\eta(\tau)^3} \cdot 
\frac{\vartheta_{00}(\tau,z)}{\vartheta_{11}(\tau,z)} \cdot 
\frac{1}{\theta_{\frac{m}{2},m+1}^{(-)}(\tau, 0)} \, 
\\[3mm]
& & \hspace{-5mm}
\times \,\ \Bigg\{
\sum_{j=1}^{\infty} \sum_{r=1}^j 
\sum_{\substack{k=0 \\[1mm] k \, : \, {\rm even}}}^{m-1}
(-1)^j q^{(m+1)j^2-\frac{1}{4m}(2m(j-r)+m-k)^2} 
\nonumber
\\[2mm]
& & \hspace{5mm}
\times \,\ \big\{ 
q^{-\frac14(2mr-m+k)} \, + \, q^{\frac14(2mr-m+k)}\big\} \, 
\big[\theta_{k,m}- \, \theta_{-k,m}](\tau, z)
\nonumber
\\[3mm]
& & \hspace{-2mm}
+ \,\ 
\sum_{j=1}^{\infty} \sum_{r=0}^{j-1} 
\sum_{\substack{k=0 \\[1mm] k \, : \, {\rm even}}}^{m-1}
(-1)^j q^{(m+1)j^2-\frac{1}{4m}(2m(j-r)-m+k)^2} 
\nonumber
\\[0mm]
& & \hspace{5mm}
\times \,\ \big\{
q^{-\frac14(2mr+m-k)} \, + \, q^{\frac14(2mr+m-k)}\big\} 
\, \big[\theta_{k,m}- \, \theta_{-k,m}](\tau, z)
\Bigg\}
\\[0mm]
& & \hspace{-7mm}
+ \,\ 
\frac{\eta(\frac{\tau}{2})}{\eta(\tau)^3} \cdot 
\frac{\vartheta_{00}(\tau,z)}{\vartheta_{11}(\tau,z)}  \, 
\sum_{k=1}^{\frac{m-1}{2}} \,\ q^{-\frac{1}{16m}(4k-m)^2}
\,\ 
\big[\theta_{k,\frac{m}{2}}-\theta_{-k,\frac{m}{2}}\big](2\tau, 2z)
\end{eqnarray*}}
\end{enumerate}
\end{prop}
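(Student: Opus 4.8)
The plan is to follow the same route as the proof of Proposition \ref{n3-2:prop:2022-320a}, replacing $m_2=0$ by $m_2=1$. By Proposition 4.1 of \cite{W2022} specialized to $m_2=1$, the quantities $\big(\overset{N=3}{R}{}^{(\pm)}{\rm ch}^{(\pm)}_{H(\Lambda^{[K(m),1]})}\big)(\tau,z)$ equal $q^{-\frac{m}{16}}$ times
\[
\Phi^{[\frac{m}{2},1]}\Big(2\tau, z+\tfrac{\tau}{2}-\tfrac12, z-\tfrac{\tau}{2}+\tfrac12, 0\Big)
\quad\text{and}\quad
\Phi^{[\frac{m}{2},1]}\Big(2\tau, z+\tfrac{\tau}{2}, z-\tfrac{\tau}{2}, 0\Big)
\]
respectively, since now the relevant second index is $\frac{m_2+1}{2}=1$. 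The first step is to observe that this index is an \emph{integer}, so that Corollary \ref{n3-2:cor:2022-314a} no longer applies directly; instead, by \eqref{n3-2:eqn:2022-331d} (equivalently the $s=0,\,j=1$ instance of Lemma \ref{n3-2:lemma:2022-312a}, whose correction term vanishes because $\theta_{0,m}-\theta_{0,m}=0$) we have $\Phi^{[\frac{m}{2},1]}=\Phi^{[\frac{m}{2},0]}$. Since $m\in\nnn_{\rm odd}$ forces $\frac{m}{2}\in\frac12\nnn_{\rm odd}$, the function $\Phi^{[\frac{m}{2},0]}$ is exactly the one for which Proposition \ref{n3-2:prop:2022-331a} supplies an explicit formula.

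Next I would apply Proposition \ref{n3-2:prop:2022-331a} with $m$ replaced by $\frac{m}{2}$, evaluating its right-hand side at the two coordinate specializations $z_1=\frac{z}{2}+\frac{\tau}{4}-\frac14,\ z_2=\frac{z}{2}-\frac{\tau}{4}+\frac14$ (character) and $z_1=\frac{z}{2}+\frac{\tau}{4},\ z_2=\frac{z}{2}-\frac{\tau}{4}$ (supercharacter), so that $2z_1,2z_2$ land on the arguments above and $2z_1+2z_2=2z$. At these points $z_1-z_2$ equals $\frac{\tau}{2}-\frac12$ and $\frac{\tau}{2}$ respectively. Feeding $z_1-z_2$ into the factor $e^{2\pi i k(z_1-z_2)}$ of the final correction sum of Proposition \ref{n3-2:prop:2022-331a} (the $\sum_{k=1}^{\frac{m}{2}-\frac12}=\sum_{k=1}^{\frac{m-1}{2}}$ term with theta index $\frac{m}{2}$) produces the $(-1)^k$ present in the character formula and its absence in the supercharacter formula, while $q^{k/2-k^2/m}$ combines into $q^{-\frac{1}{16m}(4k-m)^2+\frac{m}{16}}$, the extra $q^{m/16}$ cancelling the global $q^{-m/16}$ of Proposition 4.1. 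This already accounts for the last line of each of the two claimed formulas.

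It then remains to transport the two theta-quotient (Appell-type) terms and the two double sums of Proposition \ref{n3-2:prop:2022-331a} into the stated shape. The prefactor $q^{-\frac{m}{4(m+1)}}e^{\frac{\pi i m}{m+1}(z_1-z_2)}$ and the $\theta^{(-)}_{-m,m+1}$-type factor on its left must be pushed through the quasi-periodicity and half-period transformation identities for $\theta^{(-)}_{k,m+1}$, $\vartheta_{ab}$ and $\eta$ of the same kind already used inside the proof of Proposition \ref{n3-2:prop:2022-331a}, and matched against the normalizations \eqref{n3-2:eqn:2022-331b1}--\eqref{n3-2:eqn:2022-331b3}, after which one divides both sides by $\overset{N=3}{R}{}^{(\pm)}(\tau,z)$ and by the resulting $\theta$-prefactor exactly as at the end of the proof of Proposition \ref{n3-2:prop:2022-320a}. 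I expect the main obstacle to be precisely this index-and-parity bookkeeping: correctly converting the shifted $\theta^{(-)}_{-m,m+1}$ prefactor into the normalizing $\theta_{\frac{m}{2},m+1}(\tau,0)$ in the character case and $\theta^{(-)}_{\frac{m}{2},m+1}(\tau,0)$ in the supercharacter case, and checking that the summation ranges over $r$ and the even residues $k$ line up with the exponents $q^{(m+1)j^2-\frac{1}{4m}(2m(j-r)\pm m\mp k)^2}$ appearing in the target formulas. Everything else is routine theta-function rewriting.
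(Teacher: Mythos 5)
Your proposal is correct and follows essentially the same route as the paper's own proof: reduce $\Phi^{[\frac{m}{2},\frac{m_2+1}{2}]}=\Phi^{[\frac{m}{2},1]}$ to $\Phi^{[\frac{m}{2},0]}$ via \eqref{n3-2:eqn:2022-331d}, invoke Proposition \ref{n3-2:prop:2022-331a} (valid since $\tfrac{m}{2}\in\tfrac12\nnn_{\rm odd}$) at the two coordinate specializations, and then convert the $\theta^{(-)}_{\mp m,m+1}$ prefactors into $\theta_{\frac{m}{2},m+1}(\tau,0)$ resp.\ $\theta^{(-)}_{\frac{m}{2},m+1}(\tau,0)$ before dividing by $\overset{N=3}{R}{}^{(\pm)}$. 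Your treatment of the final correction sum, including the $(-1)^k$ from $e^{2\pi ik(\frac{\tau}{2}-\frac12)}$ and the cancellation of $q^{\pm m/16}$, matches the paper's computation exactly.
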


\begin{proof} Letting $m_2=1$ in \eqref{n3-2:eqn:2022-321b1} and 
using \eqref{n3-2:eqn:2022-331d}, we have
{\allowdisplaybreaks
\begin{eqnarray}
\big(\overset{N=3}{R}{}^{(+)}{\rm ch}_{H(\Lambda^{[K(m), 1]})}^{(+)}\big)
(\tau,z) \hspace{-2mm} &=& \hspace{-2mm}
q^{-\frac{m}{16}} \, 
\Phi^{[\frac{m}{2}, 0]}
\Big(2\tau, z+\frac{\tau}{2}-\frac12, z-\frac{\tau}{2}+\frac12, 0\Big)
\nonumber
\\[2mm]
\big(\overset{N=3}{R}{}^{(-)}{\rm ch}_{H(\Lambda^{[K(m), 1]})}^{(-)}\big)
(\tau,z) \hspace{-2mm} &=& \hspace{-2mm}
q^{-\frac{m}{16}} \, 
\Phi^{[\frac{m}{2}, 0]}
\Big(2\tau, z+\frac{\tau}{2}, z-\frac{\tau}{2}, 0\Big) \, .
\label{n3-2:eqn:2022-401a}
\end{eqnarray}}

\vspace{-2mm}

\noindent
The functions $\Phi^{[\frac{m}{2}, 0]}
(2\tau, z+\frac{\tau}{2}-\frac12, z-\frac{\tau}{2}+\frac12, 0)$
and 
$\Phi^{[\frac{m}{2}, 0]}
(2\tau, z+\frac{\tau}{2}, z-\frac{\tau}{2}, 0)$ 
in the RHS of \eqref{n3-2:eqn:2022-401a}
are obtained from Proposition \ref{n3-2:prop:2022-331a} by 
replacing $m$ with $\frac{m}{2}$ and putting $\left\{
\begin{array}{ccc}
z_1 &=&\frac{z}{2}+\frac{\tau}{4}-\frac14 \\[1mm]
z_2 &=&\frac{z}{2}-\frac{\tau}{4}+\frac14
\end{array}\right. $ and $\left\{
\begin{array}{ccc}
z_1 &=&\frac{z}{2}+\frac{\tau}{4} \\[1mm]
z_2 &=&\frac{z}{2}-\frac{\tau}{4}
\end{array}\right. $ respectively, and are calculated 
by using \eqref{n3-2:eqn:2022-331b1} and the formulas
{\allowdisplaybreaks
\begin{eqnarray*}
& &\left\{
\begin{array}{lcl}
\theta_{-m,m+1}^{(-)}(\tau, \frac{m(\tau-1)}{2(m+1)}) &=&
q^{-\frac{m^2}{16(m+1)}} e^{\frac{\pi im^2}{2(m+1)}} 
\theta_{\frac{m}{2}, m+1}(\tau,0) \quad (m \in \nnn_{\rm odd})
\\[2mm]
\theta_{-m,m+1}^{(-)}(\tau, \frac{m\tau}{2(m+1)}) &=&
q^{-\frac{m^2}{16(m+1)}} \, 
\theta_{\frac{m}{2}, m+1}^{(-)}(\tau,0)
\end{array}\right.
\\[2mm]
& & \hspace{3mm}
\theta_{m,m+1}^{(-)}(\tau, \frac{\tau-1}{2(m+1)}\pm z) 
\, = \, q^{-\frac{1}{16(m+1)}} \, 
e^{-\frac{\pi im}{2(m+1)}} \, 
e^{\mp \frac{\pi iz}{2}} \, 
\theta_{\pm (m+\frac12),m+1}(\tau, z) \hspace{10mm}
\\[2mm]
& & \hspace{3mm}
\theta_{m,m+1}^{(-)}(\tau, \frac{\tau}{2(m+1)}\pm z) 
\, = \, q^{-\frac{1}{16(m+1)}} \, 
e^{\mp \frac{\pi iz}{2}} \, 
\theta_{\pm (m+\frac12),m+1}^{(-)}(\tau, z)
\end{eqnarray*}}
as follows:  %
\begin{subequations}
{\allowdisplaybreaks
\begin{eqnarray}
& & \hspace{-5mm}
\theta_{\frac{m}{2},m+1}(\tau, 0) \, 
\Phi^{[\frac{m}{2}, 0]}\Big(2\tau, \,\ z+ \dfrac{\tau}{2}-\dfrac12, \,\ 
z- \dfrac{\tau}{2}+\dfrac12, \,\ \dfrac{\tau}{8}\Big)
\nonumber
\\[2mm]
& & \hspace{-10mm}
= \,\ - \, i \, 
\eta(2\tau)^3 \Bigg\{ 
\frac{\theta_{m+\frac12, m+1}(\tau,z)}{\theta_{-\frac12,1}(\tau,z)}
\,\ - \,\ 
\frac{\theta_{-m-\frac12, m+1}(\tau,z)}{\theta_{\frac12,1}(\tau,z)}
\Bigg\}
\nonumber
\\[2mm]
& & \hspace{-5mm}
- \,\ 
q^{-\frac{m}{16(m+1)}} \, e^{-\frac{\pi im}{2}} \, 
\sum_{j=1}^{\infty} \sum_{r=1}^j 
\sum_{\substack{k=0 \\[1mm] k \, : \, {\rm even}}}^{m-1}
(-1)^j q^{(m+1)j^2-\frac{1}{4m}(2m(j-r)+m-k)^2} 
\nonumber
\\[2mm]
& & 
\times \, \big\{ 
e^{-\frac{\pi i}{2}(2mr-m+k)} q^{-\frac14(2mr-m+k)}
\, + \, 
e^{\frac{\pi i}{2}(2mr-m+k)} q^{\frac14(2mr-m+k)}
\big\}
\nonumber
\\[2mm]
& & 
\times \,\ \big[\theta_{k,m}- \theta_{-k,m}](\tau, z)
\nonumber
\\[2mm]
& & \hspace{-5mm}
+ \,\ 
q^{-\frac{m}{16(m+1)}} \, e^{-\frac{\pi im}{2}} \, 
\sum_{j=1}^{\infty} \sum_{r=0}^{j-1} 
\sum_{\substack{k=0 \\[1mm] k \, : \, {\rm even}}}^{m-1}
(-1)^j q^{(m+1)j^2-\frac{1}{4m}(2m(j-r)-m+k)^2} 
\nonumber
\\[2mm]
& & 
\times \, \big\{ 
e^{-\frac{\pi i}{2}(2mr+m-k)} q^{-\frac14(2mr+m-k)}
\, + \, 
e^{\frac{\pi i}{2}(2mr+m-k)} q^{\frac14(2mr+m-k)}
\big\}
\nonumber
\\[2mm]
& &
\times \,\ \big[\theta_{k,m}- \theta_{-k,m}](\tau, z)
\nonumber
\\[2mm]
& & \hspace{-5mm}
+ \,\ 
\theta_{\frac{m}{2},m+1}(\tau, 0) \, 
\sum_{k=1}^{\frac{m-1}{2}}
(-1)^k \, q^{-\frac{1}{16m}(4k-m)^2} \, 
\big[\theta_{k,\frac{m}{2}}-\theta_{-k,\frac{m}{2}}\big](2\tau, 2z)
\label{n3-2:eqn:2022-401b1}
\end{eqnarray}}
and  %
{\allowdisplaybreaks
\begin{eqnarray}
& & \hspace{-5mm}
\theta_{\frac{m}{2},m+1}^{(-)}(\tau, 0) \, 
\Phi^{[\frac{m}{2}, 0]}\Big(2\tau, \,\ z+\dfrac{\tau}{2}, \,\ 
z-\dfrac{\tau}{2}, \,\ \dfrac{\tau}{8}\Big)
\nonumber
\\[2mm]
& & \hspace{-10mm}
= \,\ 
\eta(2\tau)^3 \, \Bigg\{ 
\frac{\theta_{m+\frac12, m+1}^{(-)}(\tau,z)
}{\theta_{-\frac12, 1}^{(-)}(\tau,z)}
\,\ - \,\ 
\frac{\theta_{-m-\frac12, m+1}^{(-)}(\tau,z)
}{\theta_{\frac12, 1}^{(-)}(\tau,z)} \Bigg\} 
\nonumber
\\[3mm]
& & \hspace{-5mm}
- \,\ 
q^{\frac{-m}{16(m+1)}} \, 
\sum_{j=1}^{\infty} \sum_{r=1}^j 
\sum_{\substack{k=0 \\[1mm] k \, : \, {\rm even}}}^{m-1}
(-1)^j q^{(m+1)j^2-\frac{1}{4m}(2m(j-r)+m-k)^2} 
\nonumber
\\[2mm]
& & 
\times \,\ \big\{ 
q^{-\frac14(2mr-m+k)} \, + \, q^{\frac14(2mr-m+k)}\big\} \, 
\big[\theta_{k,m}- \, \theta_{-k,m}](\tau, z)
\nonumber
\\[2mm]
& & \hspace{-5mm}
+ \,\ 
q^{-\frac{m}{16(m+1)}} \, 
\sum_{j=1}^{\infty} \sum_{r=0}^{j-1} 
\sum_{\substack{k=0 \\[1mm] k \, : \, {\rm even}}}^{m-1}
(-1)^j q^{(m+1)j^2-\frac{1}{4m}(2m(j-r)-m+k)^2} 
\nonumber
\\[2mm]
& & 
\times \,\ \big\{
q^{-\frac14(2mr+m-k)} \, + \, q^{\frac14(2mr+m-k)}\big\} 
\, \big[\theta_{k,m}- \, \theta_{-k,m}](\tau, z)
\nonumber
\\[3mm]
& & \hspace{-5mm}
+ \,\ 
\theta_{\frac{m}{2},m+1}^{(-)}(\tau, 0) \, 
\sum_{k=1}^{\frac{m-1}{2}} \,\ q^{-\frac{1}{16m}(4k-m)^2}
\,\ 
\big[\theta_{k,\frac{m}{2}}-\theta_{-k,\frac{m}{2}}\big](2\tau, 2z)
\label{n3-2:eqn:2022-401b2}
\end{eqnarray}}
\end{subequations}
Substituting \eqref{n3-2:eqn:2022-401b1} and \eqref{n3-2:eqn:2022-401b2}
into \eqref{n3-2:eqn:2022-401a} and using the formula
\eqref{n3-2:eqn:2022-321b2} for $\overset{N=3}{R}{}^{(\pm)}(\tau,z)$, 
we obtain Proposition \ref{n3-2:prop:2022-331b}.
\end{proof}

We note that the character and the supercharacter of the N=3 module 
$H(\Lambda^{[K(m), m_2]})$ for all $m_2$ are obtained from those of 
$H(\Lambda^{[K(m), 0]})$ and $H(\Lambda^{[K(m), 1]})$
by the following:

\begin{prop}
\label{n3-2:prop:2022-320b}
Let $m \in \nnn$ and $m_2 \in \zzz_{\geq 0}$ such that 
$m_2 \leq m$. Then 
\begin{enumerate}
\item[{\rm 1)}] \,\ if \, $m_2 \in \zzz_{\rm even}$, 
\begin{enumerate}
\item[{\rm (i)}] ${\rm ch}_{H(\Lambda^{[K(m), m_2]})}^{(+)}(\tau,z) 
-
{\rm ch}_{H(\Lambda^{[K(m), 0]})}^{(+)}(\tau,z) 
\,\ = \,\ 
- \, i q^{-\frac{m}{16}}
\dfrac{1}{\eta(\frac{\tau}{2})\eta(2\tau)}$

$$ \hspace{-5mm}
\times \, 
\frac{\vartheta_{00}(\tau,z)}{\vartheta_{11}(\tau,z)}
\sum_{k=0}^{\frac{m_2}{2}-1}
(-1)^k \, q^{-\frac{1}{m}(k+\frac12)^2 + \frac12 (k+\frac12)} \, 
\big[\theta_{k+\frac12, \frac{m}{2}}-\theta_{-(k+\frac12), \frac{m}{2}}\big]
(2\tau, 2z)
$$

\item[{\rm (ii)}] ${\rm ch}_{H(\Lambda^{[K(m), m_2]})}^{(-)}(\tau,z) 
-
{\rm ch}_{H(\Lambda^{[K(m), 0]})}^{(-)}(\tau,z) 
\,\ = \,\ 
q^{-\frac{m}{16}}
\dfrac{\eta(\frac{\tau}{2})}{\eta(\tau)^3} \cdot 
\dfrac{\vartheta_{01}(\tau,z)}{\vartheta_{11}(\tau,z)}$
$$
\times \sum_{k=0}^{\frac{m_2}{2}-1}
q^{-\frac{1}{m}(k+\frac12)^2 + \frac12 (k+\frac12)} \, 
\big[\theta_{k+\frac12, \frac{m}{2}}-\theta_{-(k+\frac12), \frac{m}{2}}\big]
(2\tau, 2z)
$$
\end{enumerate}

\item[{\rm 2)}] \,\ if \, $m_2 \in \zzz_{\rm odd}$, 
\begin{enumerate}
\item[{\rm (i)}] ${\rm ch}_{H(\Lambda^{[K(m), m_2]})}^{(+)}(\tau,z) 
-
{\rm ch}_{H(\Lambda^{[K(m), 1]})}^{(+)}(\tau,z) 
\,\ = \,\ 
- \, i q^{-\frac{m}{16}}
\dfrac{1}{\eta(\frac{\tau}{2})\eta(2\tau)}$
$$ \hspace{-5mm}
\times \dfrac{\vartheta_{00}(\tau,z)}{\vartheta_{11}(\tau,z)}
\sum_{k=0}^{\frac{m_2-1}{2}}
(-1)^k \, q^{-\frac{k^2}{m} + \frac{k}{2}} \, 
\big[\theta_{k, \frac{m}{2}}-\theta_{-k, \frac{m}{2}}\big]
(2\tau, 2z)
$$

\item[{\rm (ii)}] ${\rm ch}_{H(\Lambda^{[K(m), m_2]})}^{(-)}(\tau,z) 
-
{\rm ch}_{H(\Lambda^{[K(m), 1]})}^{(-)}(\tau,z) 
\,\ = \,\ 
q^{-\frac{m}{16}}
\dfrac{\eta(\frac{\tau}{2})}{\eta(\tau)^3} \cdot 
\dfrac{\vartheta_{01}(\tau,z)}{\vartheta_{11}(\tau,z)}$
$$
\times \sum_{k=0}^{\frac{m_2-1}{2}}
q^{-\frac{k^2}{m} + \frac{k}{2}} \, 
\big[\theta_{k, \frac{m}{2}}-\theta_{-k, \frac{m}{2}}\big]
(2\tau, 2z)
$$
\end{enumerate}
\end{enumerate}
\end{prop}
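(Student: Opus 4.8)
The plan is to reduce everything to the character formula \eqref{n3-2:eqn:2022-321b1} of \cite{W2022} together with the difference formula of Lemma \ref{n3-2:lemma:2022-320b}. Recall from \eqref{n3-2:eqn:2022-321b1} that
$$\big(\overset{N=3}{R}{}^{(+)}{\rm ch}_{H(\Lambda^{[K(m),m_2]})}^{(+)}\big)(\tau,z) = q^{-\frac{m}{16}}\,\Phi^{[\frac{m}{2},\frac{m_2+1}{2}]}\big(2\tau,\, z+\tfrac{\tau}{2}-\tfrac12,\, z-\tfrac{\tau}{2}+\tfrac12,\, 0\big),$$
and similarly $\big(\overset{N=3}{R}{}^{(-)}{\rm ch}^{(-)}\big)$ equals $q^{-\frac{m}{16}}\Phi^{[\frac{m}{2},\frac{m_2+1}{2}]}$ evaluated at the unshifted arguments $(2\tau,\, z+\tfrac{\tau}{2},\, z-\tfrac{\tau}{2},\, 0)$. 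The entire $m_2$-dependence therefore sits in the first index $s=\frac{m_2+1}{2}$ of $\Phi$, so that after multiplying by $\overset{N=3}{R}{}^{(\pm)}$ the left-hand differences in the proposition become simply $q^{-m/16}$ times a difference $\Phi^{[\frac{m}{2},s]}-\Phi^{[\frac{m}{2},s_0]}$ at those arguments. I note at the outset the correspondence: the shifted arguments feed the $(+)$-character and hence invoke case (ii) of Lemma \ref{n3-2:lemma:2022-320b} (carrying the $-i(-1)^k$), while the unshifted arguments feed the $(-)$-supercharacter and invoke case (i).

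The parity of $m_2$ governs the split. If $m_2$ is even then $s=\frac{m_2+1}{2}\in\frac12\nnn_{\rm odd}$, and the reference module $H(\Lambda^{[K(m),0]})$ has index $s_0=\frac12$; if $m_2$ is odd then $s=\frac{m_2+1}{2}\in\nnn$, and the reference module $H(\Lambda^{[K(m),1]})$ has index $s_0=1$. In the even case I would apply Lemma \ref{n3-2:lemma:2022-320b}~1) directly, using $s-\frac32=\frac{m_2}{2}-1$ so that the range $\sum_{k=0}^{s-3/2}$ becomes exactly $\sum_{k=0}^{m_2/2-1}$; part (ii) then yields the $(+)$-statement 1)(i) and part (i) the $(-)$-statement 1)(ii).

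In the odd case the one subtlety is that Lemma \ref{n3-2:lemma:2022-320b}~2) compares against $\Phi^{[\frac{m}{2},0]}$ rather than against the required $\Phi^{[\frac{m}{2},1]}$. This is harmless because of \eqref{n3-2:eqn:2022-331d}, which gives $\Phi^{[\frac{m}{2},1]}=\Phi^{[\frac{m}{2},0]}$ identically; hence the reference value $m_2=1$ may be replaced by $s_0=0$ at no cost, and Lemma \ref{n3-2:lemma:2022-320b}~2) applies with $s-1=\frac{m_2-1}{2}$, giving the range $\sum_{k=0}^{(m_2-1)/2}$ (whose $k=0$ term vanishes automatically since $\theta_{0,m/2}-\theta_{-0,m/2}=0$). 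Finally I would divide each resulting identity by $\overset{N=3}{R}{}^{(\pm)}(\tau,z)$ as given in \eqref{n3-2:eqn:2022-321b2}, moving the prefactors $\eta(\tfrac{\tau}{2})\eta(2\tau)\,\vartheta_{11}/\vartheta_{00}$ and $\eta(\tau)^3\eta(\tfrac{\tau}{2})^{-1}\vartheta_{11}/\vartheta_{01}$ across to convert the $\Phi$-differences into the stated character differences. There is no genuine analytic obstacle, since the substantive content is already packaged in Lemma \ref{n3-2:lemma:2022-320b}; the only points demanding care are the parity bookkeeping of the summation bounds, the identification $\Phi^{[\frac{m}{2},1]}=\Phi^{[\frac{m}{2},0]}$ in the odd case, and the correct matching of shifted versus unshifted arguments to the $(+)$ versus $(-)$ cases.
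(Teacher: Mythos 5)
Your proposal is correct and follows essentially the same route as the paper: express $\overset{N=3}{R}{}^{(\pm)}\,{\rm ch}^{(\pm)}$ via Proposition 4.1 of \cite{W2022} (formulas \eqref{n3-2:eqn:2022-321b1}--\eqref{n3-2:eqn:2022-321b2}), compute the resulting differences of $\Phi^{[\frac{m}{2},\frac{m_2+1}{2}]}$ with Lemma \ref{n3-2:lemma:2022-320b}, and divide by $\overset{N=3}{R}{}^{(\pm)}$. Your bookkeeping (shifted arguments $\leftrightarrow$ $(+)$ and part (ii), unshifted $\leftrightarrow$ $(-)$ and part (i), and the use of \eqref{n3-2:eqn:2022-331d} to replace the reference $\Phi^{[\frac{m}{2},1]}$ by $\Phi^{[\frac{m}{2},0]}$ in the odd case) matches the paper, and in fact spells out the odd case that the paper dismisses as ``quite similar.''
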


\begin{proof} By Proposition 4.1 in \cite{W2022}, character and 
supercharacter of the N=3 module $H(\Lambda^{[K(m), m_2]})$ are given by
the formulas \eqref{n3-2:eqn:2022-321b1} and \eqref{n3-2:eqn:2022-321b2}.
Then, in the case $m_2 \in 2\zzz$, we have by Lemma \ref{n3-2:lemma:2022-320b} 
the following:
{\allowdisplaybreaks
\begin{eqnarray*}
& & \hspace{-10mm}
\big(\overset{N=3}{R}{}^{(+)}{\rm ch}_{H(\Lambda^{[K(m), m_2]})}^{(+)}\big)
(\tau,z) 
-
\big(\overset{N=3}{R}{}^{(+)}{\rm ch}_{H(\Lambda^{[K(m), 0]})}^{(+)}\big)
(\tau,z) 
\\[2mm]
&=&
\Phi^{[\frac{m}{2}, \frac{m_2+1}{2}]}
\Big(2\tau, z+\frac{\tau}{2}-\frac12, z-\frac{\tau}{2}+\frac12, \frac{\tau}{8}\Big)
\\[2mm]
& &
- \,\
\Phi^{[\frac{m}{2}, \frac12]}
\Big(2\tau, z+\frac{\tau}{2}-\frac12, z-\frac{\tau}{2}+\frac12, \frac{\tau}{8}\Big)
\\[2mm]
&=&
- \, i q^{-\frac{m}{16}}\sum_{k=0}^{\frac{m_2}{2}-1}
(-1)^k \, q^{-\frac{1}{m}(k+\frac12)^2 + \frac12 (k+\frac12)} \, 
\big[\theta_{k+\frac12, \frac{m}{2}}-\theta_{-(k+\frac12), \frac{m}{2}}\big]
(2\tau, 2z)
\\[2mm]
& & \hspace{-10mm}
\big(\overset{N=3}{R}{}^{(-)}{\rm ch}_{H(\Lambda^{[K(m), m_2]})}^{(-)}\big)
(\tau,z) 
-
\big(\overset{N=3}{R}{}^{(-)}{\rm ch}_{H(\Lambda^{[K(m), 0]})}^{(-)}\big)
(\tau,z) 
\\[2mm]
&=&
\Phi^{[\frac{m}{2}, \frac{m_2+1}{2}]}
\Big(2\tau, z+\frac{\tau}{2}, z-\frac{\tau}{2}, \frac{\tau}{8}\Big)
- 
\Phi^{[\frac{m}{2}, \frac12]}
\Big(2\tau, z+\frac{\tau}{2}, z-\frac{\tau}{2}, \frac{\tau}{8}\Big)
\\[2mm]
&=&
q^{-\frac{m}{16}}\sum_{k=0}^{\frac{m_2}{2}-1}
q^{-\frac{1}{m}(k+\frac12)^2 + \frac12 (k+\frac12)} \, 
\big[\theta_{k+\frac12, \frac{m}{2}}-\theta_{-(k+\frac12), \frac{m}{2}}\big]
(2\tau, 2z)
\end{eqnarray*}}
so
{\allowdisplaybreaks
\begin{eqnarray*}
& & \hspace{-10mm}
{\rm ch}_{H(\Lambda^{[K(m), m_2]})}^{(+)}(\tau,z) 
-
{\rm ch}_{H(\Lambda^{[K(m), 0]})}^{(+)}(\tau,z) 
\, = \, - \, i q^{-\frac{m}{16}}
\frac{1}{\eta(\frac{\tau}{2})\eta(2\tau)} \cdot 
\frac{\vartheta_{00}(\tau,z)}{\vartheta_{11}(\tau,z)}
\\[2mm]
& &
\times \sum_{k=0}^{\frac{m_2}{2}-1}
(-1)^k \, q^{-\frac{1}{m}(k+\frac12)^2 + \frac12 (k+\frac12)} \, 
\big[\theta_{k+\frac12, \frac{m}{2}}-\theta_{-(k+\frac12), \frac{m}{2}}\big]
(2\tau, 2z)
\\[2mm]
& & \hspace{-10mm}
{\rm ch}_{H(\Lambda^{[K(m), m_2]})}^{(-)}(\tau,z) 
-
{\rm ch}_{H(\Lambda^{[K(m), 0]})}^{(-)}(\tau,z) 
\, = \, q^{-\frac{m}{16}}
\frac{\eta(\frac{\tau}{2})}{\eta(\tau)^3} \cdot 
\frac{\vartheta_{01}(\tau,z)}{\vartheta_{11}(\tau,z)}
\\[2mm]
& &
\times \sum_{k=0}^{\frac{m_2}{2}-1}
q^{-\frac{1}{m}(k+\frac12)^2 + \frac12 (k+\frac12)} \, 
\big[\theta_{k+\frac12, \frac{m}{2}}-\theta_{-(k+\frac12), \frac{m}{2}}\big]
(2\tau, 2z)
\end{eqnarray*}}
proving 1). \, Proof of 2) is quite similar.
\end{proof}

\end{document}